\documentclass[reqno]{amsproc}
\usepackage[utf8]{inputenc}

\usepackage{pst-all}
\usepackage{amsthm}
\usepackage{mathtools}
\usepackage{diagbox}
\usepackage{dsfont}
\usepackage{multirow}

\usepackage{amsfonts}
\usepackage{graphicx}
\usepackage{amscd}
\usepackage{amsmath}
\usepackage{amssymb}
\usepackage{latexsym}
\usepackage{amsmath,amssymb,latexsym}
\usepackage{hyperref}
\usepackage[all]{xy}
\usepackage{color}
\usepackage{mathrsfs}
\setcounter{MaxMatrixCols}{30}
\theoremstyle{plain}

\newtheorem{corol}{\bf Corollary}

\newtheorem{example}{\bf Example}
\newtheorem{conj}{\bf Conjecture}
\newtheorem{observation}{\bf Observation}

\newtheorem{prop}{\bf Proposition}

\newtheorem{teor}{\bf Theorem}
\numberwithin{equation}{section}

\begin{document}
\title[Symmetries of Ricci Flows]{Symmetries of Ricci Flows}
\author[Enrique López]{Enrique  López$^1$}
\author[Stylianos Dimas]{Stylianos Dimas$^2$}
\author[Yuri Bozhkov]{Yuri Bozhkov$^3$}
\address{$^1$Institute of Mathematics, Statistics and Scientific Computing, University of Campinas - UNICAMP.}
\address{$^{2}$Department of Mathematics, Technological Institute of Aeronautics - ITA.}
\address{$^3$Institute of Mathematics, Statistics and Scientific Computing, University of Campinas - UNICAMP.}
\email{$^1$elopez@ime.unicamp.br}
\email{$^2$stylianos.dimas.ita@gmail.com}
\email{$^3$bozhkov@unicamp.br}
%
\urladdr{$^1$https://www.ime.unicamp.br}
\urladdr{$^2$https://www.ita.br}
\urladdr{$^3$https://www.ime.unicamp.br}
\keywords{Ricci flow; Invariant solutions; Optimal system}
\subjclass[2020]{Primary 35B06, 68W30, 53E20}

\begin{abstract}
In the present work, we find the Lie point symmetries of  the Ricci flow on an $n$-dimensional manifold, and we introduce a method in order to reutilize these symmetries to obtain the Lie point symmetries of particular metrics. We apply this method to retrieve the Lie point symmetries of the Einstein equations --- seen as a ``static'' Ricci flow ---, and of some particular types of metrics of interest, such as,  on warped products of manifolds. Finally, we use the symmetries found to obtain  invariant solutions of the Ricci flow for the particular families of metrics considered.
\end{abstract}

\maketitle
\section{Introduction}
Hamilton, in 1982~\cite{hamilton1982three}, introduced the Ricci flow: A nonlinear geometric evolution equation in which one starting with a smooth Riemannian manifold $\left(M^{n}, g_{0}\right)$  deforms its metric using the following equation
\begin{equation*}\label{equa01}
\frac{\partial}{\partial t} g=-2 Ric_{g(t)},
\end{equation*}
where $Ric_{g(t)}$ denotes the Ricci tensor of the metric $g(t)$ and $g(0)=g_0$. Similar to the heat flow of temperature distributions and other diffusion processes, the Ricci flow deforms the geometry towards more uniform ones whose limit allows us to draw topological conclusions about $M^n$. For this reason, the Ricci flow has been studied and successfully applied to solve important manifold classification problems, such as the famous Poincaré's Conjecture and the Thurston's geometrization conjecture. Perelman completely proved these two conjectures in 2003~\cite{perelman2003finite}.  There is a huge number of works dedicated to the study of various aspects of the Ricci flow, amongst them, the study of the Black Holes Theory, see Xing and Xiang~\cite{xing2017mathematical}, and Bakas~\cite{bakas2004ricci}. 
\ 

It is well known that in contrast to the theory of linear differential equations --- although the great progress in the past seven decades ---   there is still no unitary theory for nonlinear differential equations and systems. Nevertheless, various  effective approaches have been developed: Compact operators and operators of monotone type, topological degree theories, fixed point theory and modern variational principles, critical point theory, Morse theory and critical groups, treated from a general qualitative point of view \cite{kr,prr}; Rellich and Pohozaev Identities via a Noeterian approach \cite{ye1,ye2,ye3}; analytical methods for constructing exact solutions of differential equations \cite{mel2}, etc.

\ 

We believe that the symmetry approach based on the
 application of Lie symmetry methods for differential equations may be 
 fruitful in the Nonlinear Analysis.  We note that Sophus Lie originally introduced in the last part of the $19$th century the general theory of Lie transformation groups specifically to study differential equations.  After a period of dormancy, Lie's ideas resurfaced, beginning with the pioneering works of L. Ovsyannikov and his collaborators. It is no exaggeration  to state that the great array of applications of Lie group methods to nonlinear differential equations has been one of the great successes of the last 60 years. There is by now a vast literature in this area, and many of the basic methods and applications can be found in \cite{olver2000applications,ole,ovsiannikov2014group} as well as the three volume handbook \cite{ibr4}.  Some of the directions being developed include the construction of explicit symmetric solutions through symmetry reduction, the connection between symmetries and conservation laws via the Noether's Theorem for problems admitting variational structures, construction and classification of differential invariants and, then, invariant differential equations and variational principles, separation of variables and integrability of both linear and nonlinear partial differential equations, asymptotic behavior of solutions for long times and near blow-up using symmetric solutions, and the design of symmetry-preserving numerical algorithms, just to mention a few.
 
\ 
 
We recall that the symmetries of a differential equation transform
solutions of the equation to other solutions. One of the main
benefits of this theory is that by following a completely algorithmic procedure, one is able to determine the symmetries of a
differential equation or systems of differential equations. They comprise a structural property of the equation --  in essence they are equation's DNA. The knowledge of the symmetries of an equation enables one to use them
for a variety of purposes, from obtaining analytical solutions and reducing its order to finding of integrating factors and conservation
laws as we mentioned above. In fact, many, if not all, of the different empirical methods for solving ordinary differential equations (ODE) we have learned from standard courses at the undergraduate level emerge from a symmetry. For instance, having at our
disposal a Lie point symmetry of a first order ODE, we can immediately get explicitly an integrating factor by a formula obtained by S. Lie. In this regard, we remind the words of Nail Ibragimov that 
\begin{quote}``one of the most remarkable achievements of Lie was the discovery that the
majority of known methods of integration of ordinary differential equations,
which seemed up to that time artificial and internally disconnected, could be
derived in a unified manner using the theory of groups. Moreover, Lie provided a classification of all ordinary differential equations
of arbitrary order in terms of the symmetry groups they admit and thus
described the whole collection of equations for which integration or lowering
of the order could be effected by group-theoretical methods. However, these
and other very valuable results he obtained could not for a long time be
widely disseminated and remained known to only a few. It could be said that
this is the state of affairs today with methods of solution of problems of
mathematical physics: Many of these are of a group-theoretical nature, but
are presented as a result of a lucky guess'', \cite{i2}.\end{quote}

\ 

All in all, symmetries play a far-reaching role in the analysis of differential equations: They allow us to recover the full picture from partial information. For this reason, it is not an uncommon practice to employ symmetries for constructing new solutions from known ones or for reducing a differential equation. At this point, is worth mentioning that for the main bulk of calculations, we used the symbolic package SYM~\cite{dimas2004sym}.

\ 

So far, we only have a description of the Lie point symmetries of the Ricci flow for the two-dimensional case, see 
Cimpoiasu and Constantinescu~\cite{cimpoiasu2006symmetries} and Wang~\cite{wang2013symmetries}. The goal of the present paper is to carry out a complete group classification of the Ricci flow on $n$-dimensional manifolds and its use in constructing analytical solutions for particular metrics. Our central result is the following: 

\begin{teor}\label{teo1rfn}
The Lie algebra of the classical symmetries of the Ricci flow on a Riemannian manifold $(M^n,g)$ with $n\geq 2$ is spanned by:
\begin{eqnarray*}\label{simericciflow}
\begin{aligned}
    X_1=&\frac{\partial}{\partial t},\\
    X_2=&t\frac{\partial}{\partial t}+\sum_{i=1}^n \sum_{j\geq i}^n g_{ij}\frac{\partial}{\partial g_{ij}},\\
    X_{k+2}=&\xi^k\frac{\partial}{\partial x^k}-\sum_{i=1}^n \sum_{j\geq i}^n \left( g_{ki}\frac{\partial\xi^k}{\partial x^j}+g_{kj}\frac{\partial\xi^k}{\partial x^i}\right)\frac{\partial}{\partial g_{ij}},
\end{aligned}
\end{eqnarray*}
where $\xi^1, \cdots ,\xi^n$ are arbitrary smooth functions of $x^1,\cdots, x^n$, $k=1,\cdots,n$ and $g_{ij}$ are the coefficients of the metric tensor $g$.
\end{teor}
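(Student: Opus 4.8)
The plan is to apply the standard Lie symmetry algorithm to the Ricci flow system, treating it as a system of PDEs for the metric components $g_{ij}(x^1,\dots,x^n,t)$. I would write a general infinitesimal generator
\[
X=\tau\,\frac{\partial}{\partial t}+\sum_{k=1}^n\xi^k\,\frac{\partial}{\partial x^k}+\sum_{i=1}^n\sum_{j\geq i}^n \eta_{ij}\,\frac{\partial}{\partial g_{ij}},
\]
where $\tau,\xi^k,\eta_{ij}$ are functions of $t,x^l,g_{pq}$. The invariance condition is $\mathrm{pr}^{(2)}X\bigl(\partial_t g_{ij}+2\,Ric_{ij}\bigr)=0$ on solutions, where $\mathrm{pr}^{(2)}X$ is the second prolongation, since the Ricci tensor involves second derivatives of the metric through the Christoffel symbols. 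The main practical difficulty here is that $Ric_{ij}$ is an enormously complicated rational-differential expression in the $g_{pq}$ and their first and second spatial derivatives, so the determining equations form a large overdetermined system; this is precisely where the symbolic package SYM is invoked.

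First I would substitute the prolongation formulas for the coefficients multiplying $\partial_t g_{ij}$ and each second-order spatial derivative $\partial_{kl}g_{pq}$, then split the resulting identity by collecting coefficients of the independent jet variables (the highest derivatives that do not appear in the equation itself). The cleanest route is to begin with the dependence of $\tau$ and $\xi^k$: by examining the coefficients of terms linear in the top derivatives I expect to force $\tau=\tau(t)$ and $\xi^k=\xi^k(x^1,\dots,x^n)$ independent of $g$ and $t$, reflecting that the flow is first order in $t$ and that spatial reparametrizations cannot mix with the metric fibre. This already isolates $X_1$ (constant $\tau$), the scaling piece of $X_2$ (from $\tau=t$), and the diffeomorphism generators $X_{k+2}$ (from the spatial $\xi^k$).

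Next I would determine the $\eta_{ij}$. The geometric heart of the computation is that under an infinitesimal change of coordinates $x^k\mapsto x^k+\varepsilon\,\xi^k$, the metric transforms by the Lie derivative, so the induced variation of $g_{ij}$ is $-(\mathcal{L}_\xi g)_{ij}=-\bigl(g_{ki}\,\partial_j\xi^k+g_{kj}\,\partial_i\xi^k+\xi^k\partial_k g_{ij}\bigr)$; the $\xi^k\partial_k g_{ij}$ part is already accounted for by the spatial component of $X$, leaving exactly the $\eta_{ij}$ displayed in $X_{k+2}$. This is the conceptual step I would highlight: the diffeomorphism covariance of the Ricci tensor guarantees these are symmetries, and the determining equations merely confirm it. For the remaining, $g$-linear part I would verify that $\eta_{ij}=c\,g_{ij}$ with $c$ constant is forced, using the second-order homogeneity of $Ric$ under $g\mapsto\lambda g$ (which scales $Ric_{ij}$ with weight zero but $\partial_t g_{ij}$ with weight one), thereby tying the scaling of the metric to $\tau=t$ and producing $X_2$.

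The step I expect to be the genuine obstacle is proving completeness — that no further symmetries exist, i.e. that $\tau,\xi^k$ cannot depend on $g_{pq}$ and that $\eta_{ij}$ must be affine in the metric with no extra arbitrary functions. This requires carefully exhausting the determining equations coming from the pure second-derivative terms of $Ric$, whose coefficients are polynomial in $g^{pq}$ (the inverse metric); one must argue that the only way these can vanish identically in the jet variables is the stated form. I would organize this by treating the inverse-metric entries as algebraically independent quantities subject to $g^{pq}g_{qr}=\delta^p_r$ and splitting accordingly, which reduces the problem to a finite linear-algebraic system whose solution space is exactly the span claimed; the dimension count $2+n$ arbitrary-function slots then matches the generators $X_1,X_2,X_{3},\dots,X_{n+2}$.
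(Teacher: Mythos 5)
Your plan follows essentially the same route as the paper's proof: a general infinitesimal generator, second prolongation, splitting the determining equations starting from the derivative terms that do not appear in the Ricci flow itself (forcing $\tau=\tau(t)$ and $\xi^k=\xi^k(x)$), the Lie-derivative/covariance identification of the infinite-dimensional part, and the scale-invariance of $Ric$ tying $\eta_{ij}=c\,g_{ij}$ to $\tau=t$. The only real difference is bookkeeping: the paper carries out your completeness step entirely by hand using Marchildon's formalism (the tensors $X^{\mu\nu\kappa\lambda}$ and an index-by-index case analysis), and uses SYM only for the low-dimensional cases $n=2,3,4$ that motivated the conjecture, not for the general-$n$ argument.
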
  

This paper is organized as follows: in Section $2$, we give a brief introduction of all the notions utilized in the rest of the text as well as we illustrate the key method that we employ for obtaining the Lie point symmetries of the Ricci flow for particular families of metrics from the Lie algebra of Theorem~\ref{teo1rfn}. Our main result resides in Section $3$, where we determine the Lie point symmetries of the Ricci flow for the $n$-dimensional case as well as its optimal system for its finite--dimensional sub algebra. In the next two Sections, we show how this general result can be utilized to obtain the Lie point symmetries for particular metrics: We start the exhibition  by retrieving the Lie symmetries of the Einstein equations for the $n$-dimensional case. Next, we determine the Lie point symmetries of the Ricci flow for product manifolds expressed by warped and doubly-warped metrics. Then, we build invariant solutions from these symmetries.  Finally, Section $6$ contains comments and concluding remarks.

\section{Preliminaries}
In this section, we will briefly introduce the general concept of Lie point symmetry of differential equations. For a detailed approach, we recommend reading~\cite{hydon2000symmetry,ibr4,olver2000applications,ovsiannikov2014group}. We restrict our attention to connected local Lie groups of symmetries, leaving aside discrete symmetries.

Let assume a system of differential equations of $n$-th order with $p$ independent and $q$ dependent variables 
\begin{equation*}
    \Delta_\nu(x,u^{(n)})=0,\quad\quad\quad\nu=1,\dots,l,
\end{equation*}
involving the derivatives of $u$ with respect to $x$ up to order $n$. Moreover, we consider  all involved functions, vector fields, and tensors sufficiently smooth in their arguments. Note that the aforementioned system  can be viewed as a smooth map from the jet space $V\times U^{(n)}$ ($V$ is an open set of $\mathds{R}^p$ while $U^{(n)}$ is the Cartesian product space whose coordinates represent all  the partial derivatives, up to order $n$, of the components of the vector function $u$) to $\mathds{R}^l$:
\begin{equation*}
    \Delta:V\times U^{(n)}\rightarrow \mathds{R}^l.
\end{equation*}
If $0\in \mathds{R}^l$ is a regular value of the previous mapping, we have that 
\begin{equation*}
\mathscr{S}_{\Delta}=\left\{\left(x, u^{(n)}\right): \Delta\left(x, u^{(n)}\right)=0\right\} \subset V \times U^{(n)},
\end{equation*}
determines a sub-manifold of the jet space. Henceforth we will consider zero to be a regular value.

A smooth solution of the given system of differential equations is a smooth vector function $u=f(x)$ such that
\begin{equation*}
    \Delta_\nu (x,f^{(n)})=0\quad\quad\quad\nu=1,\dots,l.
\end{equation*}
This is just a restatement of the fact that the derivatives $\partial_Jf^\alpha(x)$ of $f$ must satisfy the algebraic constraints imposed by the system of differential equations.

A Lie point symmetry group of this system is a local group of transformations which maps solutions to solutions. That is, if $G$ be a local group of transformations acting on $V\times U$, then $g(x,u^{(n)})\in \mathscr{S}_{\Delta}$ whenever $(x,u^{(n)})\in\mathscr{S}_{\Delta}$ and $g\in G$.

The tangent space of a  local group of transformations is a Lie algebra --- its vectors are often called infinitesimal generators. With starting point a Lie algebra by employing its  exponential mapping, we can retrieve the  local group of transformations with each element of the Lie algebra giving rise to a local subgroup of transformations called (local) one-parameter group. Through this local isomorphism, it is common practice to identify a Lie point symmetry with an infinitesimal generator. 

\begin{teor} 
\cite[Theorem~2.31]{olver2000applications}. Suppose that
\begin{equation*}
\Delta_{\nu}\left(x, u^{(n)}\right)=0, \quad \nu=1, \ldots, l,
\end{equation*}
is a system of differential equations over $M \subset V \times U$ with zero being a regular value of $\Delta$. If $G$ is a local group of transformations acting on $M$, $\mathcal G$ its corresponding Lie algebra and
\begin{equation*}
\operatorname{pr}^{(n)} \mathbf{X}\left[\Delta_{\nu}\left(x, u^{(n)}\right)\right]=0, \quad \nu=1, \ldots, l, \text { whenever } \quad \Delta\left(x, u^{(n)}\right)=0,
\end{equation*}
for every infinitesimal generator $\mathrm{X}$ of $\mathcal G$, then $G$ is a Lie point symmetry group of the system.
\end{teor}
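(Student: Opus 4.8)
The plan is to convert the analytic assertion ``$G$ maps solutions to solutions'' into the geometric assertion ``the prolonged group $\operatorname{pr}^{(n)} G$ leaves the solution submanifold $\mathscr{S}_\Delta$ invariant,'' and then to verify this invariance infinitesimally through the tangency of the prolonged generators. First I would record the geometric reformulation. The action of $G$ on $V \times U$ lifts canonically to the prolonged action of $\operatorname{pr}^{(n)} G$ on $V \times U^{(n)}$, characterised by the naturality relation $\operatorname{pr}^{(n)}(g \cdot f) = \operatorname{pr}^{(n)} g \cdot \operatorname{pr}^{(n)} f$ for any smooth $f$ and $g \in G$. If $u = f(x)$ is a solution, then its $n$-jet $\operatorname{pr}^{(n)} f$ lies in $\mathscr{S}_\Delta$ by definition; hence, once I know that $\operatorname{pr}^{(n)} G$ leaves $\mathscr{S}_\Delta$ invariant, the jet of the transformed function $\operatorname{pr}^{(n)}(g \cdot f) = \operatorname{pr}^{(n)} g \cdot \operatorname{pr}^{(n)} f$ again lies in $\mathscr{S}_\Delta$, so $g \cdot f$ is once more a solution. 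Thus it suffices to prove that $\operatorname{pr}^{(n)} G$ leaves $\mathscr{S}_\Delta$ invariant.

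Next I would invoke the flow characterisation of invariant submanifolds. Because $G$ is connected, each of its elements is a finite product of exponentials $\exp(\epsilon \mathbf{X})$ with $\mathbf{X} \in \mathcal{G}$, and correspondingly $\operatorname{pr}^{(n)} G$ is generated by the flows of the prolonged fields $\operatorname{pr}^{(n)} \mathbf{X}$; it therefore suffices to show that each such flow preserves $\mathscr{S}_\Delta$. This in turn follows once each $\operatorname{pr}^{(n)} \mathbf{X}$ is known to be tangent to $\mathscr{S}_\Delta$ along $\mathscr{S}_\Delta$ (established in the next step): a field tangent to a submanifold restricts to a vector field on it, whose integral curves lie on the submanifold, and by uniqueness of integral curves in the ambient space the flow of $\operatorname{pr}^{(n)} \mathbf{X}$ issuing from any point of $\mathscr{S}_\Delta$ must coincide with this restricted flow and hence remain on $\mathscr{S}_\Delta$.

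Finally I would translate the hypothesis into the required tangency. Since $0$ is a regular value of $\Delta$, the differentials $d\Delta_1, \dots, d\Delta_l$ are linearly independent along $\mathscr{S}_\Delta$, so the tangent space at each point of $\mathscr{S}_\Delta$ is exactly the common kernel of these differentials; consequently a vector field $\mathbf{Y}$ is tangent to $\mathscr{S}_\Delta$ there if and only if $\mathbf{Y}[\Delta_\nu] = 0$ on $\mathscr{S}_\Delta$ for every $\nu$. Applying this with $\mathbf{Y} = \operatorname{pr}^{(n)} \mathbf{X}$ shows that the hypothesis $\operatorname{pr}^{(n)} \mathbf{X}[\Delta_\nu] = 0$ whenever $\Delta = 0$ is precisely the tangency invoked above, and chaining the three reductions yields the theorem. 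The crux, and the step demanding the most care, is the passage in the second paragraph from infinitesimal tangency to genuine invariance under the full connected group: one must manage the purely local nature of the transformations, reduce invariance under $G$ to invariance under the generating one-parameter flows using connectedness, and then apply the uniqueness theorem for ordinary differential equations to confine each flow to $\mathscr{S}_\Delta$. I would also note that only these forward implications are needed here; the converse statement would additionally require a local solvability hypothesis on the system, which is why the theorem is stated as a sufficient condition.
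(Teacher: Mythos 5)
The paper offers no proof of this statement at all---it is quoted directly from Olver \cite[Theorem~2.31]{olver2000applications}---so the only meaningful comparison is with the standard argument in that reference, which your proposal reproduces correctly: the regular-value hypothesis identifies the tangent space of $\mathscr{S}_{\Delta}$ with the common kernel of the differentials $d\Delta_{\nu}$, turning the infinitesimal hypothesis into tangency of $\operatorname{pr}^{(n)}\mathbf{X}$; connectedness together with uniqueness of integral curves upgrades tangency to invariance of $\mathscr{S}_{\Delta}$ under the prolonged flows (you have in effect re-proved Olver's infinitesimal invariance criterion, his Theorem~2.8, inline); and the naturality relation $\operatorname{pr}^{(n)}(g \cdot f)=\operatorname{pr}^{(n)} g \cdot \operatorname{pr}^{(n)} f$ converts invariance of $\mathscr{S}_{\Delta}$ into the statement that $g \cdot f$ solves the system whenever $f$ does. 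Your closing caveats---that $g \cdot f$ is only locally defined and that the converse implication would additionally require local solvability---are precisely the qualifications recorded in the cited source, so the proposal is a complete and correct proof of the sufficiency claim as stated.
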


From these infinitesimal generators, it is possible to reduce the number of independent variables of the given system of equations and even obtain solutions. The solutions thus obtained are called invariant solutions of the infinitesimal generator employed. Thus, the natural question that arises once the set of symmetries has been calculated is how to obtain all the invariant solutions. This leads to the concept of optimal system, see~\cite[Proposition~3.6]{olver2000applications}.

The main objective of our study is the Lie point symmetries of the Ricci Flow for a general metric. A difficult and laborious task  indeed! However, from these symmetries --- by a method we developed --- we will be able to retrieve the symmetries for any particular family of metrics by restricting the generic Lie algebra to the metric at hand and hence eliminating the need to repeat each time the same copious process for finding the Lie point symmetries. This method is based on a particular form of the infinitesimal generator known as the canonical form, or simply, the characteristic.

Recall that given an infinitesimal generator
\begin{equation*}
    X=\sum_{i=1}^{p}\xi^i(x,u)\frac{\partial}{\partial x^i}+\sum_{j=1}^q\eta^j(x,u)\frac{\partial}{\partial u^j},
\end{equation*}
its canonical form is the vector
\begin{equation*}
  X=\sum_{j=1}^q Q_j\frac{\partial}{\partial u^j},
\end{equation*}
where
\begin{equation*}
    Q_j=\eta^j(x,u)-\sum_{i=1}^p\xi^i(x,u)\frac{\partial u^j}{\partial x^i}.
\end{equation*}
In the following example, we illustrate the method.
\begin{example}\label{ex:1}
We wish to obtain the Ricci flow symmetries of the metric
\begin{equation*}
    g(x^1,x^2,t)=e^{u(x^1,x^2,t)}\left(dx^1\otimes dx^1+dx^2\otimes dx^2\right),
\end{equation*}
from the symmetries of Theorem~\ref{teo1rfn}. Let 
\begin{equation*}
X=c_1X_1+c_2X_2+X_3+X_4+X_{5},
\end{equation*}
where $c_1,c_2\in\mathds{R}$, be a symmetry for the generic metric as provided by the Theorem~\ref{teo1rfn}. First, we write $X$ in its canonical form:
\begin{equation*}
Q_X=\sum_{1\leq i\leq j\leq 2}Q_{ij}\left(t,x,g_{ps},\frac{\partial g_{ps}}{\partial t},\frac{\partial g_{ps}}{\partial x^r}\right)\frac{\partial}{\partial g_{ij}},
\end{equation*}
where
\begin{equation*}\label{caracgeneral}
	\begin{split}
			Q_{ij}=&c_2 g_{ij}(x)-\sum_{1\leq s\leq 2}\left(g_{si}\frac{\partial\xi^s}{\partial x^j}+g_{sj}\frac{\partial\xi^s}{\partial x^i}\right)-\sum_{1\leq s\leq 2}\xi^s(x)\frac{\partial g_{ij}}{\partial x^s}\\
			&-(c_1+c_2t)\frac{\partial g_{ij}}{\partial t}.
	\end{split}
\end{equation*}
Now, looking at the metric, it is easy to see that we need to impose the following restrictions to the generic metric of dimension $2$: $S=\{g_{11}=g_{22}=e^u$ and $g_{12}=0\}$. In addition, for this particular metric, the infinitesimal generator in the canonical form will be like
\begin{equation*}
\left.Q_X\right\rvert_S=Q\left(x^1,x^2,t, u,\frac{\partial u}{\partial t},\frac{\partial u}{\partial x^1},\frac{\partial u}{\partial x^2}\right)\frac{\partial}{\partial u}.
\end{equation*}
We are ready to give the restrictions that we need to impose to the Lie algebra of Theorem~\ref{teo1rfn} for $n=2$:
\begin{multline*}
	\begin{cases}Q_X(g_{11}) &= \left.Q_X\right\rvert_S(e^u)\\ Q_X(g_{12}) &= \left.Q_X\right\rvert_S(0)\\ Q_X(g_{22}) &= \left.Q_X\right\rvert_S(e^u)\end{cases}\implies \begin{cases}Q_{11} &= e^u Q\\  Q_{12} &= 0\\ Q_{22} &= e^u Q\end{cases}\implies\\
	\begin{cases}
		-\left(\xi^1\frac{\partial u}{\partial x^1}+\xi^2\frac{\partial u}{\partial x^2}+(c_1+c_2t)\frac{\partial u}{\partial t}-c_2+2\frac{\partial \xi^1}{\partial x^1}\right)e^u &=  e^u Q\\
		\left(\frac{\partial\xi^1}{\partial x^2}+\frac{\partial\xi^2}{\partial x^1}\right)e^u &=0\\
		-\left(\xi^1\frac{\partial u}{\partial x^1}+\xi^2\frac{\partial u}{\partial x^2}+(c_1+c_2t)\frac{\partial u}{\partial t}-c_2+2\frac{\partial \xi^2}{\partial x^2}\right)e^u &=  e^u Q
	\end{cases}\implies\\
	\begin{cases}
	Q\left(x^1,x^2,t, u,\frac{\partial u}{\partial t},\frac{\partial u}{\partial x^1},\frac{\partial u}{\partial x^2}\right) &= -\left(\xi^1\frac{\partial u}{\partial x^1}+\xi^2\frac{\partial u}{\partial x^2}+(c_1+c_2t)\frac{\partial u}{\partial t}-c_2+2\frac{\partial \xi^1}{\partial x^1}\right)\\
	\frac{\partial \xi^1}{\partial x^1}-\frac{\partial \xi^2}{\partial x^2}&=0\\
	\frac{\partial\xi^1}{\partial x^2}+\frac{\partial\xi^2}{\partial x^1} &=0.
	\end{cases}
\end{multline*}
Therefore, the Lie algebra for the restricted problem is:
\begin{equation*}\label{eqtor1}
    X= (c_1+c_2 t)\frac{\partial}{\partial t}+\xi^1\frac{\partial}{\partial x^1}+\xi^2\frac{\partial}{\partial x^2}+\left(c_2-2\frac{\partial\xi^1}{\partial x^1}\right)\frac{\partial}{\partial u},
\end{equation*}
where $\xi^1$ and $\xi_2$ are arbitrary functions of $x^1$ and $x^2$ satisfying the Cauchy-Riemann equations. Note that in \cite{cimpoiasu2006symmetries} the authors obtained the very same Lie algebra from the ground up. 
\end{example}

\section{The Lie point symmetries of the Ricci Flow}
In this section, we compute the  Lie point symmetries of the Ricci Flow for the $n-$dimensional case and we classify its finite-dimensional sub algebra. We begin by stating and proving our main result.
 
By direct calculation, using SYM, of the Lie point symmetries for the cases $n=2,3\text{ and } 4$ we arrived to the following conjecture: 
 \begin{conj}
 The Lie algebra of the classical symmetries, Lie point symmetries, of the Ricci flow on a Riemannian manifold $(M^n,g)$ with $n\geq 2$ is spanned by:
\begin{eqnarray}\label{simericciflowConj}
\begin{aligned}
    X_1=&\frac{\partial}{\partial t},\\
    X_2=&t\frac{\partial}{\partial t}+\sum_{i=1}^n \sum_{j\geq i}^n g_{ij}\frac{\partial}{\partial g_{ij}},\\
    X_{k+2}=&\xi^k\frac{\partial}{\partial x^k}-\sum_{i=1}^n \sum_{j\geq i}^n \left( g_{ki}\frac{\partial\xi^k}{\partial x^j}+g_{kj}\frac{\partial\xi^k}{\partial x^i}\right)\frac{\partial}{\partial g_{ij}},
\end{aligned}
\end{eqnarray}
where $\xi^1, \cdots ,\xi^{n-1}$ e $\xi^n$ are arbitrary smooth functions of $x^1,\cdots, x^n$, $k\in\{1,\cdots,n\}$ and $g=(g_{ij})$ is the metric tensor.
 \end{conj}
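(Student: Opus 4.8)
The plan is to prove the statement in two stages: first that each listed field is a symmetry (sufficiency), and then that the determining equations admit nothing more (completeness). For sufficiency I would avoid the prolongation machinery and argue geometrically. The operator $X_1=\partial_t$ is a symmetry because $\partial_t g=-2Ric$ is autonomous in $t$. For $X_2$ I would use the parabolic rescaling: since the Christoffel symbols, and hence $Ric$, are invariant under a constant rescaling $g\mapsto\lambda g$, the map $(t,g)\mapsto(\lambda t,\lambda g)$ sends a solution $g(t)$ to $\lambda g(t/\lambda)$, which again solves the flow; differentiating this family at $\lambda=1$ yields exactly $X_2$. For $X_{k+2}$ I would invoke the diffeomorphism covariance of the Ricci tensor: the flow of a field $V=\xi^k\partial_{x^k}$ pulls solutions back to solutions, and its infinitesimal action on the metric is $-\mathcal L_Vg$; writing $(\mathcal L_Vg)_{ij}=\xi^k\partial_kg_{ij}+g_{kj}\partial_i\xi^k+g_{ki}\partial_j\xi^k$ and passing to the full (non-characteristic) form, the transport term $\xi^k\partial_kg_{ij}$ is absorbed into the $\xi^k\partial_{x^k}$ component, leaving precisely the $\eta_{ij}$ of $X_{k+2}$; this holds for arbitrary smooth $\xi^k(x)$, producing the infinite-dimensional part. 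Each of these can alternatively be confirmed directly by the infinitesimal criterion of the cited theorem.

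For completeness I would take a general generator $X=\tau\partial_t+\xi^k\partial_{x^k}+\sum_{i\le j}\eta_{ij}\partial_{g_{ij}}$ with $\tau,\xi^k,\eta_{ij}$ functions of $(t,x,g)$, prolong to second order (the order of $Ric$ in the spatial jets of $g$), and impose $\operatorname{pr}^{(2)}X(\Delta_{ij})=0$ on $\Delta_{ij}:=\partial_tg_{ij}+2R_{ij}=0$. In characteristic form, with $Q_{ij}=\eta_{ij}-\tau\partial_tg_{ij}-\xi^k\partial_kg_{ij}$, the condition reads
\[
D_tQ_{ij}+2\sum_{a\le b}\Big(Q_{ab}\,\partial_{g_{ab}}R_{ij}+(D_cQ_{ab})\,\partial_{(\partial_cg_{ab})}R_{ij}+(D_cD_dQ_{ab})\,\partial_{(\partial_c\partial_dg_{ab})}R_{ij}\Big)=0 ,
\]
evaluated on the solution manifold. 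The key reduction is to use the equation and its spatial derivatives to eliminate every time derivative of $g$ (for instance $\partial_t\partial_c\partial_dg_{ab}=-2\partial_c\partial_dR_{ab}$), leaving $g$ and its spatial derivatives up to order four as independent jet coordinates; the symmetry condition then becomes a rational identity in these coordinates, and splitting it by monomials in the highest derivatives produces the determining system.

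I would solve that system from the top down. Because $R_{ij}$ contains $g^{-1}$ and terms quadratic in $\partial g$, the identity is rational in the $g_{ab}$; after clearing denominators (or, equivalently, evaluating and differentiating around $g_{ab}=\delta_{ab}$, $\partial g=0$, which is legitimate pointwise) the genuinely nonlinear monomials can be balanced only if $\tau$ and $\xi^k$ are independent of the metric, i.e. $\partial_{g_{ab}}\tau=\partial_{g_{ab}}\xi^k=0$. The next block — those equations pairing a first spatial derivative of the characteristic with the principal part of $Ric$ — forces $\partial_t\xi^k=0$ and $\partial_{x^i}\tau=0$, so $\tau=\tau(t)$ and $\xi^k=\xi^k(x)$. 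A further, now linear, set of equations then pins down $\eta_{ij}=\tau'(t)g_{ij}-(g_{ki}\partial_j\xi^k+g_{kj}\partial_i\xi^k)$; feeding this back, the residual term $\partial_t\eta_{ij}=\tau''(t)g_{ij}$ must vanish, giving $\tau''=0$, whence $\tau=c_1+c_2t$ and $\eta_{ij}=c_2g_{ij}-(g_{ki}\partial_j\xi^k+g_{kj}\partial_i\xi^k)$ with $\xi^k$ otherwise free. This is exactly the span of $X_1,X_2,X_{k+2}$.

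The main obstacle is carrying out the top-order analysis uniformly in $n$: unlike the machine-checked cases $n=2,3,4$, one must argue abstractly from $R_{ij}=\partial_k\Gamma^k_{ij}-\partial_j\Gamma^k_{ik}+\Gamma^k_{kl}\Gamma^l_{ij}-\Gamma^k_{jl}\Gamma^l_{ik}$ and control its partial derivatives with respect to the jet variables for every dimension. I expect the cleanest route is to exploit that the determining equations hold at each point and for every admissible jet value, so one may reduce to normal coordinates ($g_{ij}=\delta_{ij}$, $\partial g=0$) to linearize $Ric$ at a point, extract the conditions on $\tau,\xi,\eta$ and their $g$-derivatives there, and then let the point vary. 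The residual difficulty is the bookkeeping of the quadratic $\partial g$ terms of $Ric$: these are precisely where spurious symmetries would hide, and the crux of the argument is showing that their contributions cancel for all $n$, leaving no freedom beyond scaling, time translation, and the diffeomorphism family.
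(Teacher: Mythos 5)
Your sufficiency half is sound and coincides with the paper's own argument: autonomy of the flow for $X_1$, parabolic rescaling invariance of $Ric$ for $X_2$, and diffeomorphism covariance (the point form of $-\mathcal{L}_{\xi^k\partial_{x^k}}g$) for $X_{k+2}$. The problem is the completeness half, which is where essentially all of the paper's proof lives. You set up exactly the machinery the paper uses (second prolongation, characteristic form, elimination of time derivatives via the flow equation, splitting into determining equations by monomials in the spatial jets of $g$), but every substantive conclusion is then asserted rather than derived: that the nonlinear monomials force $\partial_{g_{ab}}\tau=\partial_{g_{ab}}\xi^k=0$, that the next block forces $\tau=\tau(t)$ and $\xi^k=\xi^k(x)$, and above all that a ``now linear'' set of equations pins down $\eta_{ij}=\tau'(t)g_{ij}-(g_{ki}\partial_j\xi^k+g_{kj}\partial_i\xi^k)$. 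That last claim is not what the determining equations give directly. In the paper, the block of second spatial derivative terms only yields $\eta_{\alpha\beta}=-(g_{\alpha\kappa}\partial_\beta\xi^\kappa+g_{\beta\kappa}\partial_\alpha\xi^\kappa)+F_{\alpha\beta}$ with an undetermined additive part $F_{\alpha\beta}$; a long case analysis over index configurations (with $n\geq 4$ requiring a separate argument to remove dependence on $g_{\kappa\delta}$ with four distinct indices, and with positivity of $g^{-1}$ invoked at a key step) reduces $F_{\alpha\beta}$ to $\partial_t\xi^t\,g_{\alpha\beta}+\hat F_{\alpha\beta}(x,t)$, then to $a_1g_{\alpha\beta}+C_{\alpha\beta}$ with $C_{\alpha\beta}$ constant; and that constant is eliminated only at the very end, by the determining equations coming from the quadratic $\partial g\,\partial g$ (Christoffel-squared) terms. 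You flag precisely this yourself --- ``the bookkeeping of the quadratic $\partial g$ terms \dots\ is the crux'' --- so the proposal is a correct roadmap whose main leg is left untraveled; since that leg is the entire content of the paper's proof, this is a genuine gap, not a stylistic difference.

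Two further cautions on the parts you do sketch. First, your claimed order of deductions is not obviously available: in the paper, $\partial_t\xi^k=0$ cannot be read off from an early block, because the terms $\partial_t\xi^\sigma\,\Gamma_{\alpha\beta\sigma}$ live in the same ($\partial g$) block as terms of the form $\partial(\partial\eta_{\alpha\beta}/\partial g_{\mu\nu})\cdot\Gamma$, so that block can be exploited only after the $g$-dependence of $\eta$ has been determined. Second, the ``normal coordinates'' shortcut (evaluating at $g_{ab}=\delta_{ab}$, $\partial g=0$, ``pointwise'') is delicate: in the jet-space formulation the $g_{ab}$ are independent fiber coordinates and the unknowns $\tau,\xi^k,\eta_{ij}$ are functions of them, so evaluation on the slice $g=\delta$ constrains those functions only on that slice, and a coordinate change realizing normal coordinates also transforms the unknowns you are solving for. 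The paper avoids this entirely by arguing at arbitrary positive-definite $g$, dividing out nonvanishing coefficients such as $g^{\alpha\alpha}g^{\hat\rho\hat\rho}-(g^{\alpha\hat\rho})^2>0$. If you intend the normal-coordinate route, you must either prove the unknowns are analytic in $g$ or explain how the conclusions propagate from the slice to all $g$; as written, this step would not survive scrutiny.
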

 
 It is easy to verify that the above generators leave invariant the Ricci flow for  $n\ge2$:
 \begin{itemize}
 	\item The symmetry $X_1$ is obvious since the Ricci flow equations do not involve terms containing explicitly the variable $t$.
	\item As for $X_2$, observe that the Ricci tensor is invariant under scalings, that is $Ric_{cg}=Ric_{g}$ for all $c>0$. Hence, by the one-parameter group generated by $X_2$,
\begin{equation*}
	\Psi_\varepsilon(x,t,g_{ij}) = (x,e^{\varepsilon}t, e^{\varepsilon}g_{ij}),
\end{equation*}
we get that 
\begin{equation*}
	\hat{g}_{ij}(\hat x,\hat t)=e^{\varepsilon}g_{ij}(\hat x,e^{-\varepsilon}\hat t),
\end{equation*} 
and
\begin{equation*}
	\frac{\partial}{\partial \hat t} \hat{g}_{ij}= \frac{\partial}{\partial  t} g_{ij}(x, t)=-2Ric_{g_{ij}}=-2Ric_{e^{-\varepsilon}\hat g_{ij}}=-2Ric_{\hat{g}_{ij}}.
\end{equation*}
\item Finally, let us turn to the infinite ideal $X_{k+2}$. We will show that these symmetries arise from the covariance  of the Ricci flow equations. Let $\hat{x}=\Psi_{\varepsilon}(x)$ be the flow determined by the initial value problem: 
\begin{equation*}
\begin{cases}\frac{d\hat{x}^k}{d\varepsilon}\Big|_{\varepsilon=0} & =\xi^k(\hat x), \\ \hat{x}^k\Big|_{\varepsilon=0} & =x^k.\end{cases}
\end{equation*}
And let $g_{ij}$ be the metric in coordinates $x$. Since it is a tensor, it is covariant. So in any other coordinate system, $\hat x$, it has the form:
\begin{equation*}\label{anali01}
    \hat{g}_{sl}=g_{ij}\frac{\partial x^i}{\partial \hat{x}^s}\frac{\partial x^j}{\partial \hat{x}^l}.
\end{equation*}
We wish to analyse  $\hat{g}_{sl}$ around $\varepsilon=0$, hence obtaining the infinitesimal change of $\hat{g}_{sl}$. By the properties of the flow we know that $ x = \Psi_{-\varepsilon}(\hat x)$. Thus
\begin{eqnarray*}
\begin{aligned}
    \frac{\partial x^i}{\partial \hat{x}^s} &= \left.\frac{\partial x^i}{\partial \hat{x}^s}\right\rvert_{\varepsilon=0}+\left.\frac{d}{d\varepsilon} \frac{\partial x^i}{\partial \hat{x}^k}\right\rvert_{\varepsilon=0}\varepsilon+o(\varepsilon^2)\\
    &= \delta_{is}+\frac{\partial}{\partial\hat{x}^s}\left(\left.\frac{d\Psi^i _{-\varepsilon}}{d\varepsilon}\right\rvert_{\varepsilon=0}\right)\varepsilon+o(\varepsilon^2)\\
    &= \delta_{is}-\frac{\partial \xi^i}{\partial x^s}\varepsilon+o(\varepsilon^2).\quad\quad\quad\quad\quad\quad
\end{aligned}
\end{eqnarray*}
So, \eqref{anali01} becomes
\begin{eqnarray*}
    \hat{g}_{sl}&=&g_{i,j}\left(\delta_{is}-\frac{\partial \xi^i}{\partial x^s}\varepsilon+o(\varepsilon^2)\right)\left(\delta_{jl}-\frac{\partial \xi^j}{\partial x^l}\varepsilon+o(\varepsilon^2)\right)\\
    &=& g_{sl}-\left( g_{si}\frac{\partial \xi^i}{\partial x^l}+g_{il}\frac{\partial \xi^i}{\partial x^s}\right)\varepsilon+o(\varepsilon^2).\quad\quad\quad\quad\quad\,
\end{eqnarray*}
Therefore, as we have claimed, the symmetries $X_{k+2}$ merely depict the covariance of the Ricci flow.
 \end{itemize}
 The conjecture functions as a necessary condition: Indeed the Ricci flow admits \textit{at least} the Lie algebra of symmetries spanned by \eqref{simericciflowConj} To prove the conjecture we need to show that it is also sufficient, that is that the Ricci flow admits no more Lie symmetries than the ones already found. 

\begin{proof}
Let us suppose that
\begin{equation*}
X = \xi^t(t,x_1,\dots, x_n, g) \frac{\partial}{\partial t} + \xi^\mu(t,x_1,\dots, x_n, g) \frac{\partial}{\partial x_\mu} + \eta_{(\mu\nu)}(t,x_1,\dots, x_n, g)\frac{\partial}{\partial g_{\mu\nu}},
\end{equation*}
is the  infinitesimal generator of a Lie point symmetry of the Ricci flow. Observe that the second summation  is restricted so that the dependent variables are not counted twice.

Using this generic form we can arrive at the determining equations. At this point, we employ the idea of Marchildon~\cite{marchildon1998lie} to solve them. First, we write the Ricci Flow as
\begin{equation*}
   2 R_{\alpha\beta}+\partial_t g_{\alpha\beta}=0,
\end{equation*}
where
\begin{eqnarray}\label{riccorex}
\begin{aligned}
R_{\alpha \beta}=& \frac{1}{2} g^{\gamma \delta}\left\{-\partial_{\gamma} \partial_{\delta} g_{\alpha \beta}-\partial_{\alpha} \partial_{\beta} g_{\gamma \delta}+\partial_{\beta} \partial_{\delta} g_{\alpha \gamma}+\partial_{\alpha} \partial_{\gamma} g_{\delta \beta}\right\} \\
&+g^{\gamma \delta} g^{\tau \rho}\left\{\Gamma_{\tau \gamma \alpha} \Gamma_{\rho \delta \beta}-\Gamma_{\tau \gamma \delta} \Gamma_{\rho \alpha \beta}\right\},
\end{aligned}
\end{eqnarray}
and $\Gamma_{\gamma\alpha}^\beta=\Gamma_{\tau\gamma\alpha}g^{\tau\beta}$ with
\begin{equation*}
    \Gamma_{\tau\gamma\alpha}=\frac{1}{2}(\partial_{\alpha}g_{\tau\gamma}+\partial_\gamma g_{\tau\alpha}-\partial_\tau g_{\gamma\alpha}).
\end{equation*}
At this point is advantageous --- for the sake of clarity --- to define the tensor
\begin{equation*}
X^{\mu \nu \kappa \lambda} :=-\frac{\partial g^{\mu \nu}}{\partial g_{\kappa \lambda}}= \begin{cases}g^{\mu \kappa} g^{\nu \lambda}+g^{\mu \lambda} g^{\nu \kappa} & \text { if } \kappa \neq \lambda, \\ g^{\mu \kappa} g^{\nu \lambda} & \text { if } \kappa=\lambda.\end{cases}
\end{equation*}
Since $X^{\mu\nu k\lambda}=X_a^{\nu k \lambda}g^{\mu a}=X_{ab}^{k \lambda}g^{b\nu}g^{\mu a}$, we have that
\begin{equation}\label{definx3}
X_{\mu}^{\nu\kappa\lambda}= \begin{cases}\delta_{\mu}^{\kappa} g^{\nu\lambda}+\delta_{\mu}^{\lambda} g^{\nu\kappa} & \text { if } \kappa \neq \lambda, \\ \delta_{\mu}^{\kappa} g^{\nu\lambda} & \text { if } \kappa=\lambda.\end{cases}
\end{equation}
and
\begin{equation*}
X_{\mu \nu}^{\kappa \lambda}= \begin{cases}\delta_{\mu}^{\kappa} \delta_{\nu}^{\lambda}+\delta_{\mu}^{\lambda} \delta_{\nu}^{\kappa} & \text { if } \kappa \neq \lambda, \\ \delta_{\mu}^{\kappa} \delta_{\nu}^{\lambda} & \text { if } \kappa=\lambda.\end{cases}
\end{equation*}
In particular
\begin{eqnarray}\label{dermetex}
\begin{aligned}
\frac{\partial g_{\mu\nu}}{\partial g_{k\lambda}}&=X_{\mu\nu}^{k\lambda},\\
\frac{\partial(\partial_{\gamma}\partial_\delta g_{\alpha\beta})}{\partial(\partial_k\partial_{\lambda}g_{\mu\nu})}&=X_{\gamma\delta}^{k\lambda}X_{\alpha\beta}^{\mu\nu},\\
2\frac{\partial \Gamma_{\tau\gamma\alpha}}{\partial(\partial_kg_{\mu\nu})}&=\delta^{k}_{\alpha}X_{\tau\gamma}^{\mu\nu}+\delta^{k}_{\gamma}X_{\tau\alpha}^{\mu\nu}-\delta^{k}_{\tau}X^{\mu\nu}_{\gamma\alpha}.
\end{aligned}
\end{eqnarray}
We need to express the partial derivatives of the Ricci tensor with respect to $g_{\mu\nu}$ and its partial derivatives. In what follows, we denote fixed indices with a hat, $\hat{}$. From~\eqref{riccorex},~\eqref{dermetex} we get
\begin{eqnarray*}
\begin{aligned}
 \frac{\partial R_{\alpha \beta}}{\partial\left(\partial_{\kappa} \partial_{\lambda} g_{\mu \nu}\right)} &= \frac{\partial }{\partial\left(\partial_{\kappa} \partial_{\lambda} g_{\mu \nu}\right)}\left( \frac{g^{\gamma \delta}}{2}\left\{-\partial_{\gamma} \partial_{\delta} g_{\alpha \beta}-\partial_{\alpha} \partial_{\beta} g_{\gamma \delta}+\partial_{\beta} \partial_{\delta} g_{\alpha \gamma}+\partial_{\alpha} \partial_{\gamma} g_{\delta \beta}\right\}\right)\\
 &=\frac{1}{2} g^{\gamma \delta}\left\{-X_{\gamma \delta}^{\kappa \lambda} X_{\alpha \beta}^{\mu \nu}-X_{\alpha \beta}^{\kappa \lambda} X_{\gamma \delta}^{\mu \nu}+X_{\delta \beta}^{\kappa \lambda} X_{\gamma \alpha}^{\mu \nu}+X_{\gamma \alpha}^{\kappa \lambda} X_{\delta \beta}^{\mu \nu}\right\},
 \end{aligned}
 \end{eqnarray*}
 \begin{eqnarray*}
 \begin{aligned}
 \frac{\partial R_{\alpha \beta}}{\partial\left(\partial_{\kappa} g_{\mu \nu}\right)} &=\frac{\partial}{\partial\left(\partial_{\kappa} g_{\mu \nu}\right)}\left(g^{\gamma \delta} g^{\tau \rho}\left\{\Gamma_{\tau \gamma \alpha} \Gamma_{\rho \delta \beta}-\Gamma_{\tau \gamma \delta} \Gamma_{\rho \alpha \beta}\right\}\right)\\
 &=\frac{1}{2} g^{\gamma \delta} g^{\tau \rho}\left\{\left[\delta_{\alpha}^{\kappa} X_{\tau \gamma}^{\mu \nu}+\delta_{\gamma}^{\kappa} X_{\tau \alpha}{ }^{\mu \nu}-\delta_{\tau}^{\kappa} X_{\gamma \alpha}{ }^{\mu \nu}\right] \Gamma_{\rho \delta \beta}\right.\\
&+\left[\delta_{\beta}^{\kappa} X_{\rho \delta}{ }^{\mu \nu}+\delta_{\delta}^{\kappa} X_{\rho \beta}^{\mu \nu}-\delta_{\rho}^{\kappa} X_{\delta \beta}{ }^{\mu \nu}\right] \Gamma_{\tau \gamma \alpha} \\
&-\left[\delta_{\delta}^{\kappa} X_{\tau \gamma}{ }^{\mu \nu}+\delta_{\gamma}^{\kappa} X_{\tau \delta}^{\mu \nu}-\delta_{\tau}^{\kappa} X_{\gamma \delta}{ }^{\mu \nu}\right] \Gamma_{\rho \alpha \beta} \\
&\left.-\left[\delta_{\beta}^{\kappa} X_{\rho \alpha}{ }^{\mu \nu}+\delta_{\alpha}^{\kappa} X_{\rho \beta}{ }^{\mu \nu}-\delta_{\rho}^{\kappa} X_{\alpha \beta}{ }^{\mu \nu}\right] \Gamma_{\tau \gamma \delta}\right\},
\end{aligned}
\end{eqnarray*}
\begin{eqnarray*}
\begin{aligned}
\frac{\partial R_{\alpha \beta}}{\partial g_{\mu \nu}} &=\frac{1}{2}\left\{\partial_{\gamma} \partial_{\delta} g_{\alpha \beta}+\partial_{\alpha} \partial_{\beta} g_{\gamma \delta}-\partial_{\delta} \partial_{\beta} g_{\gamma \alpha}-\partial_{\gamma} \partial_{\alpha} g_{\delta \beta}\right\} X^{\gamma \delta \mu \nu} \\
&-\left\{\Gamma_{\tau \gamma \alpha} \Gamma_{\rho \delta \beta}-\Gamma_{\tau \gamma \delta} \Gamma_{\rho \alpha \beta}\right\}\left\{g^{\gamma \delta} X^{\tau \rho \mu \nu}+g^{\tau \rho} X^{\gamma \delta \mu \nu}\right\}.
\end{aligned}
\end{eqnarray*}
See~\cite{marchildon1998lie}. Note that for any symmetric tensor $A$, we have
\begin{equation}\label{tenxi4ind}
    A_{(\mu\nu)}X_{\gamma\delta}^{\mu\nu}=A_{\gamma\delta}.
\end{equation}

The second prolongation of $X$ is
\begin{eqnarray*}
	\begin{aligned}
    X^{(2)}=&\xi^{\mu}\frac{\partial}{\partial x^\mu}+\xi^t\frac{\partial}{\partial t}+\eta_{(\mu\nu)}\frac{\partial}{\partial g_{\mu\nu}}+\eta_{(\mu\nu)k}\frac{\partial}{\partial(\partial_k g_{\mu\nu})}+\eta_{(\mu\nu)t}\frac{\partial}{\partial(\partial_t g_{\mu\nu})}\\
    &+\eta_{(\mu\nu)(k\lambda)}\frac{\partial}{\partial (\partial_k\partial_\lambda g_{\mu\nu})}.
    \end{aligned}
\end{eqnarray*}
Therefore, to obtain the linearization conditions, we need first to expand the equations
\begin{eqnarray}\label{condlineari}
\begin{aligned}
X^{(2)}\Big(R_{\alpha \beta}+&\frac{1}{2}\partial_tg_{\alpha \beta}\Big)=\frac{1}{2} \eta_{\alpha \beta t}+\frac{\eta^{\gamma \delta}}{2} \left\{\partial_{\gamma} \partial_{\delta} g_{\alpha \beta}+\partial_{\alpha} \partial_{\beta} g_{\gamma \delta}-\partial_{\delta} \partial_{\beta} g_{\gamma \alpha}-\partial_{\gamma} \partial_{\alpha} g_{\delta \beta}\right\} \\
&-\left\{\Gamma_{\tau \gamma \alpha} \Gamma_{\rho \delta \beta}-\Gamma_{\tau \gamma \delta} \Gamma_{\rho \alpha \beta}\right\}\left\{g^{\gamma \delta} \eta^{\tau \rho}+g^{\tau \rho} \eta^{\gamma \delta}\right\} \\
&+\frac{1}{2} g^{\gamma \delta} g^{\tau \rho}\left\{\left[\eta_{\tau \gamma \alpha}+\eta_{\tau \alpha \gamma}-\eta_{\gamma \alpha \tau}\right] \Gamma_{\rho \delta \beta}+\left[\eta_{\rho \delta \beta}+\eta_{\rho \beta \delta}-\eta_{\delta \beta \rho}\right] \Gamma_{\tau \gamma \alpha}\right.\\
&\left.-\left[\eta_{\tau \gamma \delta}+\eta_{\tau \delta \gamma}-\eta_{\gamma \delta \tau}\right] \Gamma_{\rho \alpha \beta}-\left[\eta_{\rho \alpha \beta}+\eta_{\rho \beta \alpha}-\eta_{\alpha \beta \rho}\right] \Gamma_{\tau \gamma \delta}\right\} \\
&+\frac{1}{2} g^{\gamma \delta}\left\{-\eta_{\alpha \beta \gamma \delta}-\eta_{\gamma \delta \alpha \beta}+\eta_{\gamma \alpha \delta \beta}+\eta_{\delta \beta \gamma \alpha}\right\}=0,
\end{aligned}
\end{eqnarray}
where
\begin{equation*}
    \eta^{\gamma\delta}=\sum_{a\leq b}\eta_{ab}X^{\gamma\delta a b}=\eta_{(ab)}X^{\gamma\delta a b}.
\end{equation*}
The  functions $\eta_{\tau\gamma\alpha}$, $\eta_{\tau\gamma t}$ and $\eta_{\alpha\beta\gamma\delta}$ are given by the next expressions
\begin{eqnarray*}
\begin{aligned}
    \eta_{\tau\gamma\alpha}&=D_{\alpha}(\eta_{\tau\gamma}-\xi^\sigma\partial_\sigma g_{\tau\gamma}-\xi^{t}\partial_tg_{\tau\gamma})+\xi^\sigma \partial_\alpha \partial_\sigma g^{\tau\gamma}+\xi^t \partial_\alpha \partial_t g^{\tau\gamma},\\
    \eta_{\tau\gamma t}&=D_{t}(\eta_{\tau\gamma}-\xi^\sigma\partial_\sigma g_{\tau\gamma}-\xi^{t}\partial_tg_{\tau\gamma})+\xi^\sigma \partial_t \partial_\sigma g^{\tau\gamma}+\xi^t \partial_t \partial_t g^{\tau\gamma},\\
    \eta_{\alpha\beta\gamma\delta}&=D_{\gamma}D_{\delta}(\eta_{\alpha\beta}-\xi^\sigma\partial_\sigma g_{\alpha\beta}-\xi^t \partial_t g_{\alpha\beta})+\xi^\sigma \partial_\gamma \partial_\delta\partial_\sigma g_{\alpha\beta}+\xi^t\partial_\gamma\partial_\delta\partial_t g_{\alpha\beta},
    \end{aligned}
\end{eqnarray*}
see Theorem 2.36, Olver~\cite{olver2000applications}. Here $D_\alpha$ and $D_t$ denote the total derivative operator 
with respect to $x^\alpha$ and $t$, that is
\begin{eqnarray*}
\begin{aligned}
D_{\alpha}&=\partial_{\alpha}+\partial_{\alpha} g_{(\mu \nu)} \frac{\partial}{\partial g_{\mu \nu}}+\partial_{\alpha} \partial_{\kappa} g_{(\mu \nu)} \frac{\partial}{\partial\left(\partial_{\kappa} g_{\mu \nu}\right)}+\partial_{\alpha} \partial_{(\kappa} \partial_{\lambda)} g_{(\mu \nu)} \frac{\partial}{\partial\left(\partial_{\kappa} \partial_{\lambda} g_{\mu \nu}\right)},\\
D_{t}&=\partial_{t}+\partial_{t} g_{(\mu \nu)} \frac{\partial}{\partial g_{\mu \nu}}+\partial_{t} \partial_{\kappa} g_{(\mu \nu)} \frac{\partial}{\partial\left(\partial_{\kappa} g_{\mu \nu}\right)}+\partial_{t} \partial_{(\kappa} \partial_{\lambda)} g_{(\mu \nu)} \frac{\partial}{\partial\left(\partial_{\kappa} \partial_{\lambda} g_{\mu \nu}\right)}.
\end{aligned}
\end{eqnarray*}
Hence
\begin{eqnarray}\label{pron1ab}
\begin{aligned}
   \eta_{\tau \gamma \alpha}=&\partial_{\alpha} \eta_{\tau \gamma}-\left[\partial_{\sigma} g_{\tau \gamma}\right] \partial_{\alpha} \xi^{\sigma}-\left[\partial_{t} g_{\tau \gamma}\right] \partial_{\alpha} \xi^{t}+\partial_{\alpha} g_{(\mu \nu)} \frac{\partial \eta_{\tau \gamma}}{\partial g_{\mu \nu}}\\
   &-\left[\partial_{\alpha} g_{(\mu \nu)}\right] \partial_{\sigma} g_{\tau \gamma} \frac{\partial \xi^{\sigma}}{\partial g_{\mu \nu}}-\left[\partial_{\alpha} g_{(\mu \nu)}\right] \partial_{t} g_{\tau \gamma} \frac{\partial \xi^{t}}{\partial g_{\mu \nu}},\\
      \eta_{\tau \gamma t}=&\partial_{t} \eta_{\tau \gamma}-\left[\partial_{\sigma} g_{\tau \gamma}\right] \partial_{t} \xi^{\sigma}-\left[\partial_{t} g_{\tau \gamma}\right] \partial_{t} \xi^{t}+\partial_{t} g_{(\mu \nu)} \frac{\partial \eta_{\tau \gamma}}{\partial g_{\mu \nu}}\\
   &-\left[\partial_{t} g_{(\mu \nu)}\right] \partial_{\sigma} g_{\tau \gamma} \frac{\partial \xi^{\sigma}}{\partial g_{\mu \nu}}-\left[\partial_{t} g_{(\mu \nu)}\right] \partial_{t} g_{\tau \gamma} \frac{\partial \xi^{t}}{\partial g_{\mu \nu}},
\end{aligned}
\end{eqnarray}
and
\begin{eqnarray}\label{pron2ab}
\begin{aligned}
   \eta_{\alpha \beta \gamma \delta}=&\partial_{\gamma} \partial_{\delta} \eta_{\alpha \beta}-\partial_\gamma\partial_\delta \xi^\sigma(\partial_\sigma g_{\alpha\beta})-\partial_\gamma\partial_\delta \xi^t (\partial_tg_{\alpha\beta})+(\partial_\delta g_{(\mu\nu)})\partial_\gamma\left(\frac{\partial\eta_{\alpha\beta}}{\partial g_{\mu\nu}}\right)\\
   &+\partial_\gamma g_{(\mu\nu)}\partial_\delta\left(\frac{\partial \eta_{\alpha\beta}}{\partial g_{\mu\nu}}\right)-(\partial_\gamma g_{(\mu\nu)})(\partial_\sigma g_{\alpha\beta})\partial_\delta\left(\frac{\partial\xi^\sigma}{\partial g_{\mu\nu}}\right)\\
   &-(\partial_\delta g_{(\mu\nu)})(\partial_\sigma g_{\alpha\beta})\partial_\gamma\left(\frac{\partial \xi^\sigma}{\partial g_{\mu\nu}}\right)-(\partial_\gamma g_{(\mu\nu)})(\partial_t g_{\alpha\beta})\partial_\delta\left(\frac{\partial\xi^t}{\partial g_{\mu\nu}}\right)\\
   &-(\partial_\delta g_{(\mu\nu)})(\partial_t g_{\alpha\beta})\partial_\gamma\left(\frac{\partial\xi^t}{\partial g_{\mu\nu}}\right)+(\partial_\delta g_{(ab)})(\partial_\gamma g_{(\mu\nu)})\frac{\partial^2\eta_{\alpha\beta}}{\partial g_{\mu\nu}\partial g_{ab}}\\
   &-(\partial_\delta g_{(ab)})(\partial_\gamma g_{(\mu\nu)})(\partial_\sigma g_{\alpha\beta})\frac{\partial^2\xi^\sigma}{\partial g_{\mu\nu}\partial g_{ab}}-(\partial_\delta g_{(ab)})(\partial_\gamma g_{(\mu\nu)})(\partial_t g_{\alpha\beta})\frac{\partial^2\xi^t}{\partial g_{\mu\nu}\partial g_{ab}}\\
   &-(\partial_\delta \partial_\sigma g_{\alpha\beta})\partial_\gamma\xi^\sigma-(\partial_\gamma\partial_\sigma g_{\alpha\beta})\partial_\delta \xi^\sigma-(\partial_\delta\partial_t g_{\alpha\beta})\partial_\gamma\xi^t-(\partial_\gamma\partial_t g_{\alpha\beta})\partial_\delta \xi^t\\
   &+(\partial_\gamma\partial_\delta g_{(\mu\nu)})\frac{\partial\eta_{\alpha\beta}}{\partial g_{\mu\nu}}-(\partial_\gamma g_{(\mu\nu)})(\partial_\delta\partial_\sigma g_{\alpha\beta})\frac{\partial\xi^\sigma}{\partial g_{\mu\nu}}-(\partial_\delta g_{(\mu\nu)})(\partial_\gamma\partial_\sigma g_{\alpha\beta})\frac{\partial\xi^\sigma}{\partial g_{\mu\nu}}\\
   &-(\partial_\gamma g_{(\mu\nu)})(\partial_\delta\partial_t g_{\alpha\beta})\frac{\partial\xi^t}{\partial g_{\mu\nu}}-(\partial_\delta g_{(\mu\nu)})(\partial_\gamma\partial_t g_{\alpha\beta})\frac{\partial\xi^t}{\partial g_{\mu\nu}}\\
   &-(\partial_\sigma g_{\alpha\beta})(\partial_\delta\partial_\gamma g_{(\mu\nu)})\frac{\partial\xi^\sigma}{\partial g_{\mu\nu}}-(\partial_tg_{\alpha\beta})(\partial_\delta\partial_\gamma g_{(\mu\nu)})\frac{\partial\xi^t}{\partial g_{\mu\nu}}.
\end{aligned}
\end{eqnarray}
Finally, we obtain the linearized symmetry conditions --- having in mind that the metric must satisfy also the Ricci flow, $2R_{\alpha\beta}+\partial_t g_{\alpha\beta}=0$ by substituting~\eqref{pron1ab}  and~\eqref{pron2ab} into~\eqref{condlineari}.

The strategy that we will abide by is the following: From the linearized symmetry conditions, we will extract the determining equations starting with the terms involving derivatives of the metric tensor that do not appear in the Ricci flow equation, then continue with terms that involve higher order derivatives of the metric and finally those involving the derivatives of the low order metric tensor since the corresponding determining equations are usually much easier to solve in this order.

Using the expressions of $X^{\mu\nu\kappa\lambda}$, $X_{\mu}^{\nu\kappa\lambda}$ and $X^{\mu\nu}_{\kappa\lambda}$ we can rewrite certain terms, for instance
\begin{eqnarray}\label{ejereneq}
\begin{aligned}
    g^{\gamma\delta}\frac{\partial\xi^t}{\partial g_{(\mu\nu)}}\partial_t g_{\gamma\delta}\partial_\alpha\partial_\beta g_{\mu\nu}&=\partial_tg_{(cd)}(\partial_a\partial_b g_{(\mu\nu)})\frac{\partial\xi^t}{\partial g_{\mu\nu}}\delta^\alpha_a \delta_b^\beta X^{cd}_{\gamma\delta}g^{\gamma\delta}\\
    &=\partial_tg_{(cd)}(\partial_a\partial_b g_{(\mu\nu)})\frac{\partial\xi^t}{\partial g_{\mu\nu}}\delta^\alpha_a \delta_b^\beta G^{cd},
\end{aligned}
\end{eqnarray}
where 
\begin{equation*}
G^{\rho \sigma}=\left\{\begin{array}{cl}
g^{\rho \sigma} & \text { if } \rho=\sigma \\
2 g^{\rho \sigma} & \text { if } \rho \neq \sigma.
\end{array}\right.
\end{equation*}
Renaming the indexes on the right-hand side of~\eqref{ejereneq} as $c\rightarrow\rho$, $a\rightarrow\delta$, $d\rightarrow\kappa$ and $b\rightarrow\gamma$ we get
\begin{eqnarray*}
g^{\gamma\delta}\frac{\partial\xi^t}{\partial g_{(\mu\nu)}}\partial_t g_{\gamma\delta}\partial_\alpha\partial_\beta g_{\mu\nu}=\partial_tg_{(\rho\kappa)}(\partial_\delta\partial_\gamma g_{(\mu\nu)})\frac{\partial\xi^t}{\partial g_{\mu\nu}}\delta^\alpha_\delta \delta_\gamma^\beta G^{\rho\kappa}.
\end{eqnarray*}
We are now ready to extract some of the coefficients keeping in mind \eqref{definx3}, \eqref{dermetex} and~\eqref{tenxi4ind}.
\begin{description}
    \item[From the $\partial g \partial\partial g$ terms]  
    Note that the only components in~\eqref{condlineari} that include the term $\partial g\partial\partial g$ are $\eta_{\alpha\beta\gamma\delta}$, $\eta_{\gamma\delta\alpha\beta}$, $\eta_{\gamma\alpha\delta\beta}$ and $\eta_{\delta\beta\gamma\alpha}$, and since the Ricci flow does not involve the terms $\partial_tg\partial_{\kappa}\partial_{\sigma}g$, we have our first family of determinant equations:
\begin{equation}\label{eqpargparparg2}
	\begin{aligned}
 			 g^{\gamma \delta} \frac{\partial \xi^{t}}{\partial g_{(\mu \nu)}}&\left\{\partial_{t} g_{\alpha \beta} \partial_{\delta} \partial_{\gamma} g_{\mu \nu}+\partial_{t} g_{\gamma \delta} \partial_{\alpha} \partial_{\beta} g_{\mu \nu}-\partial_{t} g_{\alpha \gamma} \partial_{\delta} \partial_{\beta} g_{\mu \nu}\right.\\
			 &\ \left.-\partial_{t} g_{\delta \beta} \partial_{\alpha} \partial_{\gamma} g_{\mu \nu}\right\}=0.  
	\end{aligned}
\end{equation}
Rearranging the previous equation until we have the same index in the derivatives of $g$, we get
\begin{equation}\label{eqtseg1}
    \partial_t g_{(\rho\kappa)}(\partial_\delta\partial_\gamma g_{(\mu\nu)})\frac{\partial\xi^t}{\partial g_{\mu\nu}}\left(g^{\gamma\delta} X^{\rho\kappa}_{\alpha\beta}+\delta_\delta^\alpha \delta
    _\gamma^\beta G^{\rho\kappa}-\delta^\beta _\gamma X^{\delta\rho\kappa}_\alpha-\delta_\delta^\alpha X^{\gamma\rho\kappa}_\beta \right)=0.
\end{equation}

\begin{description}
	\item[Case $\kappa=\rho=\beta,\,\delta=\gamma=\alpha,\,\alpha\neq\beta$] From Eq.~\eqref{eqtseg1} we have
\begin{equation*}\label{eqprime}
     \frac{\partial\xi^t}{\partial g_{(\mu\nu)}}\left(g^{\alpha\alpha}X^{\beta\beta}_{\alpha\beta}- X^{\alpha\beta\beta}_\beta\right)=0,
\end{equation*}
Thus
\begin{equation*}
    \frac{\partial\xi^t}{\partial g_{\mu\nu}}g^{\alpha\beta}=0.
\end{equation*}
for all $\mu\leq\nu$. Therefore $\xi^t$ only depends of $x$ and $t$.	  
\end{description}
Observe that, the terms $\partial_i g\partial_t\partial_j g$ are multiplied by $\frac{\partial\xi^t}{\partial g_{(\mu\nu)}}$, thus they vanish.\\
Since in the Ricci flow there are no terms of the form $\partial_\kappa g\partial_\sigma\partial_\rho g$ we get
\begin{eqnarray}\label{eqpargparparg}
\begin{aligned}
&g^{\gamma \delta} \frac{\partial \xi^{\sigma}}{\partial g_{(\mu \nu)}}\left\{\partial_{\gamma} g_{\mu \nu} \partial_{\delta} \partial_{\sigma} g_{\alpha \beta}+\partial_{\delta} g_{\mu \nu} \partial_{\gamma} \partial_{\sigma} g_{\alpha \beta}+\partial_{\sigma} g_{\alpha \beta} \partial_{\delta} \partial_{\gamma} g_{\mu \nu}\right.
+\partial_{\alpha} g_{\mu \nu} \partial_{\beta} \partial_{\sigma} g_{\gamma \delta}\\
&+\partial_{\beta} g_{\mu \nu} \partial_{\alpha} \partial_{\sigma} g_{\gamma \delta}+\partial_{\sigma} g_{\gamma \delta} \partial_{\alpha} \partial_{\beta} g_{\mu \nu}-\partial_{\delta} g_{\mu \nu} \partial_{\beta} \partial_{\sigma} g_{\alpha \gamma}-\partial_{\beta} g_{\mu \nu} \partial_{\delta} \partial_{\sigma} g_{\alpha \gamma}\\
&-\partial_{\sigma} g_{\alpha \gamma} \partial_{\delta} \partial_{\beta} g_{\mu \nu}\left.-\partial_{\gamma} g_{\mu \nu} \partial_{\alpha} \partial_{\sigma} g_{\delta \beta}-\partial_{\alpha} g_{\mu \nu} \partial_{\gamma} \partial_{\sigma} g_{\delta \beta}-\partial_{\sigma} g_{\delta \beta} \partial_{\alpha} \partial_{\gamma} g_{\mu \nu}\right\}=0.
\end{aligned}
\end{eqnarray}
Rearranging, Eq.~\eqref{eqpargparparg} becomes
\begin{eqnarray}\label{eqxi1}
\begin{aligned}
&\partial_{\gamma} g_{(\mu \nu)} \partial_{\delta} \partial_{\sigma} g_{(\rho \kappa)}\left\{2 g^{\gamma \delta} X_{\alpha \beta}^{\rho \kappa} \frac{\partial \xi^{\sigma}}{\partial g_{\mu \nu}}+g^{\sigma \delta} X_{\alpha \beta}^{\mu \nu} \frac{\partial \xi^{\gamma}}{\partial g_{\rho \kappa}}\right.+\left(\delta_{\alpha}^{\gamma} \delta_{\beta}^{\delta}+\delta_{\alpha}^{\delta} \delta_{\beta}^{\gamma}\right) G^{\rho \kappa} \frac{\partial \xi^{\sigma}}{\partial g_{\mu \nu}}\\
&+\delta_{\alpha}^{\delta} \delta_{\beta}^{\sigma} G^{\mu \nu} \frac{\partial \xi^{\gamma}}{\partial g_{\rho \kappa}} -\delta_{\beta}^{\delta} X_{\alpha}^{\gamma \rho \kappa} \frac{\partial \xi^{\sigma}}{\partial g_{\mu \nu}}-\delta_{\beta}^{\gamma} X_{\alpha}^{\delta \rho \kappa} \frac{\partial \xi^{\sigma}}{\partial g_{\mu \nu}}-\delta_{\beta}^{\sigma} X_{\alpha}^{\delta \mu \nu} \frac{\partial \xi^{\gamma}}{\partial g_{\rho \kappa}} \\
&\left.-\delta_{\alpha}^{\delta} X_{\beta}^{\gamma \kappa \rho} \frac{\partial \xi^{\sigma}}{\partial g_{\mu \nu}}-\delta_{\alpha}^{\gamma} X_{\beta}^{\delta \kappa \rho} \frac{\partial \xi^{\sigma}}{\partial g_{\mu \nu}}-\delta_{\alpha}^{\sigma} X_{\beta}^{\delta \nu \mu} \frac{\partial \xi^{\gamma}}{\partial g_{\rho \kappa}}\right\}=0.
\end{aligned}
\end{eqnarray}
\begin{description}
	\item[Case $\delta=\sigma=\kappa=\hat\kappa, \alpha\neq\hat\kappa, \beta\neq\hat\kappa, \alpha\neq\gamma, \beta\neq\gamma$] From Eq.~\eqref{eqxi1} we have
\begin{equation*}
    0=g^{\hat{\kappa}\hat{\kappa}}X_{\alpha\beta}^{\mu\nu}\frac{\partial\xi^\gamma}{\partial g_{\rho\hat{\kappa}}},
\end{equation*}
that is 
\begin{equation*}
    0=\frac{\partial \xi^\gamma}{\partial g_{\rho\hat{\kappa}}},
\end{equation*}
for all $\gamma$ and $\rho\leq\hat{\kappa}$. Then $\xi^\gamma$ only depends of $x$ and $t$. 
\end{description}
\end{description}

\begin{description}
\item[From the $\partial\partial g$ terms]
Applying the same argument as in~\eqref{eqpargparparg2} and~\eqref{eqxi1}, we have the next set of determining equations
\begin{eqnarray*}
\begin{aligned}
    g^{\gamma\delta}\{&\partial_\delta\partial_tg_{\alpha\beta}\partial_\gamma\xi^t+\partial_\gamma\partial_t g_{\alpha\beta}\partial_\delta \xi^t+\partial_\beta\partial_t g_{\gamma\delta}\partial_\alpha\xi^t+\partial_\alpha\partial_t g_{\gamma\delta}\partial_\beta\xi^t-\partial_\beta\partial_t g_{\gamma\alpha}\partial_\delta \xi^t\\
    &-\partial_\delta\partial_t g_{\gamma\alpha}\partial_\beta \xi^t-\partial_\alpha\partial_t g_{\delta\beta}\partial_\gamma\xi^t-\partial_\gamma\partial_t g_{\delta\beta} \partial_\alpha \xi^t\}=0.
    \end{aligned}
\end{eqnarray*}
Rearranging indices, one can show that the previous equations become
\begin{eqnarray}\label{deterpartx1}
\begin{aligned}
    \partial_\gamma\xi^t\partial_\delta\partial_t g_{(\rho\kappa)}\Big(&2X_{\alpha\beta}^{\rho\kappa}g^{\gamma\delta}+ G^{\rho\kappa}(\delta_\beta^\delta \delta_\alpha^\gamma+\delta_\alpha^\delta \delta_\beta^\gamma)-X_{\alpha}^{\gamma\rho\kappa}\delta_\beta^\delta\\
    &-X_{\alpha}^{\delta\rho\kappa}\delta_\beta^\gamma-X_\beta^{\gamma\rho\kappa}\delta_\alpha^\delta-\delta_{\alpha}^{\gamma}X_\beta^{\delta\rho\kappa}\Big)=0.
    \end{aligned}
\end{eqnarray}
\begin{description}
	\item[Case $\gamma=\delta=\alpha=\beta, \rho=\kappa=\hat\kappa, \hat\kappa\neq\beta$] From Eq.~\eqref{deterpartx1},  it is necessary that
\begin{equation*}
  (\partial_\beta\xi^t)g^{\hat{\kappa}\hat{\kappa}}=0.
\end{equation*}
Thus
\begin{equation*}
    \partial_\beta \xi^t=0,
\end{equation*}
for all $\beta$. Therefore $\xi^t$ depends only on $t$.
\end{description}
\end{description}

Now we group the terms that contain only second spatial derivatives of the metric tensor. However, since we want that $2R_{\alpha\beta}+\partial_t g_{\alpha\beta}=0$, we must also consider the terms that contain $\partial_t g_{\alpha\beta}$. We have
\begin{eqnarray*}
\begin{aligned}
   &+\frac{1}{2}g^{\gamma\delta}(-\partial_\gamma\partial_\delta g_{\alpha\beta}-\partial_\alpha\partial_\beta g_{\gamma\delta}+\partial_\beta\partial_\delta g_{\alpha\gamma}+\partial_\alpha\partial_\gamma g_{\delta\beta})\partial_t\xi^t-\frac{1}{2}g^{\gamma\delta}(-\partial_\gamma\partial_\delta g_{\mu\nu}\\
   &-\partial_\mu\partial_\nu g_{\gamma\delta}+\partial_\nu\partial_\delta g_{\mu\gamma}+\partial_\mu\partial_\gamma g_{\nu\delta } )\frac{\partial\eta_{\alpha\beta}}{\partial g_{(\mu\nu)}}+\frac{1}{2}\eta^{\gamma\delta}(\partial_\gamma\partial_\delta g_{\alpha\beta}+\partial_\alpha\partial_\beta g_{\gamma\delta}\\
   &-\partial_\delta\partial_\beta g_{\gamma\alpha}-\partial_\gamma\partial_\alpha g_{\delta\beta})+\frac{1}{2}g^{\gamma\delta}\left(\partial_\gamma \xi^\sigma\partial_\delta\partial_\sigma g_{\alpha\beta}+\partial_\delta \xi^\sigma\partial_\gamma\partial_\sigma g_{\alpha\beta}-\partial_\gamma\partial_\delta g_{(\mu\nu)}\frac{\partial\eta_{\alpha\beta}}{\partial g_{\mu\nu}}\right)\\
   &+\frac{1}{2}g^{\gamma\delta}\left(\partial_\beta \xi^\sigma\partial_\alpha\partial_\sigma g_{\gamma\delta}+\partial_\alpha\xi^\sigma \partial_\beta\partial_\sigma g_{\gamma\delta}-\partial_\alpha\partial_\beta g_{(\mu\nu)}\frac{\partial\eta_{\gamma\delta}}{\partial g_{\mu\nu}}\right)-\frac{1}{2}g^{\gamma\delta}\Big(\partial_\beta \xi^\sigma\partial_\delta\partial_\sigma g_{\gamma\alpha}\\
   &+\partial_\delta \xi^\sigma \partial_\beta\partial_\sigma g_{\gamma\alpha}-\partial_\delta\partial_\beta g_{(\mu\nu)}\frac{\partial\eta_{\gamma\alpha}}{\partial g_{\mu\nu}}\Big)-\frac{1}{2}g^{\gamma\delta}\Big(\partial_\gamma \xi^\sigma\partial_\alpha\partial_\sigma g_{\delta\beta}+\partial_\alpha\xi^\sigma \partial_\gamma\partial_\sigma g_{\delta\beta}\\
   &-\partial_\gamma\partial_\alpha g_{(\mu\nu)}\frac{\partial\eta_{\delta\beta}}{\partial g_{\mu\nu}}\Big).
   \end{aligned}
\end{eqnarray*}
Rearranging the indices
\begin{eqnarray}\label{indsengder1}
\begin{aligned}
\partial_\gamma\partial_\delta g_{\rho\sigma}\Big(&\delta_\alpha^\rho\delta_\beta^\sigma\eta^{\gamma\delta}+\delta_\gamma^\alpha\delta_\delta^\beta \eta^{\rho\sigma}-\delta_\alpha^\sigma\delta_\beta^\delta \eta^{\rho\gamma}-\delta_\beta^\sigma\delta_\delta^\alpha\eta^{\gamma\rho}+2\delta^\rho_\alpha\delta_\beta^\sigma g^{\kappa\gamma}\partial_\kappa \xi^\delta\\
&-\frac{\partial\eta_{\alpha\beta}}{\partial g_{(\rho\sigma)}}g^{\gamma\delta}+\delta_\gamma^\beta g^{\rho\sigma}\partial_\alpha \xi^\delta+\delta_\gamma^\alpha g^{\rho\sigma}\partial_\beta \xi^\delta-\delta_\gamma^\alpha\delta_\delta^\beta g^{\mu\nu}\frac{\partial\eta_{\mu\nu}}{\partial g_{(\rho\sigma)}}\\
&-\delta_\alpha^\sigma\delta_\gamma^\beta g^{\rho\kappa}\partial_\kappa\xi^\delta-\delta_\alpha^\sigma g^{\rho\gamma} \partial_\beta \xi^\delta+\delta_\delta^\beta g^{\kappa\gamma}\frac{\partial\eta_{\kappa\alpha}}{\partial g_{(\rho\sigma)}}-\delta_\beta^\sigma\delta_\gamma^\alpha g^{\kappa\rho}\partial_\kappa\xi^\delta\\
&-\delta_\beta^\sigma g^{\gamma\rho}\partial_\alpha \xi^\delta+\delta_\delta^\alpha g^{\gamma\kappa}\frac{\partial\eta_{\kappa\beta}}{\partial g_{(\rho\sigma)}}+\{-\delta_\alpha^\rho\delta_\beta^\sigma g^{\gamma\delta}-g^{\rho\sigma} \delta_\gamma^\alpha\delta_\delta^\beta \\
&+\delta_\rho^\alpha\delta_\gamma^\beta g^{\sigma\delta}+\delta_\beta^\sigma\delta_\alpha^\gamma g^{\delta\rho}\}\partial_t\xi^t+g^{\gamma\delta}\frac{\partial \eta_{\alpha\beta}}{\partial g_{(\rho\sigma)}}+g^{\rho\sigma}\frac{\partial\eta_{\alpha\beta}}{\partial g_{(\gamma\delta)}}\\
&-g^{\delta\sigma}\frac{\partial\eta_{\alpha\beta}}{\partial g_{(\gamma\rho)}}-g^{\sigma\delta}\frac{\partial\eta_{\alpha\beta}}{\partial g_{(\rho\gamma)}}\Big).
\end{aligned}
\end{eqnarray}
\begin{description}
	\item[Case $\sigma=\gamma=\delta=\rho=\hat\rho, \hat\rho\neq\alpha\neq\beta$] From Eq.~\eqref{indsengder1} obtain
\begin{equation*}
    \frac{\partial\eta_{\alpha\beta}}{\partial g_{\hat\rho\hat\rho}}g^{\hat\rho\hat\rho}=0,
\end{equation*}
in consequence
\begin{equation}\label{etadifind}
    \eta_{\alpha\beta}=F_{\alpha\beta}(g_{\alpha\rho},g_{\beta\rho},g_{\alpha\alpha},g_{\beta\beta},g_{\kappa\rho},x,t),
\end{equation}
with $\alpha\neq\rho\neq \beta\neq\kappa$.
\end{description}

\begin{description}
	\item[Case $\rho=\sigma=\alpha, \gamma=\delta=\hat\delta, \alpha\neq\hat\delta\neq\beta$] From Eq.~\eqref{indsengder1}, we have the following determining equation
\begin{equation*}\label{equacionrhoneqalpha2}
   -g^{\alpha\hat\delta}\partial_\beta \xi^{\hat\delta}-g^{\hat\delta\alpha}\frac{\partial\eta_{\alpha\beta}}{\partial g_{\alpha\hat\delta}}=0.
\end{equation*}
Using the fact~\eqref{etadifind} we obtain
\begin{equation}\label{eqrhobetadif2}
    \eta_{\alpha\beta}=-\left(\sum_{\kappa\neq \beta,\alpha}g_{\alpha\kappa}\partial_\beta\xi^\kappa\right)+F_{\alpha\beta}(g_{\alpha\beta},g_{\delta\beta},g_{\beta\beta},g_{\alpha\alpha},g_{\kappa\delta},x,t),
\end{equation}
with $\alpha\neq\beta\neq\delta\neq\kappa$.
\end{description}

\begin{description}
	\item[Case $\rho=\sigma=\beta, \gamma=\delta=\hat\delta, \beta\neq\hat\delta\neq\alpha$] From Eq.~\eqref{indsengder1} yields
\begin{equation}\label{equacionrhoneqalpha3}
    -g^{\beta\hat\delta}\partial_\alpha \xi^{\hat\delta}-g^{\hat\delta\beta}\frac{\partial\eta_{\alpha\beta}}{\partial g_{\beta\hat\delta}}=0.
\end{equation}
Having in mind~\eqref{eqrhobetadif2} we obtain from~\eqref{equacionrhoneqalpha3}
\begin{equation}\label{eqalrhodif2}
    \eta_{\alpha\beta}=-\left(\sum_{\kappa\neq \beta,\alpha}g_{\alpha\kappa}\partial_\beta\xi^\kappa+g_{\beta\kappa}\partial_\alpha\xi^\kappa\right)+F_{\alpha\beta}(g_{\alpha\beta},g_{\beta\beta},g_{\alpha\alpha},g_{\kappa\delta},x,t),
\end{equation}
for $\alpha\neq\beta\neq\delta\neq\kappa$.
\end{description}

Note that the only way for $\eta_{\alpha\beta}$ to depend on $g_{\kappa\delta}$ with $\alpha\neq\beta\neq\kappa\neq\delta$ is that $n\geq 4$, our next goal is to prove that $\displaystyle\frac{\partial\eta_{\alpha\beta}}{\partial g_{\kappa\delta}}=0$. We consider for the next step $n\geq 4$.

\begin{description}
	\item[Case $\delta=\gamma,\,\text{and}\, \alpha, \beta\neq \delta$] From Eq.~\eqref{indsengder1} we get
\begin{eqnarray*}
\begin{aligned}
\partial_\delta\partial_\delta g_{\rho\sigma}\Big(&\delta_\alpha^\rho\delta_\beta^\sigma\eta^{\delta\delta}+2\delta^\rho_\alpha\delta_\beta^\sigma g^{\kappa\delta}\partial_\kappa \xi^\delta-g^{\delta\delta}\frac{\partial\eta_{\alpha\beta}}{\partial g_{(\rho\sigma)}}-\delta_\alpha^\sigma g^{\rho\delta} \partial_\beta \xi^\delta-\delta_\beta^\sigma g^{\delta\rho}\partial_\alpha \xi^\delta\\
&-\delta_\alpha^\rho\delta_\beta^\sigma g^{\delta\delta}\partial_t\xi^t+g^{\delta\delta}\frac{\partial \eta_{\alpha\beta}}{\partial g_{(\rho\sigma)}}+g^{\rho\sigma}\frac{\partial\eta_{\alpha\beta}}{\partial g_{\delta\delta}}-g^{\delta\sigma}\frac{\partial\eta_{\alpha\beta}}{\partial g_{(\delta\rho)}}\\
&-g^{\sigma\delta}\frac{\partial\eta_{\alpha\beta}}{\partial g_{(\rho\delta)}}\Big)=0.
\end{aligned}
\end{eqnarray*}
Further, if $\rho\neq\sigma$, and due to the fact that $\partial_\delta\partial_\delta g_{\rho\sigma}=\partial_\delta\partial_\delta g_{\sigma\rho}$, we need to symmetrize the above expression for the indices $\rho$ and $\sigma$, the result is
\begin{eqnarray}\label{eqrhoneqsigm1}
\begin{aligned}
\partial_\delta\partial_\delta g_{\rho\sigma}\Big(&(\delta_\alpha^\rho\delta_\beta^\sigma+\delta_\alpha^\sigma\delta_\beta^\rho)\eta^{\delta\delta}+2(\delta^\rho_\alpha\delta_\beta^\sigma+\delta_\alpha^\sigma\delta_\beta^\rho )g^{\kappa\delta}\partial_\kappa \xi^\delta-g^{\delta\delta}\frac{\partial\eta_{\alpha\beta}}{\partial g_{\rho\sigma}}-(\delta_\alpha^\sigma g^{\rho\delta}\\
&+\delta_\alpha^\rho g^{\sigma\delta} )\partial_\beta \xi^\delta-(\delta_\beta^\sigma g^{\delta\rho}+\delta_\beta^\rho g^{\delta\sigma})\partial_\alpha \xi^\delta-(\delta_\alpha^\rho\delta_\beta^\sigma+\delta_\alpha^\sigma\delta_\beta^\rho) g^{\delta\delta}\partial_t\xi^t\\
&+g^{\delta\delta}\frac{\partial \eta_{\alpha\beta}}{\partial g_{\rho\sigma}}+2g^{\rho\sigma}\frac{\partial\eta_{\alpha\beta}}{\partial g_{\delta\delta}}-\Big(g^{\delta\sigma}\frac{\partial\eta_{\alpha\beta}}{\partial g_{(\delta\rho)}}+g^{\delta\rho}\frac{\partial\eta_{\alpha\beta}}{\partial g_{(\delta\sigma)}}\Big)\\
&-\Big(g^{\sigma\delta}\frac{\partial\eta_{\alpha\beta}}{\partial g_{(\rho\delta)}}+g^{\rho\delta}\frac{\partial\eta_{\alpha\beta}}{\partial g_{(\sigma\delta)}}\Big)\Big).
\end{aligned}
\end{eqnarray}
Finally, if $\sigma=\delta=\hat\delta$, $\rho=\hat\rho$ and $\hat\rho\neq\beta\neq\alpha\neq\hat\delta$, we have the following determining equation
\begin{equation}\label{eqalrhodif2.10}
   \frac{\partial\eta_{\alpha\beta}}{\partial g_{\hat\rho\hat\delta}}g^{\hat\delta\hat\delta}=0.
\end{equation}
From~\eqref{eqalrhodif2} and~\eqref{eqalrhodif2.10} we get
\begin{equation}\label{eqalrhodif2.0}
    \eta_{\alpha\beta}=-\left(\sum_{\kappa\neq \beta,\alpha}g_{\alpha\kappa}\partial_\beta\xi^\kappa+g_{\beta\kappa}\partial_\alpha\xi^\kappa\right)+F_{\alpha\beta}(g_{\alpha\beta},g_{\beta\beta},g_{\alpha\alpha},x,t),
\end{equation}
with $\alpha\neq\beta$.
\end{description}

\begin{description}
	\item[Case $\sigma=\gamma=\delta=\rho=\hat\rho, \alpha=\beta, \text{and}\,\hat\rho\neq\beta$] From Eq.~\eqref{indsengder1}, we have the following determining equation
\begin{equation*}
   g^{\hat\rho\hat\rho}\frac{\partial \eta_{\alpha\alpha}}{\partial g_{\hat\rho\hat\rho}}=0.
\end{equation*}
Therefore
\begin{equation}\label{etaaa01}
    \eta_{\alpha\alpha}=F_{\alpha\alpha}(g_{\alpha\rho},g_{\alpha\alpha},g_{\rho \kappa},x,t),
\end{equation}
with $\rho\neq\alpha\neq \kappa$.
\end{description}

\begin{description}
	\item[Case $\sigma=\rho=\hat\rho, \alpha=\beta, \gamma=\delta=\hat\delta,\text{and}\, \hat\rho\neq\alpha\neq\hat\delta$] From Eq.~\eqref{indsengder1}, we have the following determining equation
\begin{equation}\label{etaaa02}
   -g^{\hat\delta\hat\rho}\frac{\partial \eta_{\alpha\alpha}}{\partial g_{\hat\delta\hat\rho}}=0.
\end{equation}
By~\eqref{etaaa01} and~\eqref{etaaa02}, it follows that
\begin{equation}\label{etaaa04}
    \eta_{\alpha\alpha}=F_{\alpha\alpha}(g_{\alpha\rho},g_{\alpha\alpha},x,t),
\end{equation}
with $\alpha\neq\rho$.
\end{description}

\begin{description}
	\item[Case $\delta=\gamma=\alpha, \sigma=\rho=\hat\rho, \text{and} \, \alpha\neq\hat\rho\neq\beta$] From Eq.~\eqref{indsengder1} and~\eqref{eqalrhodif2}, we have the following determining equation
\begin{equation*}
    g^{\hat\rho\hat\rho}\left(\partial_\beta\xi^\alpha+\frac{\partial\eta_{\alpha\beta}}{\partial g_{\alpha\alpha}}\right)+g^{\alpha\hat\rho}\left(\frac{\partial\eta_{\hat\rho\beta}}{\partial g_{\hat\rho\hat\rho}}+\partial_\beta\xi^{\hat\rho}\right)=0,
\end{equation*}
differentiating this last equation with respect to $g_{\alpha\hat\rho}$
\begin{equation*}
    \frac{\partial g^{\hat\rho\hat\rho}}{\partial g_{\alpha\hat\rho}}\left(\partial_\beta\xi^\alpha+\frac{\partial\eta_{\alpha\beta}}{\partial g_{\alpha\alpha}}\right)+\frac{\partial g^{\alpha\hat\rho}}{\partial g_{\alpha\hat\rho}}\left(\frac{\partial\eta_{\hat\rho\beta}}{\partial g_{\hat\rho\hat\rho}}+\partial_\beta\xi^{\hat\rho}\right)=0.
\end{equation*}
Note that the expressions in parentheses do not depend on $g_{\alpha\rho}$, see~\eqref{eqrhobetadif2}. By manipulating the above equation, we can show that this equation is equivalent to
\begin{equation*}
    (g^{\alpha\alpha}g^{\hat\rho\hat\rho}-g^{\alpha\hat\rho}g^{\alpha\hat\rho})\left(\frac{\partial\eta_{\hat\rho\beta}}{\partial g_{\hat\rho\hat\rho}}+\partial_\beta\xi^{\hat\rho}\right)=0.
\end{equation*}
On the other hand, since $g^{-1}$ is a positive definite matrix, we have that $(g^{\alpha\alpha}g^{\hat\rho\hat\rho}-g^{\alpha\hat\rho}g^{\alpha\hat\rho})>0$. Hence
\begin{equation*}
\frac{\partial\eta_{\hat\rho\beta}}{\partial g_{\hat\rho\hat\rho}}+\partial_\beta\xi^{\hat\rho}=0,
\end{equation*}
for all $\hat\rho\neq\beta$. By~\eqref{eqalrhodif2.0}, we have
\begin{eqnarray}\label{eqalrhodif3}
\begin{aligned}
    \eta_{\alpha\beta}=&-\left(\sum_{\kappa}g_{\alpha\kappa}\partial_\beta\xi^\kappa+g_{\beta\kappa}\partial_\alpha\xi^\kappa\right)+F_{\alpha\beta}(g_{\alpha\beta},x,t)\\
    =&-g_{\alpha\kappa}\partial_\beta\xi^\kappa-g_{\beta\kappa}\partial_\alpha\xi^\kappa+F_{\alpha\beta}(g_{\alpha\beta},x,t).
    \end{aligned}
\end{eqnarray}
with $\alpha\neq\beta$.
\end{description}

\begin{description}
	\item[Case $\rho=\beta=\alpha,\,\gamma=\sigma=\delta=\hat\delta,\text{and}\,\alpha\neq\hat\delta$] From Eq.~\eqref{eqrhoneqsigm1}, we have the following determining equation
\begin{equation}\label{etaaa03}
   -2 g^{\hat\delta\hat\delta}\partial_\alpha\xi^{\hat\delta}-g^{\hat\delta\hat\delta}\frac{\partial\eta_{\alpha\alpha}}{\partial g_{\alpha\hat\delta}}=0.
\end{equation}
Putting \eqref{etaaa03} and~\eqref{etaaa04}, we get
\begin{equation*}
    \eta_{\alpha\alpha}=-2\sum_{\kappa\neq\alpha}g_{\kappa\alpha}\partial_\alpha \xi^\kappa+\hat{F}_{\alpha\alpha}(g_{\alpha\alpha},x,t).
\end{equation*}
We can write
\begin{eqnarray}\label{etaaa05}
\begin{aligned}
    \eta_{\alpha\alpha}=&-2\sum_{\kappa=1}^n g_{\kappa\alpha}\partial_\alpha \xi^\kappa+F_{\alpha\alpha}(g_{\alpha\alpha},x,t)\\
    =&-2 g_{\kappa\alpha}\partial_\alpha \xi^\kappa+F_{\alpha\alpha}(g_{\alpha\alpha},x,t).
    \end{aligned}
\end{eqnarray}
\end{description}

\begin{description}
	\item[Case $\rho=\sigma=\alpha=\beta, \gamma=\delta=\hat\delta,\text{and}\,\hat\delta\neq\alpha$] From Eq.~\eqref{indsengder1} we have the following determining equation
	\begin{equation*}
    \eta^{\hat\delta\hat\delta}+2g^{\kappa\hat\delta}\partial_\kappa\xi^{\hat\delta}-g^{\hat\delta\hat\delta}\partial_t\xi^t=0.
\end{equation*}
Using~\eqref{eqalrhodif3} and~\eqref{etaaa05}, we obtain
\begin{eqnarray}\label{eqetatgg1}
\begin{aligned}
0&=\eta^{\hat\delta\hat\delta}+2g^{\kappa\hat\delta}\partial_\kappa\xi^{\hat\delta}-g^{\hat\delta\hat\delta}\partial_t\xi^t\\
&=\eta_{ab}g^{\hat\delta a}g^{\hat\delta b}+2g^{\kappa\hat\delta}\partial_\kappa\xi^{\hat\delta}-g^{\hat\delta\hat\delta}\partial_t\xi^t\\
&=F_{ab}g^{a\hat\delta}g^{b\hat\delta}-(g_{a\kappa}\partial_b\xi^\kappa+g_{b\kappa}\partial_a\xi^\kappa )g^{a\hat\delta}g^{b\hat\delta}+2g^{\kappa\hat\delta}\partial_\kappa\xi^{\hat\delta}-g^{\hat\delta\hat\delta}\partial_t\xi^t\\
&=F_{ab}g^{a\hat\delta}g^{b\hat\delta}-g^{\hat\delta\hat\delta}\partial_t\xi^t.
\end{aligned}
\end{eqnarray}
Differentiating with respect to $g_{\hat\delta\hat\delta}$ and making elementary simplifications it can be shown that
\begin{equation}\label{eqetatgg2}
   g^{\hat\delta\hat\delta} \frac{\partial F_{\hat\delta\hat\delta}}{\partial g_{\hat\delta\hat\delta}}-F_{ab}g^{a\hat\delta}g^{b\hat\delta}=0.
\end{equation}
Combining~\eqref{eqetatgg1} and~\eqref{eqetatgg2}, we can conclude that
\begin{equation}\label{etaaa06}
    F_{\delta\delta}=\partial_t\xi^t g_{\delta\delta}+\hat{F}_{\delta\delta}(x,t).
\end{equation}
\end{description}

\begin{description}
	\item[Case $\gamma=\delta=\alpha, \rho=\sigma=\beta,\text{and}\,\alpha\neq\beta$] From Eq.~\eqref{indsengder1}, we get the next determining equation
	\begin{eqnarray*}
   -\eta^{\alpha\beta}-(g^{\kappa\beta}\partial_k& \xi^\alpha+g^{\kappa\alpha}\partial_\kappa \xi^\beta)-g^{\alpha\beta}(\partial_\alpha\xi^\alpha-\partial_\beta\xi^\beta)\\
   &+g^{\alpha\beta}\left(\frac{\partial\eta_{\beta\beta}}{\partial g_{\beta\beta}}-\frac{\partial\eta_{\alpha\beta}}{\partial g_{\alpha\beta}}\right)+g^{\alpha\beta}\partial_t\xi^t=0.
\end{eqnarray*}
Using~\eqref{eqalrhodif3} and~\eqref{etaaa05}, this last equation becomes
\begin{equation*}
    0=-F_{ab}g^{\alpha a}g^{\beta b}+g^{\alpha\beta}\frac{\partial F_{\alpha\beta}}{\partial g_{\alpha\beta}}-g^{\alpha\beta}\frac{\partial F_{\beta\beta}}{\partial g_{\beta\beta}}+g^{\alpha\beta}\partial_t\xi^t.
\end{equation*}
By~\eqref{etaaa06}, we have
\begin{equation}\label{detrbdif}
   0=-F_{ab}g^{\alpha a}g^{\beta b}+g^{\alpha\beta}\frac{\partial F_{\alpha\beta}}{\partial g_{\alpha\beta}},
\end{equation}
for all $\alpha\neq\beta$.

Differentiating with respect to $g_{\hat\kappa\hat\kappa}$ with $\hat\kappa\neq\alpha\neq\beta$,
\begin{eqnarray*}
\begin{aligned}
0&=-g^{\alpha\hat\kappa}g^{\beta\hat\kappa}\frac{\partial F_{\hat\kappa\hat\kappa}}{\partial g_{\hat\kappa\hat\kappa}}+F_{ab}\Big[g^{\alpha\hat\kappa}g^{a\hat\kappa}g^{b\beta}+g^{a\alpha}g^{\beta\hat\kappa}g^{b\hat\kappa}\Big]-g^{\alpha\hat\kappa}g^{\beta\hat\kappa}\frac{\partial F_{\alpha\beta}}{\partial g_{\alpha\beta}}\\
&=-\partial_t\xi^t g^{\alpha\hat\kappa}g^{\beta\hat\kappa}+g^{\alpha\hat\kappa}g^{\beta\hat\kappa}\frac{\partial F_{\hat\kappa\beta}}{\partial g_{\hat\kappa\beta}}+g^{\beta\hat\kappa}g^{\alpha\hat\kappa}\frac{\partial F_{\alpha\hat\kappa}}{\partial g_{\alpha\hat\kappa}}-g^{\alpha\hat\kappa}g^{\beta\hat\kappa}\frac{\partial F_{\alpha\beta}}{\partial g_{\alpha\beta}}\\
&=g^{\alpha\hat\kappa}g^{\beta\hat\kappa}\left(-\partial_t\xi^t+\frac{\partial F_{\hat\kappa\beta}}{\partial g_{\hat\kappa\beta}}+\frac{\partial F_{\hat\kappa\alpha}}{\partial g_{\hat\kappa\alpha}}-\frac{\partial F_{\alpha\beta}}{\partial g_{\alpha\beta}}\right),
\end{aligned}
\end{eqnarray*}
for all fixed $\hat\kappa\neq\alpha\neq\beta$. Hence
\begin{equation*}
    0=-\partial_t\xi^t+\frac{\partial F_{\hat\kappa\beta}}{\partial g_{\hat\kappa\beta}}+\frac{\partial F_{\hat\kappa\alpha}}{\partial g_{\hat\kappa\alpha}}-\frac{\partial F_{\alpha\beta}}{\partial g_{\alpha\beta}}.
\end{equation*}
And we can conclude that
\begin{equation*}
    F_{\alpha\beta}=\hat{F}^1_{\alpha\beta}(x,t)g_{\alpha\beta}+\hat{F}^2_{\alpha\beta}(x,t),
\end{equation*}
for all $\alpha\neq\beta$, see \eqref{eqalrhodif3}.

Now, we differentiate equation~\eqref{detrbdif} with respect to $g_{\alpha\beta}$,
\begin{eqnarray*}
\begin{aligned}
0=&-\frac{\partial F_{\alpha\beta}}{\partial g_{\alpha\beta}}(g^{\alpha\alpha}g^{\beta\beta}+g^{\alpha\beta}g^{\alpha\beta})+F_{ab}\Big((g^{\alpha\alpha}g^{a\beta}+g^{\alpha\beta}g^{\alpha a})g^{\beta b}\\
&+g^{\alpha a}(g^{\alpha\beta}g^{\beta b}+g^{\beta\beta}g^{\alpha b})\Big)-(g^{\alpha\alpha}g^{\beta\beta}+g^{\alpha\beta}g^{\beta\alpha})\frac{\partial F_{\alpha\beta}}{\partial g_{\alpha\beta}}\\
=&-2\frac{\partial F_{\alpha\beta}}{\partial g_{\alpha\beta}}(g^{\alpha\alpha}g^{\beta\beta}+g^{\alpha\beta}g^{\alpha\beta})+\frac{\partial F_{\beta\beta}}{\partial g_{\beta\beta}}g^{\beta\beta}g^{\alpha\alpha}+\frac{\partial F_{\alpha\beta}}{\partial g_{\alpha\beta}}g^{\beta\alpha}g^{\alpha\beta}\\
&+\frac{\partial F_{\alpha\beta}}{\partial g_{\alpha\beta}}g^{\alpha\beta}g^{\alpha\beta}+\frac{\partial F_{\beta\beta}}{\partial g_{\beta\beta}}g^{\beta\beta}g^{\alpha\alpha}\\
=&2g^{\alpha\alpha}g^{\beta\beta}\Big(-\frac{\partial F_{\alpha\beta}}{\partial g_{\alpha\beta}}+\partial_t\xi^t\Big),
\end{aligned}
\end{eqnarray*}
for all fixed $\alpha\neq\beta$. Thus
\begin{equation*}
    0=-\frac{\partial F_{\alpha\beta}}{\partial g_{\alpha\beta}}+\partial_t\xi^t.
\end{equation*}
Therefore
\begin{equation}\label{etaab06}
    F_{\alpha\beta}=\partial_t\xi^tg_{\alpha\beta}+\hat{F}_{\alpha\beta}(x,t),
\end{equation}
for all $\beta\neq\alpha$. Putting~\eqref{etaaa06} and~\eqref{etaab06} together, we have that
\begin{equation}\label{etaaa07}
    F_{\alpha\beta}=\partial_t\xi^t g_{\alpha\beta}+\hat{F}_{\alpha\beta}(x,t),
\end{equation}
for all $\alpha,\beta$.
\end{description}

\begin{description}
\item[From the $\partial g$ terms]
For brevity we leave the partial derivatives of the metric in terms of the Christoffel symbols
\begin{eqnarray}\label{eq1cong1}
\begin{aligned}
&-\frac{1}{2}(\Gamma_{\alpha\beta\sigma}+\Gamma_{\beta\alpha\sigma})\partial_t\xi^\sigma+\frac{1}{2}g^{\gamma\delta}g^{\tau\rho}\Big\{(\partial_\alpha\eta_{\tau\gamma}+\partial_\gamma \eta_{\tau\alpha}-\partial_\tau\eta_{\gamma\alpha})\Gamma_{\rho\delta\beta}\\
&+(\partial_\beta \eta_{\rho\delta}+\partial_\delta\eta_{\rho\beta}-\partial_\rho\eta_{\delta\beta})\Gamma_{\tau\gamma\alpha}-(\partial_\delta \eta_{\tau\gamma}+\partial_\gamma \eta_{\tau\delta}-\partial_\tau g_{\gamma\delta})\Gamma_{\rho\alpha\beta}\\
&-(\partial_\beta\eta_{\rho\alpha}+\partial_\alpha\eta_{\rho\beta}-\partial_\rho\eta_{\alpha\beta})\Gamma_{\tau\gamma\delta}\Big\}+\frac{1}{2}g^{\gamma\delta}\Big[\partial_\gamma\partial_\delta\xi^\sigma(\Gamma_{\alpha\beta\sigma}+\Gamma_{\beta\alpha\sigma})\\
&+\partial_\alpha\partial_\beta\xi^\sigma(\Gamma_{\gamma\delta\sigma}+\Gamma_{\delta\gamma\sigma})-\partial_\delta\partial_\beta\xi^\sigma(\Gamma_{\gamma\alpha\sigma}+\Gamma_{\alpha\gamma\sigma})-\partial_\gamma\partial_\alpha\xi^\sigma (\Gamma_{\delta\beta\sigma}+\Gamma_{\beta\delta\sigma})\\
&-2\partial_\gamma\left(\frac{\partial\eta_{\alpha\beta}}{\partial g_{(\mu\nu)}}\right)(\Gamma_{\mu\nu\delta}+\Gamma_{\nu\mu\delta})-\partial_\alpha\left(\frac{\partial\eta_{\gamma\delta}}{\partial g_{(\mu\nu)}}\right)(\Gamma_{\mu\nu\beta}+\Gamma_{\nu\mu\beta})\\
&-\partial_\beta\left(\frac{\partial \eta_{\gamma\delta}}{\partial g_{(\mu\nu)}}\right)(\Gamma_{\mu\nu\alpha}+\Gamma_{\nu\mu\alpha})+\partial_\delta\left(\frac{\partial\eta_{\gamma\alpha}}{\partial g_{(\mu\nu)}}\right)(\Gamma_{\mu\nu\beta}+\Gamma_{\nu\mu\beta})\\
&+\partial_\beta\left(\frac{\partial\eta_{\gamma\alpha}}{\partial g_{(\mu\nu)}}\right)(\Gamma_{\mu\nu\delta}+\Gamma_{\nu\mu\delta})+\partial_\gamma\left(\frac{\partial\eta_{\delta\beta}}{\partial g_{(\mu\nu)}}\right)(\Gamma_{\mu\nu\alpha}+\Gamma_{\nu\mu\alpha})\\
&+\partial_\alpha\left(\frac{\partial\eta_{\delta\beta}}{\partial g_{(\mu\nu)}}\right)(\Gamma_{\mu\nu\gamma}+\Gamma_{\nu\mu\gamma})\Big].
\end{aligned}
\end{eqnarray}

On the other hand, note that
\begin{eqnarray*}
\partial_\gamma\left(\frac{\partial\eta_{\alpha\beta}}{\partial g_{(\mu\nu)}}\right)(\Gamma_{\mu\nu\delta}+\Gamma_{\nu\mu\delta})=\partial_\gamma\left(\frac{\partial F_{\alpha\beta}}{\partial g_{\alpha\beta}}\right)(\Gamma_{\alpha\beta\delta}+\Gamma_{\mu\delta}),
\end{eqnarray*}
see~\eqref{eqalrhodif3} and~\eqref{etaaa05}. Using~\eqref{eqalrhodif3} and~\eqref{etaaa05}, the Eq.~\eqref{eq1cong1} becomes
\begin{eqnarray}\label{eq1cong2}
\begin{aligned}
&-\frac{1}{2}(\Gamma_{\alpha\beta\sigma}+\Gamma_{\beta\alpha\sigma})\partial_t\xi^\sigma+\frac{1}{2}g^{\gamma\delta}g^{\tau\rho}\Big\{(\partial_\alpha \hat{F}_{\tau\gamma}+\partial_\gamma \hat{F}_{\tau\alpha}-\partial_\tau \hat{F}_{\gamma\alpha})\Gamma_{\rho\delta\beta}\\
&+(\partial_\beta \hat{F}_{\rho\delta}+\partial_\delta \hat{F}_{\rho\beta}-\partial_\rho \hat{F}_{\delta\beta})\Gamma_{\tau\gamma\alpha}-(\partial_\delta \hat{F}_{\tau\gamma}+\partial_\gamma \hat{F}_{\tau\delta}-\partial_\tau \hat{F}_{\gamma\delta})\Gamma_{\rho\alpha\beta}\\
&-(\partial_\beta \hat{F}_{\rho\alpha}+\partial_\alpha \hat{F}_{\rho\beta}-\partial_\rho \hat{F}_{\alpha\beta})\Gamma_{\tau\gamma\delta}\Big\}.
\end{aligned}
\end{eqnarray}
Rearranging indices and considering the symmetry $\Gamma_{\lambda\mu\nu}=\Gamma_{\lambda\nu\mu}$,~\eqref{eq1cong2} becomes
\begin{eqnarray}\label{reunaderab}
\begin{aligned}
&\Big\{g^{\tau\lambda}\left(\delta_\nu^\beta g^{\gamma\mu}+\delta_\mu^\beta g^{\gamma\nu}\right)(\partial_\alpha \hat{F}_{\tau\gamma}+\partial_\gamma \hat{F}_{\tau\alpha}-\partial_\tau \hat{F}_{\gamma\alpha})+g^{\lambda\rho}(\delta_{\nu}^\alpha g^{\mu\delta}+\delta_\mu^\alpha g^{\nu\delta})\\
&(\partial_\beta \hat{F}_{\rho\delta}+\partial_\delta \hat{F}_{\rho\beta}-\partial_\rho \hat{F}_{\delta\beta})-g^{\tau\lambda}g^{\gamma\delta}(\delta_\mu^\alpha\delta_\nu^\beta+\delta_\nu^\alpha\delta_\mu^\beta)(\partial_\delta \hat{F}_{\tau\gamma}+\partial_\gamma \hat{F}_{\tau\delta}-\partial_\tau \hat{F}_{\gamma\delta})\\
&-2 g^{\lambda\rho}g^{\mu\nu}\left(\partial_\beta \hat{F}_{\rho\alpha}+\partial_\alpha \hat{F}_{\rho\beta}-\partial_\rho \hat{F}_{\alpha\beta}\right)-\delta_\lambda^\alpha\Big(\delta_\mu^\beta\partial_t\xi^\nu+\delta_\nu^\beta\partial_t\xi^\mu\Big)-\partial_\lambda^\beta\Big(\delta_\mu^\alpha\partial_t\xi^\nu\\
&+\delta_\nu^\alpha\partial_t\xi^\mu\Big)\Big\}\frac{1}{2}\Gamma_{\lambda\mu\nu}.
\end{aligned}
\end{eqnarray}

\begin{description}
	\item[Case $\mu\neq\alpha, \nu\neq\alpha, \mu\neq\beta, \lambda=\hat\lambda,\text{and}\, \nu\neq\beta$] From the previous equation we have the following determining equation
	\begin{equation*}
    0=g^{\hat\lambda\rho}\Big(\partial_\beta \hat{F}_{\rho\alpha}+\partial_\alpha \hat{F}_{\rho\beta}-\partial_\rho \hat{F}_{\alpha\beta}\Big),
\end{equation*}
for $\alpha,\beta,\hat\lambda.$ As $\hat F_{\rho\alpha}$ depend only on $x$ and $t$, see~\eqref{etaaa07}, we can conclude that
\begin{equation*}\label{eqdersinderg0}
    0=\partial_\beta F_{\hat\rho\alpha}+\partial_\alpha F_{\hat\rho\beta}-\partial_{\hat\rho} F_{\alpha\beta},
\end{equation*}
for all $\alpha,\beta,\hat\rho$. Permuting the indices and the fact that $\hat{F}_{\alpha\beta}=\hat{F}_{\beta\alpha}$, we can conclude that
\begin{equation*}
    \partial_\beta\hat{F}_{\hat\rho\alpha}=0,
\end{equation*}
for all $\beta,\hat\rho,\alpha$. In particular, $\hat{F}_{\alpha\beta}$ depends only of $t$:
\begin{equation}\label{etaaa0707}
    F_{\alpha\beta}=\partial_t\xi^t g_{\alpha\beta}+\hat{F}_{\alpha\beta}(t).
\end{equation}
\end{description}

Since $F_{\alpha\beta}$ depends only of $t$ and $g_{\alpha\beta}$, the expression~\eqref{reunaderab} becomes
\begin{equation*}
   \Big\{ -\delta_\lambda^\alpha\Big(\delta_\mu^\beta\partial_t\xi^\nu+\delta_\nu^\beta\partial_t\xi^\mu\Big)-\partial_\lambda^\beta\Big(\delta_\mu^\alpha\partial_t\xi^\nu+\delta_\nu^\alpha\partial_t\xi^\mu\Big)\Big\}\frac{1}{2}\Gamma_{\lambda\mu\nu}.
\end{equation*}

\begin{description}
	\item[Case $\alpha=\lambda, \mu=\beta, \alpha\neq\beta,\,\text{and}\,\nu=\hat\nu$] From the above equation, we get the next determining equation
\begin{equation*}
    \partial_t\xi^{\hat\nu}+\delta_{\hat\nu}^\beta\partial_t\xi^\beta=0,
\end{equation*}
for all $\beta,\hat\nu$. Then
\begin{equation*}
    \partial_t\xi^\beta=0,
\end{equation*}
for all $\beta$. This means that $\xi^\beta$ is only a function of $x$. 
\end{description}
\end{description}

\begin{description}
\item[Terms that do not contain derivatives of the metric tensor] We have the following determining equation
\begin{equation}\label{eqdersinderg}
 0= g_{\alpha\beta}\partial_t\partial_t\xi^t+\partial_t\hat{F}_{\alpha\beta}.
\end{equation}
Hence
\begin{equation*}
    \partial_t\partial_t\xi^t=0.
\end{equation*}
Thus, $\xi^t=a_1 t+a_2$ with $a_1$ and $a_2$ constants. From~\eqref{eqdersinderg}, we can also conclude that
\begin{equation*}
   \partial_t\hat{F}_{\alpha\beta}=0.
\end{equation*}
Therefore ~\eqref{etaaa0707} becomes
\begin{equation*}\label{etaaa0708}
    F_{\alpha\beta}=a_1 g_{\alpha\beta}+C_{\alpha\beta},
\end{equation*}
where $C_{\alpha\beta}$ is constant.
\end{description}

\begin{description}
   \item[From the $\partial g \partial g$ terms] We have
\begin{equation*}
    0=\left\{\Gamma_{\tau \gamma \alpha} \Gamma_{\rho \delta \beta}-\Gamma_{\tau \gamma \delta} \Gamma_{\rho \alpha \beta}\right\}\left\{g^{\gamma \delta} g^{d\rho}g^{c\tau}+g^{\tau \rho} g^{d\delta}g^{c\gamma}\right\}C_{cd},
\end{equation*}
for all fixed $\alpha$ and $\beta$. Rearranging indices
\begin{equation*}
    \Gamma_{\tau\gamma a}\Gamma_{\rho\delta\beta}\Big(\delta_\alpha^a(g^{\gamma\delta}g^{d\rho}g^{c\tau}+g^{\tau\rho}g^{d\delta}g^{c\gamma})-\delta_\alpha^\delta(g^{\gamma a}g^{d\rho}g^{c\tau}+g^{\tau\rho}g^{da}g^{c\gamma})\Big)C_{cd}.
\end{equation*} 
\begin{description}
	\item[Case $\gamma=a=\tau=\alpha, \text{and} \, \rho=\delta=\beta$] From the above equation, we have the following determining equation
	\begin{equation*}
    \Gamma_{\alpha\alpha\alpha}\Gamma_{\beta\beta\beta}\Big(g^{\alpha\beta}g^{d\beta}g^{c\alpha}\Big)C_{cd}=0,
\end{equation*}
thus
\begin{equation*}
    \Big(g^{\alpha\beta}g^{d\beta}g^{c\alpha}\Big)C_{cd}=0.
\end{equation*}
By differentiating it for $\displaystyle\frac{\partial}{\partial g^{\beta\beta}}\frac{\partial}{\partial g^{\alpha\beta}}$, we obtain
\begin{eqnarray*}
    g^{c\alpha}C_{c\beta}+g^{\alpha\beta}C_{\beta\beta}=0.
\end{eqnarray*}
From this equation, we have
\begin{equation*}
    C_{\beta\beta}=0,\,C_{\alpha\beta}=0.
\end{equation*}
\end{description}
\end{description}

Gathering everything together we can conclude that a sufficient condition for $X$ to be Lie point symmetry of the Ricci flow is to be of the form
\begin{equation*}
    X=(a_0+a_1t)\frac{\partial}{\partial t}+\xi^k(x)\frac{\partial}{\partial x^k}-\left( g_{ki}\frac{\partial\xi^k}{\partial x^j}+g_{kj}\frac{\partial\xi^k}{\partial x^i}-a_1g_{ij}\right)\frac{\partial}{\partial g_{(ij)}}.
\end{equation*}

Hence, we have proved the Theorem 1.
\end{proof}

Now, we classify the finite-dimensional algebra of Theorem~\ref{teo1rfn}.
\begin{prop}\label{prop01}
The one dimensional optimal system associated with the Lie algebra $\operatorname{span}\{X_1,X_2\}$ is: $\{X_1, X_2\}$. While the two dimensional optimal system is trivially the whole Lie algebra $\operatorname{span}\{X_1,X_2\}$.
\end{prop}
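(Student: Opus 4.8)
The plan is to follow the standard adjoint-action procedure for optimal systems given in Olver~\cite[Proposition~3.6]{olver2000applications}. The first step is to fix the bracket structure of the Lie algebra $\mathcal{L}:=\operatorname{span}\{X_1,X_2\}$. A short computation with $X_1=\partial_t$ and $X_2=t\,\partial_t+\sum_{i\le j}g_{ij}\frac{\partial}{\partial g_{ij}}$ shows that the $t$-derivative terms combine while the metric terms cancel, giving $[X_1,X_2]=X_1$. Hence $\mathcal{L}$ is the (unique) non-abelian two-dimensional Lie algebra, and its derived subalgebra is $[\mathcal{L},\mathcal{L}]=\operatorname{span}\{X_1\}$.

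Next I would tabulate the adjoint action of the two one-parameter subgroups using $\operatorname{Ad}(\exp(\varepsilon X_i))\,Y=\sum_{k\ge0}\tfrac{(-\varepsilon)^k}{k!}(\operatorname{ad}_{X_i})^kY$. The relation $[X_1,X_2]=X_1$ forces $(\operatorname{ad}_{X_1})^2X_2=0$ and $(\operatorname{ad}_{X_2})^kX_1=(-1)^kX_1$, so the series either truncate or sum to an exponential, yielding
\begin{align*}
\operatorname{Ad}(\exp(\varepsilon X_1))\,X_1&=X_1, & \operatorname{Ad}(\exp(\varepsilon X_1))\,X_2&=X_2-\varepsilon X_1,\\
\operatorname{Ad}(\exp(\varepsilon X_2))\,X_1&=e^{\varepsilon}X_1, & \operatorname{Ad}(\exp(\varepsilon X_2))\,X_2&=X_2.
\end{align*}

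I would then take a general generator $X=a_1X_1+a_2X_2$ and reduce it under these maps; since we classify one-dimensional subalgebras, rescaling the generator is permitted. If $a_2\neq0$, rescale so that $a_2=1$ and apply $\operatorname{Ad}(\exp(\varepsilon X_1))$, which sends $X$ to $(a_1-\varepsilon)X_1+X_2$; choosing $\varepsilon=a_1$ annihilates the $X_1$ component and reduces $X$ to $X_2$. If $a_2=0$, then $X$ is a nonzero multiple of $X_1$, and both adjoint maps keep it proportional to $X_1$, so its class is represented by $X_1$. This produces the two candidate representatives $X_1$ and $X_2$.

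The only genuinely non-formal step --- and the crux of the argument --- is to confirm that these two classes are really distinct, i.e.\ that no inner automorphism conjugates $\operatorname{span}\{X_1\}$ to $\operatorname{span}\{X_2\}$. Here I would invoke the invariance of the derived subalgebra: because $[\mathcal{L},\mathcal{L}]=\operatorname{span}\{X_1\}$ is preserved by every $\operatorname{Ad}(g)$, any element conjugate to a multiple of $X_1$ must again lie in $\operatorname{span}\{X_1\}$, whereas $X_2\notin\operatorname{span}\{X_1\}$; equivalently, the projective $X_2$-coordinate modulo $[\mathcal{L},\mathcal{L}]$ is an adjoint invariant that separates the two classes. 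This shows the one-dimensional optimal system is exactly $\{X_1,X_2\}$. Finally, the two-dimensional case is immediate: $\mathcal{L}$ admits no proper two-dimensional subalgebra, so the only, and therefore optimal, representative is the whole algebra $\operatorname{span}\{X_1,X_2\}$ itself.
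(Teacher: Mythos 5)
Your proof is correct, and it takes a genuinely more self-contained route than the paper: the paper's entire proof of this proposition is a one-line citation to Example~10.1 of Hydon~\cite{hydon2000symmetry}, where the optimal system of the (unique) non-abelian two-dimensional Lie algebra is computed. Your argument is essentially the content of that cited example, but carried out explicitly and adapted to the situation at hand: you verify the bracket $[X_1,X_2]=X_1$ for the concrete vector fields of Theorem~\ref{teo1rfn} (which is exactly what justifies invoking the textbook example and which the paper leaves implicit), you tabulate the adjoint action $\operatorname{Ad}(\exp(\varepsilon X_1))X_2=X_2-\varepsilon X_1$, $\operatorname{Ad}(\exp(\varepsilon X_2))X_1=e^{\varepsilon}X_1$, and you reduce a general generator $a_1X_1+a_2X_2$ to one of the two representatives. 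A particular strength of your write-up is the inequivalence step: many presentations stop after producing candidate representatives, whereas you prove that $\operatorname{span}\{X_1\}$ and $\operatorname{span}\{X_2\}$ are not conjugate by noting that every inner automorphism preserves the derived subalgebra $[\mathcal{L},\mathcal{L}]=\operatorname{span}\{X_1\}$, which $X_2$ does not belong to; this is cleaner and more invariant than reading the claim off the adjoint table. The trade-off is the usual one: the paper's citation buys brevity, while your computation buys completeness and makes the proposition verifiable without consulting an external source.
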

\begin{proof}
See example $10.1$ in~\cite{hydon2000symmetry}.
\end{proof}
\begin{observation} The Ricci  flow viewed as a mapping of a suitable jet space is analytic. Thus, we have the guarantee that the symmetries obtained through the infinitesimal criterion are the maximum continuous symmetry group, see Corollary 2.74 in~\cite{olver2000applications}.
\end{observation}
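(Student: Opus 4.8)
The plan is to verify directly the three hypotheses under which Olver's Corollary~2.74 guarantees that the connected group generated by the infinitesimal symmetries of Theorem~\ref{teo1rfn} is the \emph{entire} connected symmetry group: analyticity of the defining map, the maximal rank condition, and local solvability. Writing the flow as $\Delta_{\alpha\beta}=2R_{\alpha\beta}+\partial_t g_{\alpha\beta}$, I would first record that $\Delta$ is real-analytic on the open subset of the jet space where $\det(g_{ij})>0$. Indeed, from \eqref{riccorex} the components $R_{\alpha\beta}$ are polynomial in $g_{ij}$, in its first and second spatial derivatives, and in the entries $g^{ij}$; since the $g^{ij}$ are rational functions of the $g_{ij}$ whose denominator $\det(g)$ never vanishes for a Riemannian metric, the whole expression---and hence $\Delta$---is analytic there. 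This is precisely the analyticity hypothesis invoked in the Observation.

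Next I would establish the maximal rank condition. Since $R_{\alpha\beta}$ is independent of the time derivatives of the metric, differentiating with respect to the highest-order coordinate gives $\partial\Delta_{\alpha\beta}/\partial(\partial_t g_{\mu\nu})=X_{\alpha\beta}^{\mu\nu}$, which by \eqref{tenxi4ind} is nonsingular on symmetric tensors (see also \eqref{definx3} and \eqref{dermetex}). Hence the Jacobian of $\Delta$ already attains its maximal rank $l$ on $\mathscr S_{\Delta}$, so that zero is a regular value---exactly the standing assumption made in Section~2---and $\mathscr S_{\Delta}$ is a smooth analytic submanifold. The only genuinely delicate point is \textbf{local solvability}: one must exhibit, through every admissible point of $\mathscr S_{\Delta}$, an actual solution of the flow realizing the prescribed $2$-jet. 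This cannot be read off from the Cauchy--Kovalevskaya theorem directly, because the Ricci flow is only weakly parabolic---its linearization has a kernel coming from the diffeomorphism invariance expressed by the generators $X_{k+2}$---so it is not in Cauchy--Kovalevskaya normal form.

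To overcome this obstacle I would invoke DeTurck's trick: adding a Lie-derivative gauge term yields the strictly parabolic system $\partial_t g=-2\,Ric_{g}+\mathcal L_{W}g$, whose short-time existence gives a solution through initial data carrying any prescribed admissible spatial $2$-jet at the base point; pulling back by the associated time-dependent family of diffeomorphisms returns a solution of the Ricci flow itself realizing that jet, so that the constraint $2R_{\alpha\beta}+\partial_t g_{\alpha\beta}=0$ is met at the chosen point. This provides local solvability, and combined with analyticity and the maximal rank condition it makes the system nondegenerate in Olver's sense. Corollary~2.74 then applies and identifies the connected symmetry group with the group generated by the Lie algebra of Theorem~\ref{teo1rfn}, so that no further continuous symmetries exist. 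I expect this third step to be the main obstacle, precisely because the gauge degeneracy of the flow forces one to pass through the DeTurck reduction rather than argue solvability by a direct normal-form argument.
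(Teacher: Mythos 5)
Your reconstruction of what the Observation tacitly requires is more honest than the paper's own justification: the paper simply asserts analyticity and cites Olver's Corollary 2.74, whereas that corollary needs, beyond analyticity, the nondegeneracy package (maximal rank together with local solvability, which Olver extracts from Cauchy--Kovalevskaya under a normality hypothesis). Your first two checks are correct and match what the paper leaves implicit: $\Delta_{\alpha\beta}=2R_{\alpha\beta}+\partial_t g_{\alpha\beta}$ is real-analytic where $\det(g_{ij})>0$, and since $R_{\alpha\beta}$ contains no $t$-derivatives, $\partial\Delta_{\alpha\beta}/\partial(\partial_t g_{\mu\nu})=X^{\mu\nu}_{\alpha\beta}$ is an invertible block, so $\Delta$ has maximal rank on all of $\mathscr{S}_{\Delta}$. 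You are also right that every direction is characteristic (the spatial symbol of $R_{\alpha\beta}$ annihilates tensors of the form $\xi_\alpha v_\beta+\xi_\beta v_\alpha$, and there are no $\partial_t^2$ terms), so normality fails; this is precisely the point the paper passes over in silence.

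The genuine gap is your third step: DeTurck's trick cannot deliver local solvability in Olver's sense, because that notion is strictly stronger than solvability of the initial-value problem --- and for the Ricci flow written as $\Delta=0$ on the second jet space it is actually \emph{false}. Olver requires a solution through \emph{every} point of $\mathscr{S}_{\Delta}\subset J^2$, and a point of $J^2$ carries the mixed coordinates $\partial_t\partial_i g_{\mu\nu}$ and $\partial_t^2 g_{\mu\nu}$, which the equation $\Delta=0$ leaves completely free. For an actual solution, however, $\partial_t\partial_i g_{\alpha\beta}=-2\,\partial_i R_{\alpha\beta}$, and the twice-contracted Bianchi identity $\nabla^{\alpha}R_{\alpha\beta}=\tfrac12\nabla_{\beta}R$ imposes $n$ linear relations on the quantities $\partial_i R_{\alpha\beta}$ whose right-hand sides depend only on the $2$-jet of $g$. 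Concretely, take the point of $\mathscr{S}_{\Delta}$ with $g_{\mu\nu}=\delta_{\mu\nu}$, all spatial first and second derivatives zero (hence $R_{\alpha\beta}=0$ and $\partial_t g_{\alpha\beta}=0$), $\partial_t\partial_1 g_{11}=c\neq 0$, and all other second derivatives zero. Any solution through this point would have to satisfy $\sum_{\alpha}\partial_t\partial_{\alpha} g_{\alpha 1}=\tfrac12\sum_{\alpha}\partial_t\partial_1 g_{\alpha\alpha}$, i.e. $c=c/2$, which is impossible. DeTurck's theorem prescribes only the initial metric $g(\cdot,t_0)$, after which all mixed jet components of the resulting solution are determined and automatically Bianchi-compatible, so it can never reach such points; no existence theorem can, since none exists to be found. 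Consequently the hypotheses behind Corollary 2.74 genuinely fail for the Ricci flow in this formulation; a rigorous version of the Observation would have to pass to the prolonged (completed) system, in which the Bianchi-derived relations such as the one above are adjoined to $\Delta=0$, and verify nondegeneracy there, or argue maximality of the symmetry algebra by some other mechanism. As it stands, your step 3 and the paper's one-line appeal to analyticity leave the same hole; your write-up at least makes the hole visible.
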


\section{The Ricci flow for particular families of metrics.}
\subsection{The Einstein equations}
In this section, we obtain the Lie point symmetries of the vacuum Einstein equations retrieving them from the ones of the Ricci flow.
\begin{prop}\label{einvacuo}
The Lie algebra of the Lie point symmetries of the vacuum Einstein equations in $n-$dimensional manifold is spanned by the vectors:
\begin{eqnarray*}
\begin{aligned}
    X_1=&\sum_{1\leq i\leq j\leq n} g_{ij}\frac{\partial}{\partial g_{ij}},\\
    X_{k+1}=&\xi^k\frac{\partial}{\partial x^k}-\sum_{1\leq i\leq j\leq n} \left( g_{ki}\frac{\partial\xi^k}{\partial x^j}+g_{kj}\frac{\partial\xi^k}{\partial x^i}\right)\frac{\partial}{\partial g_{ij}},
\end{aligned}
\end{eqnarray*}
where $\xi^1, \cdots ,\xi^n$ are arbitrary smooth functions of $\{x^1,\cdots, x^n\}$, $k\in\{1,\cdots,n\}$.
\end{prop}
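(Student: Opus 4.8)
The plan is to mirror the proof of Theorem~\ref{teo1rfn}, exploiting the fact that the vacuum Einstein system $R_{\alpha\beta}=0$ is precisely the Ricci flow $R_{\alpha\beta}+\tfrac12\partial_t g_{\alpha\beta}=0$ read in the static regime, where there is no $t$ at all. Consequently the infinitesimal generator loses its temporal component and takes the form $X=\xi^\mu(x,g)\,\partial_{x^\mu}+\eta_{(\mu\nu)}(x,g)\,\partial_{g_{\mu\nu}}$, and the condition to impose is $\operatorname{pr}^{(2)}X\big[R_{\alpha\beta}\big]=0$ on solutions, with $R_{\alpha\beta}$ the very same expression~\eqref{riccorex}.

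First I would set up the second prolongation and the linearized condition exactly as in~\eqref{condlineari}, simply deleting the $\tfrac12\eta_{\alpha\beta t}$ term and every occurrence of $\xi^t$, $\partial_t$, and $\partial_t g$ in~\eqref{pron1ab}--\eqref{pron2ab}. Because the Ricci tensor is identical in both problems, every determining equation extracted in the proof of Theorem~\ref{teo1rfn} from purely spatial monomials---those coming from the $\partial g\,\partial\partial g$, $\partial\partial g$, $\partial g\,\partial g$ and $\partial g$ groupings---survives, while the equations that were genuinely temporal disappear. Running the same chain of deductions then forces $\xi^\mu=\xi^\mu(x)$ (from the analogue of~\eqref{eqxi1}), the structure $\eta_{\alpha\beta}=-\big(g_{\alpha\kappa}\partial_\beta\xi^\kappa+g_{\beta\kappa}\partial_\alpha\xi^\kappa\big)+F_{\alpha\beta}$ (the analogues of~\eqref{eqalrhodif3} and~\eqref{etaaa05}), and finally $F_{\alpha\beta}=\lambda\,g_{\alpha\beta}$ with the additive constant $C_{\alpha\beta}$ killed by the $\partial g\,\partial g$ equation exactly as before.

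The one step that must be handled with care---and which I expect to be the main obstacle---is the scaling degree of freedom. In the flow the equation~\eqref{eqetatgg1} tied the conformal factor of $\eta_{\alpha\beta}$ to $\partial_t\xi^t$, yielding $F_{\delta\delta}=\partial_t\xi^t g_{\delta\delta}$; here there is no $\xi^t$ to absorb it, so one must verify that the corresponding equation leaves the conformal constant $\lambda$ free rather than forcing $\lambda=0$. This is guaranteed a priori by the scale invariance of the Ricci tensor, $R_{cg}=R_{g}$ for every constant $c>0$, which makes $\eta_{\alpha\beta}=\lambda g_{\alpha\beta}$ (with $\xi\equiv0$) an exact symmetry; concretely, the second-derivative contributions that in the flow cancelled pairwise against the $\partial_t\xi^t$ terms still cancel once the temporal piece is removed, so $\lambda$ remains unconstrained.

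Collecting the output, a generator is admissible if and only if $X=\xi^k(x)\,\partial_{x^k}-\big(g_{ki}\partial_{x^j}\xi^k+g_{kj}\partial_{x^i}\xi^k-\lambda g_{ij}\big)\partial_{g_{(ij)}}$ with $\lambda$ constant, which is spanned by the dilation $X_1=\sum_{i\le j}g_{ij}\partial_{g_{ij}}$ (take $\xi\equiv0$, $\lambda=1$) and the covariance generators $X_{k+1}$ (take $\lambda=0$). Sufficiency can be seen more cheaply through the canonical-form method of Section~2: writing a generic Theorem~\ref{teo1rfn} generator in its characteristic $Q_{ij}$ and imposing the static constraint $\partial_t g_{ij}=0$ annihilates the $(a_0+a_1t)\partial_t g_{ij}$ term, collapsing $X_1,X_2$ of the flow onto the single dilation and leaving the $X_{k+2}$ untouched; the determining-equation analysis above is then only needed to confirm that no further symmetries appear.
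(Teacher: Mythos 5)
Your route is not the paper's: the paper never re-derives determining equations for the static system; it writes a generic generator of Theorem~\ref{teo1rfn} in characteristic form and restricts it to the set $\{\partial_t g_{ij}=0\}$, exactly as in Example~\ref{ex:1} (this is what your closing remark sketches), with completeness resting in effect on agreement with Marchildon's classification. Your main argument --- rerun the proof of Theorem~\ref{teo1rfn} after deleting the $\tfrac12\eta_{\alpha\beta t}$ term and every occurrence of $\xi^t$, $\partial_t$, $\partial_t g$ --- has a genuine gap, and it is not the one you flagged. In the flow proof the determining equations were extracted \emph{after} substituting $\partial_t g_{\alpha\beta}=-2R_{\alpha\beta}$ on the solution manifold; under this substitution the temporal coefficient $\eta_{\alpha\beta t}$ of \eqref{pron1ab} injects purely \emph{spatial} second-derivative terms into the determining equations: in \eqref{indsengder1} these are the $\partial_t\xi^t$ blocks together with the blocks $g^{\gamma\delta}\partial\eta_{\alpha\beta}/\partial g_{(\rho\sigma)}+g^{\rho\sigma}\partial\eta_{\alpha\beta}/\partial g_{(\gamma\delta)}-g^{\delta\sigma}\partial\eta_{\alpha\beta}/\partial g_{(\gamma\rho)}-g^{\sigma\delta}\partial\eta_{\alpha\beta}/\partial g_{(\rho\gamma)}$. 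Your deletion removes precisely these blocks, so it is false that ``every determining equation extracted from purely spatial monomials survives.'' In the static problem the correct replacement for those blocks is a Hadamard multiplier: since $R_{\alpha\beta}=0$ ties the second spatial derivatives to one another, the symmetry condition is $\operatorname{pr}^{(2)}X[R_{\alpha\beta}]=\Lambda_{\alpha\beta}^{\mu\nu}R_{\mu\nu}$ for unknown functions $\Lambda$, i.e.\ vanishing \emph{modulo} the constraint. Setting $\Lambda=0$, as your procedure implicitly does, imposes the strictly stronger off-shell condition $\operatorname{pr}^{(2)}X[R_{\alpha\beta}]=0$.

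That stronger condition is failed by the very generators you must recover: for a covariance generator $X_{k+1}$ one has $\operatorname{pr}^{(2)}X[R_{\alpha\beta}]=-R_{\kappa\beta}\,\partial_\alpha\xi^\kappa-R_{\alpha\kappa}\,\partial_\beta\xi^\kappa$, which vanishes on solutions but not identically. Concretely, the paper's determining equation from the case $\rho=\sigma=\alpha$, $\gamma=\delta=\hat\delta$, $\alpha\neq\hat\delta\neq\beta$ of \eqref{indsengder1}, namely $-g^{\alpha\hat\delta}\partial_\beta\xi^{\hat\delta}-g^{\hat\delta\alpha}\,\partial\eta_{\alpha\beta}/\partial g_{\alpha\hat\delta}=0$ --- the equation that creates the Lie-derivative structure \eqref{eqrhobetadif2} and hence \eqref{eqalrhodif3} --- owes its second term entirely to the deleted blocks. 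After your deletion the same case instead reads $-g^{\alpha\hat\delta}\partial_\beta\xi^{\hat\delta}-g^{\hat\delta\hat\delta}\,\partial\eta_{\alpha\beta}/\partial g_{\alpha\alpha}=0$ (the spatial term no longer cancels against its multiplier partner); evaluated on a covariance generator, where $\partial\eta_{\alpha\beta}/\partial g_{\alpha\alpha}=-\partial_\beta\xi^\alpha$, it becomes $g^{\hat\delta\hat\delta}\partial_\beta\xi^\alpha-g^{\alpha\hat\delta}\partial_\beta\xi^{\hat\delta}=0$ for all metrics, which at diagonal metrics forces $\partial_\beta\xi^\alpha=0$. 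So your chain of deductions erases the diffeomorphism symmetries rather than deriving them, and cannot yield Proposition~\ref{einvacuo}. Ironically, the step you singled out as delicate --- survival of the scaling constant $\lambda$ --- is the harmless one, since $R_{cg}=R_g$ makes the dilation an off-shell symmetry. To repair the approach you must carry the multipliers $\Lambda_{\alpha\beta}^{\mu\nu}$ throughout (Marchildon's route), or solve $R_{\alpha\beta}=0$ for a set of dependent second derivatives and substitute before extracting coefficients; alternatively, promote your final remark to the whole proof, which is exactly what the paper does.
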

\begin{proof}
Let
\begin{equation*}
X=c_1X_1+c_2X_2+X_3+\cdots +X_{n+2},
\end{equation*}
where $c_1,c_2\in\mathds{R}$, be a symmetry of the Theorem~\ref{teo1rfn}. We write $X$ in its canonical form:
\begin{equation*}
Q_X=\sum_{1\leq k\leq l\leq n}Q_{kl}\left(t,x,g_{ps},\frac{\partial g_{ps}}{\partial t},\frac{\partial g_{ps}}{\partial x^r}\right)\frac{\partial}{\partial g_{kl}},
\end{equation*}
and  employ the method illustrated in Example~\ref{ex:1}. That being so, we need to consider the following restriction: $\dfrac{\partial g_{ij}}{\partial t} =0$, that is $g_{ij} = g_{ij}(x^1,\cdots,x^n)$. So,  the characteristic takes the form
\begin{equation*}
	\hat{Q}_{kl} = c_2 g_{kl}(x)-\sum_{1\leq s\leq n}\xi^s(x)\frac{\partial g_{kl}}{\partial x^s}-\sum_{1\leq s\leq n}\left(g_{sk}\frac{\partial\xi^s}{\partial x^l}+g_{sl}\frac{\partial\xi^s}{\partial x^k}\right),
\end{equation*}
or equivalently, 
\begin{equation*}
   X=\xi^k\frac{\partial}{\partial x^k}-\sum_{1\leq i \leq j\leq n}\left(c_2 g_{ij}+g_{ki}\frac{\partial \xi^k}{\partial x^j}+g_{kj}\frac{\partial\xi^k}{\partial x^i}\right)\frac{\partial}{\partial g_{ij}},
\end{equation*}
from where we arrive to the desired Proposition.
\end{proof}
\begin{observation}
These symmetries in $n$ dimensions were obtained first by Marchildon in~\cite{marchildon1998lie}. \end{observation}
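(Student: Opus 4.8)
The plan is to apply the restriction method of Example~\ref{ex:1}, exploiting the fact that the vacuum Einstein equations $R_{\alpha\beta}=0$ are exactly the Ricci flow $2R_{\alpha\beta}+\partial_t g_{\alpha\beta}=0$ specialized to the \emph{static} sector $\partial_t g_{ij}=0$. Indeed, in vacuum the full equations $R_{\alpha\beta}-\tfrac12 R\,g_{\alpha\beta}=0$ trace to $R=0$ and hence reduce to $R_{\alpha\beta}=0$, so a solution of the Einstein system is precisely a time-independent solution of the Ricci flow. The constraint defining the sub-problem is therefore $S=\{\partial_t g_{ij}=0\}$, equivalently $g_{ij}=g_{ij}(x^1,\dots,x^n)$, and the goal is to read off the restriction of the Lie algebra of Theorem~\ref{teo1rfn} to $S$.

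First I would take a generic symmetry $X=c_1X_1+c_2X_2+X_3+\cdots+X_{n+2}$ of Theorem~\ref{teo1rfn} and write it in canonical form $Q_X=\sum_{k\leq l}Q_{kl}\,\partial/\partial g_{kl}$ exactly as in Example~\ref{ex:1}, so that
$$Q_{kl}=c_2 g_{kl}-\sum_{s}\xi^s\frac{\partial g_{kl}}{\partial x^s}-\sum_{s}\Big(g_{sk}\frac{\partial\xi^s}{\partial x^l}+g_{sl}\frac{\partial\xi^s}{\partial x^k}\Big)-(c_1+c_2 t)\frac{\partial g_{kl}}{\partial t}.$$
Imposing the defining constraint of $S$ makes the last term vanish identically, and with it the parameter $c_1$: the pure time-translation $X_1$ acts trivially on static metrics and contributes nothing, while $X_2$ keeps only its metric-scaling part. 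What survives is the $t$-free characteristic $\hat{Q}_{kl}=c_2 g_{kl}-\sum_{s}\xi^s\,\partial g_{kl}/\partial x^s-\sum_{s}\big(g_{sk}\,\partial\xi^s/\partial x^l+g_{sl}\,\partial\xi^s/\partial x^k\big)$, which depends only on the Einstein variables $(x,g_{ij})$. Rewriting this in standard (non-canonical) form, the term $-\xi^s\partial_{x^s}g_{kl}$ recombines into the generator $\xi^k\partial/\partial x^k$ and the remaining pieces separate into the scaling field $c_2\sum_{i\leq j}g_{ij}\,\partial/\partial g_{ij}$ and the diffeomorphism fields, yielding precisely $X_1$ and $X_{k+1}$ of the statement.

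The main obstacle is \emph{completeness}: the computation above exhibits $X_1,X_{k+1}$ as symmetries, but one must rule out the possibility that passing to the smaller solution set $S$ enlarges the admissible symmetry algebra. In principle a constrained system can admit point symmetries that do not lift to the ambient one, so the crux is to show that every Lie point symmetry of $R_{\alpha\beta}=0$ arises as the static restriction of a Ricci-flow symmetry already classified in Theorem~\ref{teo1rfn}. I would settle this by repeating the index-by-index extraction of determining equations from the proof of Theorem~\ref{teo1rfn}, now with all $\partial_t g$ terms absent from the outset, and checking that the surviving equations force exactly the $c_1$-independent static form of $\hat{Q}_{kl}$ above. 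Since the $\partial_t g$ terms were precisely what produced the determining equations isolating the temporal generators, their removal should leave the spatial-diffeomorphism and scaling determining equations intact and nothing more; the agreement of the resulting algebra with Marchildon's independent derivation, analogous to the match with Cimpoiasu--Constantinescu noted in Example~\ref{ex:1}, then serves as a consistency check on the method.
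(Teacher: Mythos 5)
Your proposal follows the paper's own proof of Proposition~\ref{einvacuo} essentially verbatim: take a generic element $c_1X_1+c_2X_2+X_3+\cdots+X_{n+2}$ of the algebra of Theorem~\ref{teo1rfn}, write it in canonical form, impose the static restriction $\partial_t g_{ij}=0$ so that the $(c_1+c_2t)\,\partial_t g_{kl}$ term (and with it $c_1$) drops out, and read off the surviving scaling and diffeomorphism generators. Your computation and conclusion coincide with the paper's.

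The one place you go beyond the paper is the completeness discussion in your last paragraph, and you are right to flag it: as written, the restriction argument is only a sufficiency statement, exhibiting a subalgebra of symmetries of $R_{\alpha\beta}=0$, and the paper's proof does not itself rule out point symmetries of the static system that fail to lift to symmetries of the flow. In the paper's logic this gap is closed not by redoing the determining equations, as you propose, but by the Observation itself: Marchildon's independent, from-scratch classification of the Lie point symmetries of the vacuum Einstein equations in $n$ dimensions produces exactly this algebra, so the agreement you describe as a ``consistency check'' is in fact the certification of maximality. One small caution on your opening remark: the reduction of $R_{\alpha\beta}-\tfrac12 R\,g_{\alpha\beta}=0$ to $R_{\alpha\beta}=0$ by tracing requires $n\neq 2$ (in two dimensions the Einstein tensor vanishes identically), whereas the paper simply takes the vacuum equations to be $R_{\alpha\beta}=0$, i.e.\ the static Ricci flow, from the outset.
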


In the following sections, we apply this method to find the symmetries of the Ricci flow for particular metric families.

\subsection{Ricci Flow on Warped Product Manifolds}\label{subsec:wrpd1}

Given two Riemannian manifolds $\left(B^{n}, g_{B}\right)$, denoted as the base, and $\left(F^{m}, g_{F}\right)$, denoted as the fiber, and a positive smooth warping function $\varphi$ on $B^{n}$, we consider on the product manifold $B^{n} \times F^{m}$ the warped metric $g=\pi_1^{*} g_{B}+(\varphi \circ \pi_1)^{2} \pi_2^{*} g_{F}$, where $\pi_1$ and $\pi_2$ are the natural projections on $B^{n}$ and $F^{m}$, respectively. Under these conditions, the product manifold is called the \textit{warped product} of $B^n$ and $F^m$. 
 
We will study the Ricci flow on the warped product $M=B^n\times F^m$ with 
\begin{equation*}
	 (B^n,g_B)=\left(\mathds{R}^n,\frac{1}{\psi^2(x)}dx^i\otimes dx^i\right),
\end{equation*}
 and by abuse of notation, we write
\begin{equation*}
g=\frac{1}{\psi^2(x)}dx^i\otimes dx^i+\varphi^2(x)g_F(y),
\end{equation*}
where $\psi$ and $\varphi$ being positive smooth functions.
These kinds of products have been extensively studied, see for example, Angenen~\cite{angenent2004example}, Ivey~\cite{ivey1994ricci}, Oliynyk and Woolgar~\cite{oliynyk2007rotationally}, and Ma and Xu~\cite{ma2011ricci}.

We know that the Ricci flow preserves the isometry group of the initial Riemannian manifold. For warped metrics, this means that the Ricci flow preserves the warped structure (see~\cite{oliynyk2007rotationally}). As a consequence, we can write
\begin{equation*}\label{metors}
  g=\frac{1}{\psi^2(x,t)}dx^i\otimes dx^i+\varphi^2(x,t) g_F(y,t).
\end{equation*}
Consider now the smooth fields $W_1, W_2$ on $F^m$, it follows that (see~\cite{bishop1969manifolds,o1983semi})
\begin{eqnarray*}
Ric_M(\tilde W_1,\tilde W_2)=Ric_{g_F}(W_1,W_2)-\left(\frac{\Delta_B \varphi}{\varphi}+\frac{|\nabla_B \varphi|^2}{\varphi^2}(m-1)\right)g(\tilde W_1,\tilde W_2),
\end{eqnarray*}
and by the Ricci flow we have
\begin{eqnarray*}
Ric_M(\tilde W_1,\tilde W_2)=-\frac{\partial\varphi}{\partial t}\varphi g_F(W_1,W_2).
\end{eqnarray*}
Thus
\begin{eqnarray*}
Ric_{g_F}(W_1,W_2)=\left(-\varphi\frac{\partial\varphi}{\partial t}+\varphi^2\left(\frac{\Delta_B \varphi}{\varphi}+\frac{|\nabla_B \varphi|^2}{\varphi^2}(m-1)\right)\right)g_F(W_1,W_2).
\end{eqnarray*}
Finally, we have
\begin{equation*}
  -\varphi\frac{\partial\varphi}{\partial t}+\varphi^2\left(\frac{\Delta_B \varphi}{\varphi}+\frac{|\nabla_B \varphi|^2}{\varphi^2}(m-1)\right)=\mu.
\end{equation*} 
with $\mu$ depending only on t. In particular, $(F^m,g_F)$ must be an Einstein manifold for each $t$.  This motivates us to study the case in which the metric over time is of the form:
\begin{equation}\label{eqro1}
  g=\frac{1}{\psi^2(x,t)}dx^i\otimes dx^i+\varphi^2(x,t) g_{can}.
\end{equation}
where $(F^m,g_{can})$ is an Einstein manifold. 

\begin{teor}\label{teotor}
The Lie algebra of the Lie point symmetries of the Ricci Flow of~\eqref{eqro1}, with $(F^m,g_{can})$ being an Einstein manifold non-homothetic to the euclidean metric, is spanned by:

\begin{eqnarray*}
\begin{aligned}
X_1&=\frac{\partial}{\partial t},\\
X_2&=t\frac{\partial}{\partial t}-\frac{\psi}{2}\frac{\partial}{\partial \psi}+\frac{\varphi}{2}\frac{\partial}{\partial \varphi},\\
X_{k+2}&=\xi^k(x)\frac{\partial}{\partial x}+\psi\frac{\partial\xi^k}{\partial x^k}\frac{\partial}{\partial \psi},
\end{aligned}
\end{eqnarray*}
where $\xi^k(x)$ are smooth functions of $\mathds{R}^n$ with $k\in\{1,\dots,n\}$, such that
\begin{equation*}
    \frac{\partial\xi^i}{\partial x^j}+\frac{\partial \xi^j}{\partial x^i}=0\quad\quad\text{and}\quad\quad\frac{\partial\xi^i}{\partial x^i}-\frac{\partial\xi^j}{\partial x^j}=0,
\end{equation*}
with $i\neq j$.
\end{teor}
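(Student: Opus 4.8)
The plan is to apply the canonical-form restriction method of Example~\ref{ex:1} and Proposition~\ref{einvacuo}, now on the $(n+m)$-dimensional product $B^n\times F^m$. I relabel the spatial coordinates of Theorem~\ref{teo1rfn} as $x^1,\dots,x^n$ (base) and $y^1,\dots,y^m$ (fibre), write $z=(x,y)$, and start from the generic symmetry
\[
X=(c_1+c_2t)\frac{\partial}{\partial t}+\sum_s\xi^s\frac{\partial}{\partial z^s}-\sum_{i\le j}\Big(g_{ki}\frac{\partial\xi^k}{\partial z^j}+g_{kj}\frac{\partial\xi^k}{\partial z^i}-c_2g_{ij}\Big)\frac{\partial}{\partial g_{ij}},
\]
with the $\xi^s$ arbitrary. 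Writing $X$ in its characteristic form $Q_X=\sum_{i\le j}Q_{ij}\,\partial/\partial g_{ij}$ and imposing the ansatz~\eqref{eqro1} defines the submanifold $S=\{\,g_{ij}=\psi^{-2}\delta_{ij}\ (\text{base}),\ g_{\alpha\beta}=\varphi^2(g_{can})_{\alpha\beta}\ (\text{fibre}),\ g_{i\alpha}=0\,\}$. On $S$ the only reduced dependent variables are $\psi$ and $\varphi$, so the restricted characteristic must take the form $Q_X|_S=Q_\psi\,\partial/\partial\psi+Q_\varphi\,\partial/\partial\varphi$, and matching $Q_X|_S$ block-by-block against the metric entries produces the determining equations.

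First I would treat the base block. For $i\neq j$ the entry $g_{ij}=0$ forces $Q_{ij}|_S=0$, which collapses to $\partial_{x^j}\xi^i+\partial_{x^i}\xi^j=0$, the antisymmetry condition of the statement. For $i=j$ the matching reads $Q_{ii}|_S=-2Q_\psi/\psi^3$; since every diagonal base entry equals the same $\psi^{-2}$, the right-hand side cannot depend on $i$, forcing $\partial_{x^i}\xi^i=\partial_{x^j}\xi^j$, the second condition of the statement. Solving that same relation for $Q_\psi$ and unwinding the characteristic $Q_\psi=\eta^\psi-\xi^t\psi_t-\sum\xi^i\psi_{x^i}$ gives $\eta^\psi=\psi\,\partial_{x^k}\xi^k-\tfrac{c_2}{2}\psi$, which is exactly the $\partial/\partial\psi$-component of $X_2$ (the $-\tfrac{c_2}{2}\psi$ term) together with that of $X_{k+2}$ (the $\psi\,\partial_{x^k}\xi^k$ term).

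Next I would use the cross block. Setting $Q_{i\alpha}|_S=0$ separates, since $\psi,\varphi$ depend only on $(x,t)$ while $g_{can}$ depends only on $y$, into $\partial_{y^\alpha}\xi^i=0$ and $(g_{can})_{\gamma\alpha}\partial_{x^i}\xi^\gamma=0$; nondegeneracy of $g_{can}$ then yields $\xi^i=\xi^i(x)$ and $\xi^\gamma=\xi^\gamma(y)$. For the fibre block I would observe that, after dividing by $\varphi$, the terms carrying the fibre field $\zeta:=\sum_\gamma\xi^\gamma\,\partial/\partial y^\gamma$ reassemble precisely into the Lie derivative $(\mathcal L_\zeta g_{can})_{\alpha\beta}$, so the fibre matching condition becomes the single tensor equation
\[
\mathcal L_\zeta g_{can}=\frac{c_2\varphi-2\eta^\varphi}{\varphi}\,g_{can},
\]
identifying $\zeta$ as a conformal Killing field of $g_{can}$ with factor $2\lambda:=(c_2\varphi-2\eta^\varphi)/\varphi$.

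The hard part is closing this last equation, and it is exactly where the Einstein/non-homothetic hypothesis enters. Its left-hand side depends only on $y$ while the factor on the right depends only on $(x,t)$, so both equal a common constant $\lambda$, making $\zeta$ a homothetic Killing field. A proper homothety ($\lambda\neq0$) of an Einstein fibre would force $(F^m,g_{can})$ to be homothetic to the euclidean metric (tracing the equation gives $R=0$, and a nonzero homothety on a Ricci-flat manifold flattens it); the standing hypothesis excludes this, so $\lambda=0$. This simultaneously pins down $\eta^\varphi=\tfrac{c_2}{2}\varphi$ --- the $\partial/\partial\varphi$-component of $X_2$, with no contribution for the purely spatial generators --- and makes $\zeta$ a genuine Killing field of $g_{can}$. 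Such a fibre isometry leaves $x,t,\psi,\varphi$ untouched, hence projects to the identity on the reduced system in $(x,t,\psi,\varphi)$ and adds nothing new; discarding it, the surviving generators are precisely $X_1$, $X_2$, and the family $X_{k+2}$ with $\xi^k=\xi^k(x)$ subject to the two displayed differential constraints, which establishes the theorem.
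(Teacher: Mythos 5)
Your proposal is correct, and up to the fibre block it runs parallel to the paper's own proof: both take the generic symmetry of Theorem~\ref{teo1rfn} in characteristic form, restrict it to the submanifold defined by~\eqref{eqro1}, and extract from the base-diagonal, base-off-diagonal and cross blocks exactly the same conditions (equality of the divergences $\partial\xi^i/\partial x^i$, the antisymmetry relations, the splitting $\xi^i=\xi^i(x)$, $\xi^\gamma=\xi^\gamma(y)$, and the formula for $Q_\psi$). Where you genuinely diverge is the fibre block, and your treatment is in fact more careful than the paper's. From its fibre equation~\eqref{ojoparc2} the paper argues tersely that, since $Q_\varphi$ cannot depend on $g_{can}$ and the coefficients of the canonical metric are not constant, the transport term $\xi^{n+k}\,\partial (g_{can})_{lp}/\partial x^{n+k}$ must vanish, hence $\xi^{n+k}=0$; as stated this is too strong, because the determining equation only forces the full combination $(\mathcal L_\zeta g_{can})_{lp}$ to be proportional to $(g_{can})_{lp}$, and a non-trivial Killing field of the fibre (a generic rotation of a round sphere, say) satisfies this with individually non-vanishing transport terms. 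You instead recognize the Lie-derivative structure, separate variables to get $\mathcal L_\zeta g_{can}=2\lambda g_{can}$ with $\lambda$ constant, use the Einstein hypothesis to get $\kappa\lambda=0$ and the non-homothetic-to-Euclidean hypothesis (via the Tashiro/Kobayashi--Nomizu-type theorem that a \emph{complete} Ricci-flat manifold carrying a proper homothetic field is Euclidean) to force $\lambda=0$, and finally note that genuine fibre Killing fields have identically vanishing restricted characteristic, so they act trivially on the reduced system in $(t,x,\psi,\varphi)$ and contribute nothing to the algebra of Theorem~\ref{teotor}. What the paper's shortcut buys is brevity; what yours buys is a proof that uses the stated hypothesis in a geometrically meaningful way, a correct accounting of fibre isometries, and, as a bonus, a structural explanation of Theorem~\ref{teotoreuc}: the extra generator $\varphi\,\partial/\partial\varphi$ there is precisely the $\lambda\neq 0$ mode produced by the dilation homothety of the Euclidean fibre. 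The only step you should make explicit is the completeness (or compactness) assumption on $(F^m,g_{can})$ required by the flatness theorem you invoke, since the paper's hypothesis ``non-homothetic to the euclidean metric'' is otherwise doing that work implicitly.
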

\begin{proof}
We begin by taking a symmetry of the Ricci flow 
\begin{equation*}
X=c_1X_1+c_2X_2+X_3+\cdots +X_{n+m+2},
\end{equation*}
where $c_1,c_{2}\in\mathds{R}$. We write $X$ in its canonical form:
\begin{equation*}
Q_X=\sum_{1\leq i\leq j\leq n}Q_{ij}\left(t,x,g_{ps},\frac{\partial g_{ps}}{\partial t},\frac{\partial g_{ps}}{\partial x^r}\right)\frac{\partial}{\partial g_{ij}}.
\end{equation*}

Having in mind~\eqref{eqro1}, we see that we need to apply the method on the restrictions $\displaystyle g_{ii}=\frac{1}{\psi^2(x,t)}$, $g_{i,j}=0$ for $i,j\in\{1,\cdots,n\}$ with $i\neq j$ and $g_{k+n,l+n}=\varphi(x,t)^2 (g_{can})_{ij}$ with $1\leq k\leq l\leq m$. 
Thus, we look for functions $Q_\psi $ and $Q_\varphi$ depending on $\displaystyle x,t, \varphi,\psi,\frac{\partial \varphi}{\partial x}, \frac{\partial \psi}{\partial x}, \frac{\partial \phi}{\partial t}, \frac{\partial \psi}{\partial t}$ such that
\begin{equation*}
Q_X=Q_\varphi\frac{\partial}{\partial \varphi}+Q_\psi\frac{\partial}{\partial \psi},
\end{equation*}
under the restrictions described above we get the conditions
\begin{eqnarray*}
\begin{aligned}
   -\frac{2}{\psi^3}Q_\psi &=Q_X(g_{ii})\Big|_{(x,t,\varphi,\psi)}\\
   &=\left(c_2-2\frac{\partial\xi^i}{\partial x^i}\right)\frac{1}{\psi^2}-(c_1+c_2 t)\left(\frac{-2\psi_t}{\psi^3}\right)-\xi^k\left(\frac{-2\psi_{x^k}}{\psi^3}\right),
\end{aligned}
\end{eqnarray*}
for all $i\in\{1,\dots,n\}$. Then
\begin{equation}\label{eqrefteo1}
    Q_\psi=-\left(\frac{c_2}{2}-\frac{\partial\xi^i}{\partial x^i}\right)\psi-(c_1+c_2t)\psi_t-\xi^k\psi_{x^i}.
\end{equation}
In addition, we have that

\begin{equation}\label{eqcomreftor}
    Q_\psi=-\frac{\psi^3}{2}Q_X(g_{ii})\Big|_{(x,t,\varphi,\psi)}=-\frac{\psi^3}{2}Q_X(g_{jj})\Big|_{(x,t,\varphi,\psi)},
\end{equation}
for $i,j\in\{1,\dots, n\}$. Looking~\eqref{eqrefteo1}, we can see that~\eqref{eqcomreftor} is valid if and only if 
\begin{equation*}\label{cod2torrn}
    \frac{\partial\xi^i}{\partial x^i}=\frac{\partial\xi^j}{\partial x^j},
\end{equation*}
with $i,j\in\{1,\dots, n\}$. For $i,j\in\{1,\dots,n\}$ with $i\neq j$, we have
\begin{eqnarray*}
\begin{aligned}
0=Q_X(g_{ij})\Big|_{(x,t,\varphi,\psi)}=-\frac{1}{\psi^2}\left(\frac{\partial\xi^i}{\partial x^j}+\frac{\partial\xi^j}{\partial x^i}\right),
\end{aligned}
\end{eqnarray*}
so
\begin{equation*}
    \frac{\partial\xi^i}{\partial x^j}+\frac{\partial\xi^j}{\partial x^i}=0,
\end{equation*}
for all $i\neq j$. 

For $i\in\{1,\dots,n\}$ and $l\in\{1,\dots,m\}$. We get
\begin{equation*}
    0=Q_X(g_{i,n+l})\Big|_{(x,t,\varphi,\psi)}=-\left(\frac{1}{\psi^2}\frac{\partial\xi^i}{\partial x^{l+n}}+\varphi^2\frac{\partial\xi^{l+n}}{\partial x^i}\right).
\end{equation*}
In this case, $\partial\xi^i/\partial x^{l+n}$ represents the partial derivative of the function $\xi^i$ with respect to the coordinate $l+n$ of the fiber. On the other hand, as the functions $\xi^i$  depend on the independent variables, we can conclude that
\begin{equation}\label{ojoalpar}
    \xi^i(x^1,\dots,x^n)\quad\text{and}\quad \xi^{l+n}(x^{n+1},\dots,x^{n+m}),
\end{equation}
for all $i\in\{1,\dots,n\}$ and $l\in\{1,\dots,m\}$. For $l,p\in\{1,\dots,m\}$, we obtain
\begin{eqnarray}\label{ojoparc2}
\begin{aligned}
2\varphi Q_\varphi &(g_{can})_{l,p}=Q_X(g_{n+l,n+p})\Big|_{(x,t,\varphi,\psi)}\\
=&\varphi^2\left(c_2 (g_{can})_{lp}-(g_{can})_{kl}\frac{\partial\xi^{n+k}}{\partial x^{n+p}}-(g_{can})_{kp}\frac{\partial\xi^{n+k}}{\partial x^{n+l}}\right)\\
&-2(c_1+c_2 t)\varphi\varphi_t(g_{can})_{lp}-\xi^{n+k}\varphi^2\frac{\partial (g_{can})_{lp}}{\partial x^{n+k}}-2\varphi\varphi_{x_s}\xi^s(g_{can})_{l,p},
\end{aligned}
\end{eqnarray}
in this last equation $k\in\{1,\dots,m\}$ and $s\in\{1,\dots,n\}$. Note that $Q_\varphi$ doesn't depend of $g_{can}$ (the coefficients of the canonical metric are not constant), therefore
\begin{equation*}
    \xi^{n+k}\frac{\partial (g_{can})_{lp}}{\partial x^{n+k}}=0, \quad\text{for}\,k\in\{1,\dots,m\}.
\end{equation*}
Thus, $\xi^{n+k}=0$, for $,k\in\{1,\dots,m\}$ so
\begin{equation*}
    Q_\varphi=\frac{c_2}{2}\varphi-(c_1+c_2t)\varphi_t-\xi^s\varphi_{x_s}.
\end{equation*}
Putting all of the above together
\begin{equation*}
X=\left(c_1+2c_2t\right)\frac{\partial}{\partial t}+\xi^k\frac{\partial}{\partial x^k}+c_2\frac{\partial}{\partial \varphi}+\left(\frac{\partial\xi^k}{\partial x^k}\psi-c_2\psi\right)\frac{\partial}{\partial\psi},
  \end{equation*}
  with $ k\in\{1,\dots,n\}$, $\displaystyle \frac{\partial\xi^i}{\partial x^i}-\frac{\partial\xi^j}{\partial x^j}=0$ and $\displaystyle\frac{\partial\xi^i}{\partial x^j}+\frac{\partial\xi^j}{\partial x^i}=0$ when $i\neq j$.
\end{proof}

Now we turn our attention to the special case where the fiber is the Euclidean space endowed with the canonical metric.

\begin{teor}\label{teotoreuc}
The Lie algebra of the Lie point symmetries of the Ricci Flow of~\eqref{eqro1}, with $F^m$ being $\mathds{R}^m$ endowed with the Euclidean metric, is spanned by:

\begin{eqnarray*}
\begin{aligned}
X_1&=\frac{\partial}{\partial t},\\
X_2&=t\frac{\partial}{\partial t}-\frac{\psi}{2}\frac{\partial}{\partial \psi}+\frac{\varphi}{2}\frac{\partial}{\partial \varphi},\\
X_3&=\varphi\frac{\partial}{\partial \varphi},\\
X_{k+3}&=\xi^k(x)\frac{\partial}{\partial x}+\psi\frac{\partial\xi^k}{\partial x^k}\frac{\partial}{\partial \psi},
\end{aligned}
\end{eqnarray*}
where $\xi^k(x)$ are smooth functions of $\mathds{R}^n$ with $k\in\{1,\dots,n\}$, such that
\begin{equation*}
    \frac{\partial\xi^i}{\partial x^j}+\frac{\partial \xi^j}{\partial x^i}=0\quad\quad\text{and}\quad\quad\frac{\partial\xi^i}{\partial x^i}-\frac{\partial\xi^j}{\partial x^j}=0,
\end{equation*}
with $i\neq j$.
\end{teor}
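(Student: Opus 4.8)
The plan is to follow the same characteristic (canonical form) argument used in the proof of Theorem~\ref{teotor}, since the two settings differ only in the choice of fiber metric $g_{can}$, and to track precisely where the substitution $g_{can}=\delta$ (the constant Euclidean metric) changes the outcome. First I would take a generic Ricci-flow symmetry $X = c_1 X_1 + c_2 X_2 + X_3 + \cdots + X_{n+m+2}$ from Theorem~\ref{teo1rfn}, write it in canonical form $Q_X = \sum Q_{ij}\,\partial/\partial g_{ij}$, and impose the same ansatz $S$ coming from~\eqref{eqro1}: $g_{ii}=1/\psi^2$, $g_{ij}=0$ for distinct base indices, $g_{i,n+l}=0$, and $g_{n+l,n+p}=\varphi^2 (g_{can})_{lp}$. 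The tangency conditions $Q_X(g_{\mu\nu}) = \left.Q_X\right\rvert_S(\text{ansatz})$ for the base-diagonal, base-off-diagonal and mixed components are literally the same as in Theorem~\ref{teotor}; they reproduce $Q_\psi$ together with the base conditions $\partial_i\xi^j+\partial_j\xi^i=0$ and $\partial_i\xi^i=\partial_j\xi^j$, as well as the splitting~\eqref{ojoalpar} of $\xi$ into base- and fiber-dependent parts. These yield $X_1$, $X_2$ and the family $X_{k+3}$ without modification.

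The decisive step is the fiber-fiber equation~\eqref{ojoparc2}. When $(g_{can})_{lp}=\delta_{lp}$ is constant, the term $\xi^{n+k}\varphi^2\,\partial_{n+k}(g_{can})_{lp}$ vanishes identically, so --- unlike in Theorem~\ref{teotor} --- the fiber components $\xi^{n+l}$ are no longer forced to be zero. Splitting~\eqref{ojoparc2} into its off-diagonal ($l\neq p$) and diagonal ($l=p$) parts instead gives the Euclidean Killing--homothety system $\partial_{n+p}\xi^{n+l}+\partial_{n+l}\xi^{n+p}=0$ and $\partial_{n+l}\xi^{n+l}=h$, where requiring $Q_\varphi$ to be a single well-defined function on the reduced space $(x,t,\psi,\varphi)$ forces $h$ to be one and the same constant for every $l$. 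I would then read off
\begin{equation*}
 Q_\varphi = \left(\tfrac{c_2}{2}-h\right)\varphi - (c_1+c_2 t)\varphi_t - \xi^s\varphi_{x_s},
\end{equation*}
so that, relative to Theorem~\ref{teotor}, the reduced generator acquires exactly the extra free term $-h\varphi\,\partial_\varphi$, that is, the new symmetry $X_3=\varphi\,\partial_\varphi$.

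To finish, I would reconstruct the full generator from $Q_\psi$, $Q_\varphi$, $\xi^t=c_1+c_2t$ and $\xi^k$, and collect the coefficients of the independent parameters $c_1$, $c_2$, $h$ and the arbitrary $\xi^k$ to recover $X_1,\dots,X_{k+3}$ in the stated form. The main obstacle I anticipate is not the computation but the bookkeeping around the fiber vector field: one must argue that the rotational and translational parts of $\xi^{n+l}$ (the $h=0$ solutions of the Killing system) contribute nothing to $Q_\varphi$, $Q_\psi$ or the base/time components --- their $\partial_{x^{n+l}}$ directions are projected out in the reduction --- so they act trivially on the reduced system and add no genuine symmetry, while only the single homothety constant $h$ survives. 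Conceptually this is precisely the statement that, for a Ricci-flat (Euclidean) fiber, the constraint constant $\mu$ vanishes and the reduced evolution equation for $\varphi$ becomes homogeneous under $\varphi\mapsto c\,\varphi$; this homogeneity is the source of $X_3$ and is exactly what fails for a non-flat Einstein fiber, explaining the absence of $X_3$ in Theorem~\ref{teotor}.
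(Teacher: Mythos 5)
Your proposal is correct and follows essentially the same route as the paper's proof: reuse the restriction/canonical-form argument of Theorem~\ref{teotor}, observe that the constancy of the Euclidean fiber metric kills the term $\xi^{n+k}\varphi^2\,\partial_{n+k}(g_{can})_{lp}$ so the fiber components of $\xi$ survive as a Killing--homothety system, and extract the single homothety constant (your $h$, the paper's $-c_3$) as the new generator $X_3=\varphi\,\partial/\partial\varphi$. Your extra remark that the purely rotational/translational fiber Killing fields project to zero on the reduced variables $(x,t,\varphi,\psi)$ spells out a step the paper leaves implicit, but it is the same argument.
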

\begin{proof}
Following the previous proof, we have that $Q_\psi$ is still determined by the expression given by~\eqref{eqrefteo1}, $\xi^i$ with $i\in\{1,\dots,n\}$ continues to satisfy the Cauchy Riemann equations, the Eqs.~\eqref{ojoalpar} are still valid and the Eqs.~\eqref{ojoparc2} are transformed to
\begin{eqnarray*}\label{ojoparc3}
\begin{aligned}
2\varphi Q_\varphi \delta_{l,p}=&Q_X(g_{n+l,n+p})\Big|_{(x,t,\varphi,\psi)}\\
=&\varphi^2\left(c_2 (\delta_{lp}-\frac{\partial\xi^{n+l}}{\partial x^{n+p}}-\frac{\partial\xi^{n+p}}{\partial x^{n+l}}\right)\\
&-2(c_1+c_2 t)\varphi\varphi_t\delta_{lp}-2\varphi\varphi_{x_s}\xi^s\delta_{l,p},
\end{aligned}
\end{eqnarray*}
 for $l,p\in\{1,\dots,m\}$, from which it is possible to conclude that
\begin{equation*}
    Q_\varphi=\frac{c_2}{2}\varphi-(c_1+c_2t)\varphi_t-\xi^s\varphi_{x_s}+c_3\varphi.
\end{equation*}
where $c_3\in\mathds{R}$. Putting all of the above together
\begin{equation*}
X=\left(c_1+2c_2t\right)\frac{\partial}{\partial t}+\xi^k\frac{\partial}{\partial x^k}+(c_2+c_3\varphi)\frac{\partial}{\partial \varphi}+\left(\frac{\partial\xi^k}{\partial x^k}\psi-c_2\psi\right)\frac{\partial}{\partial\psi},
  \end{equation*}
  with $ k\in\{1,\dots,n\}$, $\displaystyle \frac{\partial\xi^i}{\partial x^i}-\frac{\partial\xi^j}{\partial x^j}=0$ and $\displaystyle\frac{\partial\xi^i}{\partial x^j}+\frac{\partial\xi^j}{\partial x^i}=0$ when $i\neq j$.
\end{proof}

The next Corollary is an immediate consequence of the previous Theorem.

\begin{corol}
The Lie algebra of the Lie point symmetries of the Ricci Flow on $\mathds{R}^{n}$ with conformal flow, i.e., 
\begin{equation*}
  g=\frac{1}{\psi^2(x,t)}dx^i\otimes dx^i,
\end{equation*}
 is spanned by:
\begin{eqnarray*}
\begin{aligned}
X_1&=\frac{\partial}{\partial t},\\
X_2&=t\frac{\partial}{\partial t}-\frac{\psi}{2}\frac{\partial}{\partial \psi},\\
X_3&=\xi^k(x)\frac{\partial}{\partial x}+\psi\frac{\partial\xi^k}{\partial x^k}\frac{\partial}{\partial \psi},
\end{aligned}
\end{eqnarray*}
where $\xi^k(x)$ are smooth functions of $\mathds{R}^n$ with $k\in\{1,\dots,n\}$, such that
\begin{equation*}
    \frac{\partial\xi^i}{\partial x^j}+\frac{\partial \xi^j}{\partial x^i}=0\quad\quad\text{and,}\quad\quad\frac{\partial\xi^i}{\partial x^i}-\frac{\partial\xi^j}{\partial x^j}=0,
\end{equation*}
with $i\neq j$.
\end{corol}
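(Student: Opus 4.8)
The plan is to obtain this Corollary directly from Theorem~\ref{teotoreuc} by specializing to the degenerate situation in which the warped product carries no fiber. Indeed, the conformal metric $g=\psi^{-2}(x,t)\,dx^i\otimes dx^i$ on $\mathds{R}^n$ is precisely the base block of the warped metric~\eqref{eqro1} once the fiber $F^m$ and the warping function $\varphi$ are suppressed (formally, $m=0$). Accordingly, I would re-run the restriction method used in the proof of Theorem~\ref{teotoreuc}, but now imposing only the conditions $g_{ii}=\psi^{-2}$ and $g_{ij}=0$ for $i\neq j$, $i,j\in\{1,\dots,n\}$, and carrying no dependent variable $\varphi$ in the canonical generator $Q_X$.

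First I would observe that the determining equations coming from the base block are left untouched by the absence of the fiber. The diagonal condition $-\tfrac{2}{\psi^3}Q_\psi=Q_X(g_{ii})|_{(x,t,\psi)}$ still yields $Q_\psi$ in the form~\eqref{eqrefteo1}; comparing these conditions for different indices $i,j$ still forces $\partial\xi^i/\partial x^i=\partial\xi^j/\partial x^j$; and the off-diagonal conditions $Q_X(g_{ij})|=0$ still give $\partial\xi^i/\partial x^j+\partial\xi^j/\partial x^i=0$ for $i\neq j$. These are exactly the conformal (Cauchy--Riemann type) constraints stated in the Corollary, and since they involve only the base metric they survive verbatim.

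The only structural change I would then record is that, with no fiber present, the generator $Q_X$ carries no $\partial/\partial\varphi$ component; consequently the mixed determining equations~\eqref{ojoalpar} and the fiber equations~\eqref{ojoparc2} — which in Theorem~\ref{teotoreuc} produced $Q_\varphi$ together with the extra scaling $c_3\varphi\,\partial/\partial\varphi$ — simply do not arise. Hence the generator $X_3=\varphi\,\partial/\partial\varphi$ drops out entirely and the $\tfrac{\varphi}{2}\partial/\partial\varphi$ summand of $X_2$ disappears. Collecting the surviving terms gives the generic symmetry
\begin{equation*}
X=(c_1+2c_2 t)\frac{\partial}{\partial t}+\xi^k\frac{\partial}{\partial x^k}+\left(\frac{\partial\xi^k}{\partial x^k}\psi-c_2\psi\right)\frac{\partial}{\partial\psi},
\end{equation*}
with $c_1,c_2\in\mathds{R}$ and $\xi^k=\xi^k(x)$ obeying the two constraints above. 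Decomposing this along its free parameters — $c_1$ alone yields $X_1$, the parameter $c_2$ (after rescaling) yields $X_2=t\,\partial/\partial t-\tfrac{\psi}{2}\,\partial/\partial\psi$, and the arbitrary $\xi^k$ yields $X_3$ — reproduces exactly the spanning set claimed.

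I expect no genuine obstacle here: the result is immediate once one recognizes that the base-block determining equations of Theorem~\ref{teotoreuc} are self-contained and that deleting the fiber only removes the $\varphi$-generators without relaxing any constraint. The single point deserving a line of justification is that suppressing the fiber does not \emph{create} new symmetries; but since every determining equation that remains is one already solved in the proof of Theorem~\ref{teotoreuc}, no additional freedom can appear, and the classification is therefore complete.
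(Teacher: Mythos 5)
Your proposal is correct and follows essentially the same route as the paper: the paper's proof is the one-line remark that the result follows from the first part of the proof of Theorem~\ref{teotor}, i.e., precisely the base-block computations (the expression~\eqref{eqrefteo1} for $Q_\psi$, the trace condition $\partial\xi^i/\partial x^i=\partial\xi^j/\partial x^j$, and the off-diagonal condition $\partial\xi^i/\partial x^j+\partial\xi^j/\partial x^i=0$ for $i\neq j$) that you carry out explicitly after discarding the fiber. Your framing via Theorem~\ref{teotoreuc} with the fiber suppressed is the same content, since the first part of that proof simply defers to the base-block part of the proof of Theorem~\ref{teotor}, so no genuinely different mechanism is involved.
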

\begin{proof}
The result follows immediately through the first part of the proof of Theorem~\ref{teotor}
\end{proof}

\subsection{Ricci Flow on Doubly-Warped Product Manifolds}
We now restrict our attention to flows evolving from a fixed initial metric
that is cohomogenou with respect to the natural $SO(p+1)\times SO(q+1)$ action. That is, we consider the doubly-warped
product metrics on $S^{p+1}\times S^q$ with $p\geq 2$ and $q\geq 2$, i.e.,
\begin{equation*}
    g=\chi(x,t)^2dx\otimes dx+\varphi(x,t)^2g_{S^p _{can}}+\psi(x,t)^2g_{S^q_{can}},
\end{equation*}
where $g_{S^n _{can}}$ denotes the canonical metric of the sphere of dimension $n$.
This kind of metric was extensively studied in~\cite{stolarski2019curvature}.

\begin{teor}\label{dotor}
The Lie algebra of the Lie point symmetries of the Ricci flow on $S^{p+1}\times S^{q}$ for metrics of the form
\begin{equation}\label{ditor2}
   g=\chi^2(x,t)dx\otimes dx+\varphi^2(x,t)g_{S^p_{can}}+\psi^2(x,t)g_{S^q_{can}},
\end{equation}
is spanned by:
\begin{eqnarray*}
\begin{aligned}
X_1 &=\frac{\partial}{\partial t},\\
X_2 &=2t\frac{\partial}{\partial t}+\chi\frac{\partial}{\partial \chi}+\varphi\frac{\partial}{\partial \varphi}+\psi\frac{\partial}{\partial \psi},\\
X_3 &=\xi(x)\frac{\partial}{\partial x}-\chi\frac{\partial\xi}{\partial x}\frac{\partial}{\partial\chi},
\end{aligned}
\end{eqnarray*}
with $\xi(x)$ being an arbitrary smooth function.
\end{teor}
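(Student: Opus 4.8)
The plan is to avoid rederiving determining equations and instead apply the reduction technique of Example~\ref{ex:1} and Theorem~\ref{teotor}. I begin with the generic Ricci-flow symmetry supplied by Theorem~\ref{teo1rfn} in dimension $n=p+q+1$,
\begin{equation*}
X=c_1X_1+c_2X_2+\sum_{k=1}^{n}X_{k+2},\qquad c_1,c_2\in\mathds{R},
\end{equation*}
pass to its canonical form $Q_X=\sum_{1\le i\le j\le n}Q_{ij}\,\partial/\partial g_{ij}$, and restrict it to the submanifold $S$ cut out by~\eqref{ditor2}. I split the indices into three blocks: the one-dimensional base ($x$, with $g_{xx}=\chi^2$), the $S^p$-block ($g=\varphi^2g_{S^p_{can}}$), and the $S^q$-block ($g=\psi^2g_{S^q_{can}}$). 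The objective is to find $Q_\chi,Q_\varphi,Q_\psi$, functions of $x,t,\chi,\varphi,\psi$ and their first derivatives, with $Q_X|_S=Q_\chi\,\partial/\partial\chi+Q_\varphi\,\partial/\partial\varphi+Q_\psi\,\partial/\partial\psi$, and to extract the constraints this imposes on the $\xi^k$, $c_1$, and $c_2$.

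First I would impose $Q_X(g_{xa})|_S=Q_X(g_{x\alpha})|_S=Q_X(g_{a\alpha})|_S=0$, where $a$ ranges over the $S^p$-block and $\alpha$ over the $S^q$-block. Each such off-diagonal characteristic is a sum of a $\chi^2$-term, a $\varphi^2$-term and/or a $\psi^2$-term, so functional independence of $\chi,\varphi,\psi$ forces every piece to vanish separately; this decouples the vector field, making the base component $\xi^x$ depend only on $x$, the $S^p$-components only on the $S^p$-coordinates, and the $S^q$-components only on the $S^q$-coordinates. Matching the base block $Q_X(g_{xx})|_S=2\chi Q_\chi$ then determines $Q_\chi$, and reconstructing the genuine vector field through $\eta_\chi=Q_\chi+\xi^t\chi_t+\xi^x\chi_x$ (with $\xi^t=c_1+c_2t$) collapses the derivative terms and leaves $\eta_\chi=\tfrac{c_2}{2}\chi-\chi\,\partial_x\xi^x$. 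Because the base is one-dimensional, no Cauchy--Riemann-type relations are imposed, and $\xi^x=\xi(x)$ remains arbitrary.

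The decisive step --- and the one I expect to be the main obstacle --- is the treatment of the two spherical blocks. Matching $Q_X(g_{ab})|_S=2\varphi Q_\varphi\,(g_{S^p_{can}})_{ab}$ rearranges into
\begin{equation*}
\varphi^2\,(\mathcal{L}_{\xi_p}g_{S^p_{can}})_{ab}=\lambda_p\,(g_{S^p_{can}})_{ab},
\end{equation*}
where $\xi_p$ is the $S^p$-part of the field and $\lambda_p$ involves only $x,t$; since $\xi_p$ depends only on the fiber coordinates, $\lambda_p$ is forced to be constant, so $\xi_p$ must generate a homothety of the round metric. Here the hypothesis $p\ge2$ is essential: $S^p$ is then a genuinely curved compact sphere, on which every homothety is an isometry, and --- exactly as in Theorem~\ref{teotor}, where $Q_\varphi$ cannot depend on the non-constant coefficients of $g_{S^p_{can}}$ --- the fiber component drops out of the reduced symmetry, leaving $\eta_\varphi=\tfrac{c_2}{2}\varphi$. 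This is precisely where the doubly-warped sphere differs from the flat fiber of Theorem~\ref{teotoreuc}: a flat fiber admits a nontrivial homothety and so contributes the extra generator $\varphi\,\partial/\partial\varphi$, whereas the positive curvature of $S^p$ forbids it. Running the identical argument on the $S^q$-block yields $\eta_\psi=\tfrac{c_2}{2}\psi$ and kills the $S^q$-components of $\xi$; note that both warping functions are tied to the same $c_2$, so they can only be scaled jointly.

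Finally I would assemble the surviving data. The scaling choice $c_1=0,\,c_2=2,\,\xi\equiv0$ produces $X_2=2t\,\partial/\partial t+\chi\,\partial/\partial\chi+\varphi\,\partial/\partial\varphi+\psi\,\partial/\partial\psi$; the choice $c_1=1,\,c_2=0,\,\xi\equiv0$ gives $X_1=\partial/\partial t$; and $c_1=c_2=0$ with $\xi=\xi(x)$ arbitrary gives $X_3=\xi(x)\,\partial/\partial x-\chi\,\xi'(x)\,\partial/\partial\chi$. As no additional constraints remain, these exhaust the symmetry algebra. The routine bookkeeping with the tensor $X^{\mu\nu\kappa\lambda}$ is identical to that of Theorem~\ref{teotor}; the only genuinely new input is the spherical rigidity used in the previous paragraph.
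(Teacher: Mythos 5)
Your proposal is correct and follows the same overall method as the paper: take the generic algebra of Theorem~\ref{teo1rfn} in dimension $n=p+q+1$, pass to the canonical form, restrict to the submanifold defined by~\eqref{ditor2}, and read off $Q_\chi,Q_\varphi,Q_\psi$; the paper's own proof is exactly this, compressed into a single sentence ("proceeding analogously to the proof of Theorem~\ref{teotor}"). Where you genuinely diverge is the treatment of the two spherical blocks. The paper (in the Theorem~\ref{teotor} argument it points back to, around Eq.~\eqref{ojoparc2}) argues that $Q_\varphi$ cannot depend on the non-constant coefficients of $g_{can}$, isolates the transport term $\xi^{n+k}\partial_{n+k}(g_{can})_{lp}$, and concludes $\xi^{n+k}=0$ outright --- a step that, as written, ignores the structural Lie-derivative terms and would wrongly exclude fiber Killing fields. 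Your version assembles all fiber terms into $\mathcal{L}_{\xi_p}g_{S^p_{can}}=\lambda_p\,g_{S^p_{can}}$ with $\lambda_p$ necessarily constant, invokes compactness of $S^p$ ($p\ge 2$) to force $\lambda_p=0$, and observes that Killing fields contribute trivially to the reduced characteristic. This is cleaner and more accurate: it explains why fiber isometries leave no trace in the reduced algebra rather than claiming they do not exist, and it makes transparent the contrast with the flat fiber of Theorem~\ref{teotoreuc}, where the nontrivial homothety survives as $\varphi\,\partial/\partial\varphi$. A final point in your favor: your $\eta_\chi=\tfrac{c_2}{2}\chi-\chi\,\partial_x\xi$ agrees with the stated generator $X_3=\xi\,\partial_x-\chi\,\xi'\,\partial_\chi$, whereas the paper's displayed $Q_\chi$ carries a spurious factor $\tfrac12$ on the $\partial_x\xi^1$ term, inconsistent with its own theorem statement.
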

\begin{proof}
Similarly to the proof of Theorem~\ref{teotor} we start by taking a symmetry from Theorem~\ref{teo1rfn}, 
\begin{equation*}
    X=c_1X_1+c_2X_2+\sum_{1\leq k\leq p+q+1}X_{2+k},
\end{equation*}
where $c_1,\cdots,c_{3+p+iq}\in\mathds{R}$ and expressing it in canonical form:
\begin{equation*}
    Q_X=\sum_{1\leq i\leq j \leq 1+p+q}Q_{ij}\frac{\partial}{\partial g_{ij}}.
\end{equation*}
Having in mind~\eqref{ditor2}, we see that we need to apply the method on the restrictions $\displaystyle g_{11}=\chi^2(x,t)$, $g_{1+i_1,1+j_1}=\varphi^2 (g_{S^p_{can}})_{i_1,j_1}$ for $i_1,j_1\in\{1,\cdots,p\}$, $g_{1+p+i_2,1+p+j_2}=\psi^2 (g_{S^q_{can}})_{i_2,j_2}$ for $i_2,j_2\in\{1,\cdots,q\}$  while the rest of the coefficients of the metric are identically zero. Thus, we look for functions $Q_\chi $, $Q_\varphi$ and $Q_\psi$ depending on $\displaystyle x,t$ and of the dependent variables $\chi$, $\varphi$ and $\psi$ as well as their first derivatives, such that
\begin{equation*}\label{camdotorc}
    Q=Q_\varphi\frac{\partial}{\partial\varphi}+Q_\psi\frac{\partial}{\partial\psi}+Q_\chi\frac{\partial}{\partial\chi},
\end{equation*}
under the restrictions described above, proceeding analogously to the proof of Theorem~\ref{teotor}, we get
\begin{eqnarray*}
\begin{aligned}
 Q_\chi&=-\frac{\partial\chi}{\partial t}(c_1+c_2 t)-\frac{\partial\chi}{\partial x}\xi^1(x)+c_2\frac{\chi}{2}-\frac{\chi}{2}\frac{\partial\xi^1}{\partial x},\\
  Q_\varphi&=-(c_1+c_2 t)\frac{\partial\varphi}{\partial t}-\xi^1\frac{\partial\varphi}{\partial x}+\frac{c_2}{2}\varphi,\\
    Q_\psi&=-(c_1+c_2 t)\frac{\partial\psi}{\partial t}-\xi^1\frac{\partial\psi}{\partial x}+\frac{c_2}{2}\psi.
\end{aligned}
\end{eqnarray*}
From this, we can conclude the statement of the theorem.
\end{proof}

\section{Invariant Solutions}

\subsection{Invariant Solutions for Warped Product Manifolds}
First, we need to express explicitly the Ricci flow equations for the metric tensor~\eqref{eqro1}. We know that

\begin{eqnarray*}
\begin{aligned}
Ric_g(\tilde Y,\tilde Z)&=Ric_{g_B}(Y,Z)-\frac{m}{\varphi}\nabla_B^2 \varphi(Y,Z),\\
Ric_g(\tilde Y,\tilde V)&=0,\\
Ric_g(\tilde V,\tilde W)&=Ric_{g_F}(V,W)-\left(\frac{\Delta_B \varphi}{\varphi}+\frac{|\nabla_B \varphi|^2}{\varphi^2}(m-1)\right)g(\tilde V,\tilde W),
\end{aligned}
\end{eqnarray*}
for all $Y,Z\in\mathfrak{L}(B)$ and $V,W\in\mathfrak{L}(F)$, see Proposition $9.106$ in \cite{besse2007einstein} or~\cite{bishop1969manifolds}. On the other hand, consider $(p,q)\in B^n\times F^m$, where $\{\partial_1^B,\dots,\partial_n^B\}$ and $\{\partial_1^F,\dots,\partial_m^F\}$ are coordinate bases of $T_p B$ and $T_q F$ respectively. Then
\begin{eqnarray*}
\begin{aligned}
Ric_g(\tilde \partial^B_i,\tilde \partial^B_j)&=Ric_{g_B}(\partial^B_i,\partial^B_j)-\frac{m}{\varphi}\nabla_B^2 \varphi(\partial^B_i,\partial^B_j).
\end{aligned}
\end{eqnarray*}
From conformal theory we know that
\begin{equation*}
    Ric_B(\partial_i^B,\partial_j^B)=(n-2)\frac{\psi_{ij}}{\psi}+\left(\frac{\Delta\psi}{\psi}-(n-1)\frac{|\nabla \psi|^2}{\psi^2}\right)\delta_{ij},
\end{equation*}
and
\begin{equation*}
  \nabla_B^2 \varphi(\partial^B_i,\partial^B_j)=\varphi_{ij}+\frac{\psi_i\varphi_j+\psi_j\varphi_i}{\psi}-\frac{\langle\nabla\psi,\nabla\varphi\rangle}{\psi}\delta_{ij},
\end{equation*}
see Theorem 1.159 in \cite{besse2007einstein}. Thus
\begin{eqnarray*}
\begin{aligned}
    Ric_g(\tilde \partial^B_i,\tilde \partial^B_j)=&(n-2)\frac{\psi_{ij}}{\psi}+\left(\frac{\Delta\psi}{\psi}-(n-1)\frac{|\nabla \psi|^2}{\psi^2}\right)\delta_{ij}\\
    &-\frac{m}{\varphi}\Big(\varphi_{ij}+\frac{\psi_i\varphi_j+\psi_j\varphi_i}{\psi}-\frac{\langle \nabla\psi,\nabla\varphi\rangle}{\psi}\delta_{ij}\Big).
    \end{aligned}
\end{eqnarray*}
On the other hand
\begin{eqnarray*}
\begin{aligned}
Ric_g(\tilde\partial^F_i,\tilde\partial^F_j)&=Ric_{g_F}(\partial^F_i,\partial^F_j)-\left(\frac{\Delta_B \varphi}{\varphi}+\frac{|\nabla_B \varphi|^2}{\varphi^2}(m-1)\right)g(\tilde \partial^F_i,\tilde \partial^F_j)\\
&=\mu (g_{can})_{ij}-\left(\frac{\Delta_B \varphi}{\varphi}+\frac{|\nabla_B \varphi|^2}{\varphi^2}(m-1)\right)\varphi^2(g_{can})_{ij}.
\end{aligned}
\end{eqnarray*}
Again, from conformal theory, we have
\begin{equation*}
    Ric_g(\tilde\partial^F_i,\tilde\partial^F_j)=\left\{\mu-\psi^2\varphi^2\left(\frac{\Delta\varphi}{\varphi}-(n-2)\frac{\langle\nabla\varphi,\nabla\psi\rangle}{\psi\varphi}+(m-1)\frac{|\nabla\varphi|^2}{\varphi^2}\right)\right\}(g_{can})_{ij}.
\end{equation*}
Therefore
\begin{eqnarray*}
\begin{aligned}
    Ric_g=&\Big((n-2)\frac{\psi_{ij}}{\psi}+\left(\frac{\Delta\psi}{\psi}-(n-1)\frac{|\nabla \psi|^2}{\psi^2}\right)\delta_{ij}-\frac{m}{\varphi}\Big(\varphi_{ij}+\frac{\psi_i\varphi_j+\psi_j\varphi_i}{\psi}\\
    &-\frac{\langle \nabla\psi,\nabla\varphi\rangle}{\psi}\delta_{ij}\Big)\Big)dx_i\otimes dx_j+\Big(\mu-\varphi\psi^2\Big(\Delta\varphi-(n-2)\frac{\langle \nabla\varphi,\nabla\psi\rangle}{\psi}\Big)\\
    &-(m-1)\psi^2|\nabla\varphi|^2\Big)g_{can}.
    \end{aligned}
\end{eqnarray*}
That being done, the family of metrics~\eqref{eqro1} satisfies the Ricci flow equation if and only if the following system is satisfied
\begin{eqnarray}\label{sistricciconfor}
\begin{aligned}
\frac{\psi_t}{\psi^3}\delta_{ij}=&(n-2)\frac{\psi_{ij}}{\psi}+\left(\frac{\Delta\psi}{\psi}-(n-1)\frac{|\nabla \psi|^2}{\psi^2}\right)\delta_{ij}-\frac{m}{\varphi}\Big(\varphi_{ij}+\frac{\psi_i\varphi_j+\psi_j\varphi_i}{\psi}\\
&-\frac{\langle \nabla\psi,\nabla\varphi\rangle}{\psi}\delta_{ij}\Big),\\
-\varphi\varphi_t=&\mu-\varphi\psi^2\Big(\Delta\varphi-(n-2)\frac{\langle \nabla\varphi,\nabla\psi\rangle}{\psi}\Big)-(m-1)\psi^2|\nabla\varphi|^2,
\end{aligned}
\end{eqnarray}
for all $i,j\in{1,\dots,n}$.

Secondly, we use the symmetries found in subsection~\ref{subsec:wrpd1} to reduce the system of equations~\eqref{sistricciconfor}:

Note that $\{X_1, X_2\}$ is the one dimensional optimal system associated to the finite dimensional sub algebra of Theorem~\ref{teotor}, see Proposition~\ref{prop01}. So, we only need to consider, for instance,  the symmetry $X_2$ applying to it the one-parameter group of transformations generated by  $X_1$, in this case, $t\mapsto t+\varepsilon$. That is, we shall employ the symmetry
\begin{equation*}
    X=(1+2kt)\frac{\partial}{\partial t}-k\psi\frac{\partial}{\partial \psi}+k\varphi\frac{\partial}{\partial \varphi},
\end{equation*}
where we have chosen  $\varepsilon = \dfrac{1}{2k},\ k\neq 0$. 
Therefore, the invariant surface conditions are:
\begin{eqnarray}\label{torred1}
\begin{cases}
    k\psi+(1+2kt)\frac{\partial \psi}{\partial t} &= 0,\\
    k\varphi-(1+2kt)\frac{\partial\varphi}{\partial t} &= 0.
\end{cases}
\end{eqnarray}
Solving the system~\eqref{torred1} by the  method of characteristics we get
\begin{eqnarray*}
\begin{aligned}
    \psi &=& \frac{1}{\sqrt{1+2kt}}\,F(x),\\
    \varphi &=& \sqrt{1+2kt}\,G(x),
\end{aligned}
\end{eqnarray*}
where $F$ and $G$ are  smooth positive functions of $x$. Substituting these expressions of $\psi$ and $\phi$ to the system~\eqref{sistricciconfor}, we obtain
\begin{eqnarray}\label{siste1tor}
\begin{aligned}
-\frac{k}{F^2}\delta_{ij}=&(n-2)\frac{F_{ij}}{F}+\left(\frac{\Delta F}{F}-(n-1)\frac{|\nabla F|^2}{F^2}\right)\delta_{ij}-\frac{m}{G}\Big(G_{ij}+\frac{F_iG_j+F_jG_i}{F}\\
&-\frac{\langle \nabla F,\nabla G\rangle}{F}\delta_{ij}\Big),\\
-kG^2=&\mu-GF^2\Big(\Delta G-(n-2)\frac{\langle \nabla G,\nabla F\rangle}{F}\Big)-(m-1)F^2|\nabla G|^2,
\end{aligned}
\end{eqnarray}
for all $i,j\in\{1,\dots,n\}$. In view of the difficulty in solving the previous system, we are going to further reduce it by considering the case when the base has dimension $1$ and the fiber is the canonical sphere, that is the product manifold $M^1\times S^m$. By this reduction, the system~\eqref{siste1tor} becomes
\begin{eqnarray}\label{simplsis1}
\begin{aligned}
\frac{k}{F^2}&=\frac{m}{G}\left(G_{xx}+\frac{F_xG_x}{F}\right),\\
kG^2&=-\mu+\frac{k}{m}G^2+(m-1)F^2G_x^2,
\end{aligned}
\end{eqnarray}
where $\mu=m-1$
. Therefore, from the second one we have
\begin{equation*}
\left(FG_x\right)^2=\frac{k}{m}G^2 +1\implies G_x=\pm\frac{1}{F} \sqrt{\frac{k}{m}G^2 +1}.
\end{equation*}
Integrating it we get
\begin{equation*}
G(x)=\begin{cases}\pm\sqrt{\frac{m}{k}} \sinh \left(\sqrt{\frac{k}{m}}  \int_{x_0}^x \frac{d \tau}{F(\tau)} \right),& k>0,\\ \pm\sqrt{\frac{m}{-k}} \sin \left(\sqrt{\frac{-k}{m}} \int_{x_0}^x \frac{d \tau}{F(\tau)}\right),& k<0.\end{cases}
\end{equation*}
Observe that this solution satisfies identically the first equation of the system~\eqref{simplsis1}. Furthermore, the solution suggests the following parameterization:
\begin{equation*}
s(x)=\int_{x_0}^x \frac{d \tau}{F(\tau)}.
\end{equation*}
Therefore
\begin{equation*}
\frac{(1+2 k t)}{F^2(x)}  d x \otimes d x=(1+2 k t) d s \otimes d s .
\end{equation*}
So, having all the solutions of system~\eqref{simplsis1} at hand,  when $k\ne0$, we arrive at the following solutions of the Ricci flow for the warped space $M^1\times S^n$: 
\begin{eqnarray*}
\begin{aligned}
    g&=&(1+2k^2t)\left(ds\otimes ds+\frac{m}{k^2}
    \sinh^2\left(\sqrt{\frac{k^2}{m}} s\right) g_{can}\right),\\
    g&=&(1-2k^2t)\left(ds\otimes ds+\frac{m}{k^2}\sin^2\left(\sqrt{\frac{k^2}{m}} s\right) g_{can}\right),
\end{aligned}
\end{eqnarray*}
when $k\neq 0$. Note that the first metric yields the standard hyperbolic $m$-space, see~\cite{ma2011ricci}. Both of them are well known solutions in the bibliography found independently one from the other. This is an example of how symmetries can unify solutions found with different \emph{ad hoc} methods.

\subsection{Invariant Solutions for Doubly-Warped Product Manifolds}
Firstly, we need to express explicitly the Ricci flow equations. In this case, see~\cite{petersen2006riemannian}
\begin{eqnarray}\label{eq:DW}
\begin{aligned}
    \chi\chi_t&=\frac{p}{\varphi}\left(\varphi_{xx}-\frac{\varphi_x\chi_x}{\chi}\right)+\frac{q}{\psi}\left(\psi_{xx}-\frac{\psi_{x}\chi_{x}}{\chi}\right),\\
    \varphi\varphi_{t}&=-(p-1)+\frac{\varphi^2}{\chi^2}\left[\frac{1}{\varphi}\left(\varphi_{xx}-\frac{\varphi_x\chi_x}{\chi}\right)+(p-1)\left(\frac{\varphi_x}{\varphi}\right)^2+q\left(\frac{\varphi_x\psi_x}{\varphi\psi}\right)\right],\\
    \psi\psi_t&=-(q-1)+\frac{\psi^2}{\chi^2}\left[\frac{1}{\psi}\left(\psi_{xx}-\frac{\psi_x\chi_x}{\chi}\right)+(q-1)\left(\frac{\psi_x}{\psi}\right)^2+p\left(\frac{\varphi_x\psi_x}{\psi\varphi}\right)\right].
\end{aligned}
\end{eqnarray}
By the same argument as in the previous subsection, we will make use of the symmetry
\begin{equation*}
  X=(1+2kt)\frac{\partial}{\partial t}+k\chi \frac{\partial}{\partial \chi}+k\varphi\frac{\partial}{\partial \varphi}+k\psi\frac{\partial}{\partial \psi},
\end{equation*}
where $k\neq 0$. Therefore, the corresponding invariant surface conditions are
\begin{eqnarray}\label{redo}
\begin{aligned}
    Q_\chi=k\chi-(1+2tk)\chi_t&=0,\\
    Q_\varphi=k\varphi-(1+2kt)\varphi_t&=0,\\
    Q_\psi=k\psi-(1+2kt)\psi_t&=0.
\end{aligned}
\end{eqnarray}
Solving  system~\eqref{redo} we get
\begin{eqnarray*}
\begin{aligned}
    \chi(x,t)&=\sqrt{1+2tk}\,F(x),\\
    \varphi(x,t)&=\sqrt{1+2tk}\,G(x),\\
    \psi(x,t)&=\sqrt{1+2tk}\,H(x).\\
\end{aligned}
\end{eqnarray*}
Inserting these expressions of $\chi$, $\psi$ and $\varphi$ to the system~\eqref{eq:DW}, we arrive to the reduced system
\begin{eqnarray}\label{red02}
\begin{aligned}
    kF^2&=\frac{p}{G}\left(G''-\frac{G'F'}{F}\right)+\frac{q}{H}\left(H''-\frac{H'F'}{F}\right),\\
    kG^2&=-(p-1)+\left(\frac{G}{F}\right)^2\left(\frac{G''}{G}-\frac{G'F'}{FG}+(p-1)\left(\frac{G'}{G}\right)^2+q\left(\frac{G'H'}{GH}\right)\right),\\
    kH^2&=-(q-1)+\left(\frac{H}{F}\right)^2\left(\frac{H''}{H}-\frac{H'F'}{FH}+(q-1)\left(\frac{H'}{H}\right)^2+p\left(\frac{G'H'}{GH}\right)\right).
\end{aligned}
\end{eqnarray}
Taking inspiration from the procedure for obtaining the solutions for the case of the single warped product manifolds, we start by parametrising $G$ and $H$ by the arc length,
\begin{equation*}
   s(x)=\int_{x_0}^{x}F(\tau)d\tau.
\end{equation*}
The system~\eqref{red02} becomes
\begin{eqnarray}\label{red03}
\begin{aligned}
    k&=p\frac{G''(s)}{G(s)}+q\frac{H''(s)}{H(s)},\\
    k&=\frac{G''(s)}{G(s)}+(p-1)\left(\frac{G'(s)^2-1}{G(s)^2}\right)+q\frac{G'(s)H'(s)}{G(s)H(s)},\\
    k&=\frac{H''(s)}{H(s)}+(q-1)\left(\frac{H'(s)^2-1}{H(s)^2}\right)+p\frac{G'(s)H'(s)}{G(s)H(s)}.
\end{aligned}
\end{eqnarray}
Once more, by looking at the solutions found in the previous section, we start looking for particular solutions of the same form. Our investigation was fruitful, the following functions
\begin{eqnarray*}
\begin{aligned}
    G&=\pm\sqrt{\frac{p+q}{-k}}\sin\left(\sqrt{\frac{-k}{p+q}}s\right)&\text{and } H&=\pm\sqrt{\frac{p+q}{-k}}\cos\left(\sqrt{\frac{-k}{p+q}}s\right),\\
   G&=\pm\sqrt{\frac{(p-1)(p+q)}{-k(p+q-1)}}\sin\left(\sqrt{\frac{-k}{p+q}}s\right) &\text{and } H&=\pm\sqrt{\frac{(q-1)(p+q)}{-k(p+q-1)}}\sin\left(\sqrt{\frac{-k}{p+q}}s\right), 
\end{aligned}
\end{eqnarray*}
for $k<0$, while
\begin{eqnarray*}
\begin{aligned}
   G&=\pm\sqrt{\frac{(p-1)(p+q)}{k(p+q-1)}}\sinh\left(\sqrt{\frac{k}{p+q}}s\right) &\text{ and } H&=\pm\sqrt{\frac{(q-1)(p+q)}{k(p+q-1)}}\sinh\left(\sqrt{\frac{k}{p+q}}s\right), 
\end{aligned}
\end{eqnarray*}
for $k>0$, are indeed solutions of system~\eqref{red03}.

Hence, our study yields the following special solutions: 
\begin{eqnarray*}
\begin{aligned}
 g(s,t)=&(1-2k^2t)ds\otimes ds+(1-2k^2t)\frac{p+q}{k^2}\sin^2\left(\sqrt{\frac{k^2}{p+q}}s\right)g_{S^p _{can}}\\
    &+(1-2k^2t)\frac{p+q}{k^2}\cos^2\left(\sqrt{\frac{k^2}{p+q}}s\right)g_{S^q_{can}},\\
    g(s,t)=&(1-2k^2t)ds\otimes ds+(1-2k^2t)\frac{(p-1)(p+q)}{k^2(p+q-1)}\sin^2\left(\sqrt{\frac{k^2}{p+q}}s\right)g_{S^p _{can}}\\
    &+(1-2k^2t)\frac{(q-1)(p+q)}{k^2(p+q-1)}\sin^2\left(\sqrt{\frac{k^2}{p+q}}s\right)g_{S^q_{can}},\\
     g(s,t)=&(1+2k^2t)ds\otimes ds+(1+2k^2t)\frac{(p-1)(p+q)}{k^2(p+q-1)}\sinh^2\left(\sqrt{\frac{k^2}{p+q}}s\right)g_{S^p _{can}}\\
    &+(1+2k^2t)\frac{(q-1)(p+q)}{k^2(p+q-1)}\sinh^2\left(\sqrt{\frac{k^2}{p+q}}s\right)g_{S^q_{can}},
\end{aligned}
\end{eqnarray*}
where $k\ne0$. To the best of our knowledge, these solutions are new in the literature.

\section{Conclusion and Discussion}
In the present work, we determine the Lie point symmetries of the Ricci flow in arbitrary dimensions. By using their algebraic properties we are able to ``recycle'' them in order to expeditiously obtain the Lie point symmetries of the Ricci flow for a particular family of metrics. A task that can be very challenging if one starts each time from scratch, bearing in mind that even a four-dimensional metric yields a system of determining equations that involves almost two million PDE!

Unfortunately, classical symmetries, being a very broad notion that can be applied virtually to any kind of system of differential equations, have a very serious drawback; they usually will  not help us to obtain the kinds of solutions that we are looking for. And this is apparent in our case: The infinite dimensional sub algebra merely points to the tensorial nature of our system, while the finite one says, on one hand, that the system is autonomous and on the other hand usher us to consider separable solutions.  Solutions that are known to hold if and only if the initial manifold is an Einstein one. A fact that limits our choices regarding the inicial condition to be considered. 

One possible way out is by considering more ``exotic'' kinds of symmetries, like the non classical ones, see Section $9.3$ in \cite{hydon2000symmetry}. In a word, non classical symmetries are symmetries that are admitted only by specific families of solutions of a differential equation and not by the differential equation itself, as is the case with the classical symmetries.  Although our initial studies were promising none of the non classical symmetries found yielded non trivial solutions. This do not come as  a surprise since the systems of differential equations involved are now non linear. Therefore, a more exhausting classification must be carried over in order to obtain non classical symmetries that can yield solutions of the Ricci flow problem of some interest.

\section*{Acknowledgements}
Enrique F. L. Agila has been partially supported by Coordenação de Aperfeiçoamento de Pessoal de Nível Superior (CAPES),  Grant 001.

\end{document}